\newtheorem{theorem}{Theorem}[section]
\newtheorem{prop}{Proposition}[section]
\newtheorem{lemma}{Lemma}[section]
\newtheorem{remark}{Remark}[section]
\newtheorem{definition}{Definition}[section]
\newtheorem{note}{Note}
\newcommand{\R}{\mathbb{R}}
\newcommand{\Z}{\mathbb{Z}}
\newcommand{\N}{\mathbb{N}}
\newcommand{\C}{\mathbb{C}}
\newcommand{\dd}{\; \mathrm{d}}
\begin{document}
\title{The Goursat problem at the horizons for the Klein-Gordon equation on the De Sitter-Kerr metric}
\author{Pascal Millet}
\date{2019}
\maketitle

\begin{abstract}
The main topic is the Goursat problem at the horizons for the Klein-Gordon equation on the De Sitter-Kerr metric when the angular momentum per unit of mass of the black hole is small. We solve the Goursat problem for fixed angular momentum $n$ of the field (with the restriction that $n\neq 0$ in the case of a massless field).
\end{abstract}

\setcounter{tocdepth}{3}
\tableofcontents

\section{Introduction}
There has been a lot of activity concerning scattering theory for hyperbolic equations on black hole type spacetimes over the last years. There are two different ways to formulate what is called {\it asymptotic completeness} on these spacetimes. One formulation is in terms of wave operators which make the link between the dynamics one wants to study and a simplified dynamics. The asymptotic completeness can then be understood as saying that the long time dynamics of the complete system is well described by the simplified dynamics. In the massless case if one chooses as simplified dynamics a dynamics linked to transport along certain null geodesics in the given spacetime, then asymptotic completeness can be understood as an existence and uniqueness result in energy spaces for a characteristic Cauchy problem at infinity, see \cite{HaNi} or \cite{Ni} for details. The precise understanding of scattering properties of fields on a black hole spacetime is crucial to define quantum states on the given spacetime or to describe the Hawking effect in a rigorous way, see \cite{DaMoPi}, \cite{Ba}, \cite{Ha09}.  

The most important of these black hole spacetimes is the (De Sitter) Kerr spacetime which describes rotating black holes. The first asymptotic completeness result for a hyperbolic equation on the Kerr spacetime was obtained by H\"afner for non superradiant modes of the Klein-Gordon equation (see \cite{Ha03}), later Nicolas and H\"afner proved asymptotic completeness for the Dirac equation on the Kerr spacetime, see \cite{HaNi}. Asymptotic completeness for the Klein-Gordon equation on the De Sitter-Kerr black hole for fixed angular momentum of the field was established by G\'erard, Georgescu and H\"afner (see \cite{GGH}) and for the wave equation on the Kerr black hole by Dafermos-Rodnianski-Shlapentokh Rothman (without restriction of the angular momentum of the field), see \cite{DRSR}. The main difference between the Dirac and the wave or Klein-Gordon equation is the existence of superradiance for the latter, meaning that there doesn't exist any positive conserved quantity. Note that superradiance is also present for the charged Klein-Gordon equation on the De Sitter-Reissner-Nordstr\"om spacetime, for which asymptotic completeness has been obtained recently by Besset when the charge product (charge of the black hole and charge of the field) is small with respect to the mass of the field, see \cite{Be}. When the mass of the field is small with respect to the charge product, exponentially growing finite energy solutions can exist and asymptotic completeness does not hold in that case, see \cite{besset2020existence} for details.

The results of \cite{GGH} are formulated in terms of wave operators and the aim of the present paper is to give a geometric interpretation of the results in \cite{GGH} in terms of an existence and uniqueness result in energy spaces for the characteristic Cauchy problem at infinity. Because of the existence of two horizons the result holds for the Klein-Gordon equation. A similar result would certainly not hold for the Klein-Gordon equation on the Kerr spacetime as part of the energy could escape to future timelike infinity.  
\subsection*{Acknowledgment.}
I am very grateful to my phD advisor Dietrich Häfner for numerous discussions which have contributed a lot to the present work. I also thank Nicolas Besset for punctual but useful exchanges. 

\section{Main result}
\subsection{Kerr De Sitter space-time}
We recall the expression of the De Sitter-Kerr metric in Boyer-Lindquist coordinates for a cosmological constant $\Lambda>0$, a mass parameter $M>0$ and an angular momentum per unit of mass $a$:
\begin{align}
g := & \frac{\Delta_r-a^2\sin^2(\theta)\Delta_{\theta}}{\lambda^2 \rho^2} \dd t^2 + \frac{2a \sin^2(\theta)((r^2+a^2)\Delta_{\theta} -\Delta_r)}{\lambda^2 \rho^2}\dd t \dd \phi \notag \\
& - \frac{\rho^2}{\Delta_r}\dd r^2 - \frac{\rho^2}{\Delta_{\theta}}\dd \theta^2 - \frac{\sin^2(\theta)\sigma^2}{\lambda^2\rho^2}\dd \phi^2 \label{metric}
\end{align}
with the notations
\begin{align*}
\rho^2 &:= r^2 + a^2 \cos^2 \theta \\
\Delta_r &:= (1-\frac{1}{3}\Lambda r^2)(r^2 + a^2)-2Mr \\
\Delta_{\theta} &:= 1 + \frac{1}{3}\Lambda a^2 \cos^2 \theta \\
\sigma^2 &:= (r^2 + a^2)^2\Delta_{\theta} - a^2 \Delta_r \sin^2 \theta \\
\lambda &:= 1 + \frac{1}{3}\Lambda a^2
\end{align*}
We also name each coefficient:
\[
g = g_{t,t}\dd t^2 + 2g_{t, \phi}\dd t \dd \phi+ g_{r,r}\dd r^2 + g_{\theta, \theta} \dd \theta^2 + g_{\phi, \phi} \dd \phi^2
\]
We assume that $\Delta_r>0$ for $r\in(r_-, r_+)$ where $r_-$ and $r_+$ are positive simple roots. (For $a = 0$, this is true when $9\Lambda M^2 < 1$; it remains true if $a$ is small enough.) As a consequence, if we define $\alpha(r):= \frac{\Delta_r}{(r-r_-)(r_+-r)}$, we get that $\alpha(r)^{\pm 1} \in C^{\infty}([r_-, r_+], (0, +\infty))$. 
The manifold $\mathcal{M}=\mathbb{R}_t\times(r_-, r_+)\times \mathbb{S}^2$ endowed with the metric \eqref{metric} is what we call the Kerr De Sitter space-time. It describes the space-time outside the horizon of a spinning black-hole in an expanding universe ($\Lambda>0$).
 We introduce a Regge Wheeler coordinate defined (up to a constant that we fix arbitrarily) by the condition 
\begin{equation}
\frac{dx}{dr} = \lambda \frac{r^2+a^2}{\Delta_r}
\end{equation}

\begin{note}
Note that $x\rightarrow \pm \infty$ when $r\rightarrow r_{\pm}$. Moreover, we have (see for example the beginning of section (9.1) of \cite{GGH})
\[
|r_+-r| \leq e^{-\kappa_+ x}
\]
and 
\[
|r - r_- | \leq e^{\kappa_- x}
\]
Where $\kappa_+>0$ and $\kappa_->0$ are the surface gravities of the horizons.
\end{note}

We normalize the principal null vector fields (see \cite{KDS} equation (25) and \cite{borthwick} in subsection 4.1) so that they are compatible with the time foliation. 
\begin{align*}
v_{+}&= \partial_t + \partial_x + \frac{a}{a^{2}+r^{2}}\partial_\phi \\
v_{-}&= \partial_t - \partial_x + \frac{a}{a^{2}+r^{2}}\partial_\phi
\end{align*}
The main results that we use from \cite{GGH} are stated for a fixed angular momentum with respect to the axis of rotation of the black hole. In what follows, we fix the mode $n\in \Z$ and we consider the operators induced on $\mathcal{Y}^n:=ker (D_{\phi} - n)\subset L^2(\R \times \mathbb{S}^2)$, ($D_{\phi}$ is considered as an unbounded operator on $L^2(\R \times \mathbb{S}^2)$ with domain $\left\{ u\in L^2(\R\times \mathbb{S}^2): D_{\phi}u\in L^2(\R\times\mathbb{S}^2)\right\}$ ).
We also define $Y^n := ker(D_{\phi}-n)\subset L^2(\mathbb{S}^2)$ where $D_{\phi}$ is considered as an operator on $L^2(\mathbb{S}^2)$. Remark that $Y^n$ and $\mathcal{Y}^n$ endowed with the natural $L^2$ norm are separable Hilbert spaces.

We introduce the $n$ dependent differential operator which coincide with the spatial part of the vector fields $v_+$ and $v_-$ on $\mathcal{Y}^n$:
\begin{align*}
w_{+}&=\partial_x + \frac{ian}{a^{2} + r^{2}} \\
w_{-}&=-\partial_x + \frac{ian}{a^{2}+ r^{2}}
\end{align*}

\label{Constr}
We now define the horizons. $\mathcal{M}$ is not maximally extended. In other words we can find a Lorentzian manifold $\mathcal{M}'$ solution to the vacuum Einstein equation with cosmological constant $\Lambda$ such that $\mathcal{M}$ is isometric to a strict submanifold of $\mathcal{M}'$.
\begin{definition}
We fix once and for all $r_0\in (r_-,r_+)$.
We define two functions of $r$:
\begin{align*}
T(r) &= \int_{r_0}^r\frac{\lambda (a^2+r^2)}{\Delta_r} \dd r \\
A(r) &= \int_{r_0}^r\frac{\lambda a}{\Delta_r} \dd r
\end{align*}
\end{definition}
\begin{remark}
 $T$ and $A$ are increasing homeomorphisms between $(r_-, r_+)$ and $\mathbb{R}$.
\end{remark}

\begin{definition}
We define two changes of coordinates 
\begin{align*}
\Psi^{out}(t,r,\theta, \phi) &= (t-T(r), r, \theta, \phi - A(r)) =: ({}^*t, r, \theta, {}^*\phi) \text{   ${}^*$Kerr coordinates}\\
\Psi^{in}(t,r,\theta, \phi) &= (t+T(r), r, \theta, \phi + A(r)) =: (t^*, r, \theta, \phi^*) \text{   Kerr${}^*$ coordinates}
\end{align*}
\end{definition}
\begin{remark}
Note that the $\phi$ coordinates is considered as valued in $\mathbb{S}^1$, so for $k\in \mathbb{Z}$, $\phi = \phi + 2k\pi$
\end{remark}
In terms of these new coordinates, we can extend analytically the metric to a bigger manifold (to see this construction in more details, we refer to \cite{borthwick} section 4). In particular we can add the following null hypersurfaces (called horizons) to the space-time:
\begin{align}
\mathfrak{H}^{future}_+ &:= \mathbb{R}_{{}^*t}\times {r_+}\times \mathbb{S}^2\\
\mathfrak{H}^{past}_+ &:= \mathbb{R}_{t^*}\times {r_+}\times \mathbb{S}^2\\
\mathfrak{H}^{future}_- &:= \mathbb{R}_{t^*}\times {r_-}\times \mathbb{S}^2 \\
\mathfrak{H}^{past}_- &:= \mathbb{R}_{{}^*t}\times {r_-}\times \mathbb{S}^2
\end{align}
$\mathfrak{H}^{future}_-$ (resp. $\mathfrak{H}^{past}_-$) is the future (resp. past) horizon of the black hole. $\mathfrak{H}^{future}_+$ (resp. $\mathfrak{H}^{past}$) is the future (resp. past) Cauchy horizon. 
We also define $\bar{\mathcal{M}}:= \mathcal{M}\cup \mathfrak{H}^{future}_+ \cup \mathfrak{H}^{past}_+ \cup \mathfrak{H}^{future}_- \cup \mathfrak{H}^{past}_- $. We refer to \cite{borthwick} for the construction of the maximal extension of Kerr De Sitter black hole.

\begin{figure}[!h]
\centering
\includegraphics[scale = 0.15]{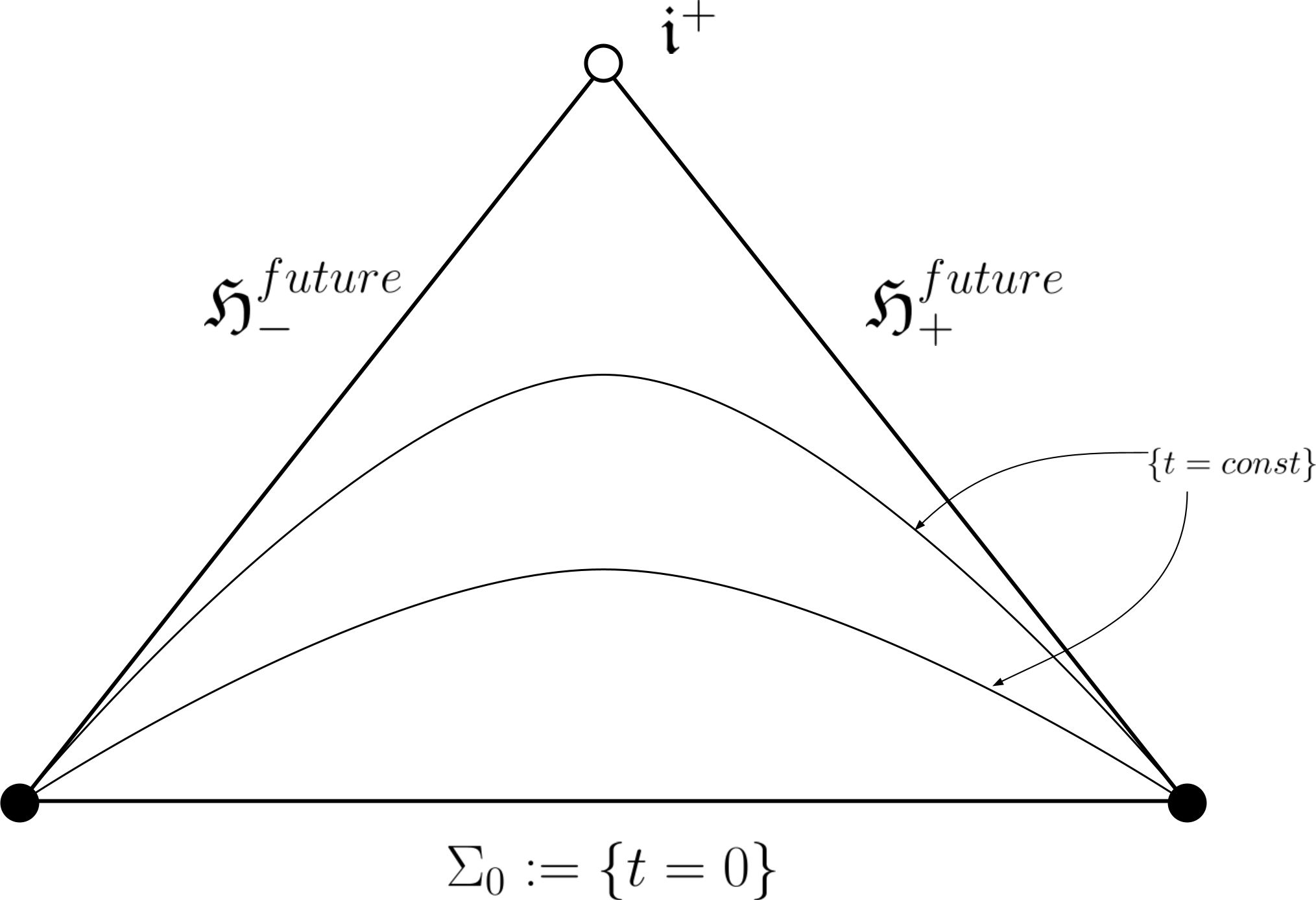}
\caption{\label{penroseDiagram} Carter-Penrose diagram of the part $\left\{t\geq 0\right\}$ of the Kerr De Sitter space-time \\
$\mathfrak{i}^+$ corresponds to the future timelike infinity (it is not a point of the spacetime)}
\end{figure}

\subsection{The main theorem}

The main result of the present paper is a formulation of asymptotic completeness for the Klein-Gordon equation $(\square + m^2)u=0$ on the Kerr De Sitter space-time (for sufficiently small angular momentum $a$ of the black hole) in term of a characteristic Cauchy problem on the future horizons.

\label{mainRes}
We consider initial data given as elements in some energy space over the surface $\Sigma_0:=\left\{t=0\right\}$ (diffeomorph to $(r_-, r_+)\times \mathbb{S}^2$). As in \cite{GGH}, we have to consider initial data in the kernel of $D_\phi - n$ where $D_\phi = \frac{1}{i}\partial_\phi$ and $n\in \mathbb{Z}$. Keep in mind that our results are not uniform with respect to $n$.
To define the energy, we introduce the stress-energy tensor associated with the Klein Gordon equation:
\[
T(u)_{a,b} = \nabla_a u \nabla_b u - \frac{1}{2}g_{a,b}\left<\nabla u, \nabla u\right> + \frac{1}{2}g_{a,b}m^2u^2
\]
It is divergence free if $u$ is a solution of the Klein-Gordon equation.The usual energy current one-form is obtained by contracting $T(u)$ with $\partial_t$ (Killing vector field). However, because $\partial_t$ is not globally timelike, the flux of this energy form through $\Sigma_0$ can be negative. Therefore, we replace $\partial_t$ by $X := \frac{\nabla t}{g(\nabla t, \nabla t)}$, which is timelike and continuous on $\bar{\mathcal{M}}$ (but not Killing). The flux of this modified energy form through $\Sigma_0$ is used to define an energy space $\dot{\mathcal{E}}^n$ (see section \ref{energySpace} for an explicit definition). Similarly the flux of the energy form through the future horizons gives us energy spaces $\mathcal{E}^n_{\mathfrak{H}_+}$ and $\mathcal{E}^n_{\mathfrak{H}_-}$ (see section \ref{energyHorizon} for an explicit definition).
If the initial Cauchy data $u \in \dot{\mathcal{E}}^n$ are smooth and compactly supported, Leray's theorem gives us the existence and uniqueness of a smooth (on $\bar{\mathcal{M}}$) solution to the Klein-Gordon equation. We can define its trace on $\mathfrak{H}^{future}_\pm$ : $\mathcal{T}_\pm u\in C^{\infty}(\mathfrak{H}^{future}_\pm)$. Our main theorem is the following:
\begin{theorem}
We assume that $n\neq 0$ or $m^2>0$.
There exists $C>0$ such that for all $u\in (C^{\infty}_0(\Sigma_0)\cap \mathcal{Y}^n)^2$,
\[
\left\|u\right\|_{\dot{\mathcal{E}}^n}\leq C\left(\left\|\mathcal{T}_-u\right\|_{\mathcal{E}^n_{\mathfrak{H}_-}}+\left\|\mathcal{T}_+u\right\|_{\mathcal{E}^n_{\mathfrak{H}_+}}\right)
\]
Then the application
\[
\begin{cases}
(C^{\infty}_0(\Sigma_0)\cap \mathcal{Y}^n)^2 \rightarrow C^{\infty}(\mathfrak{H}^{future}_-)\times C^{\infty}(\mathfrak{H}^{future}_+) \\
u \mapsto (\mathcal{T}_- u, \mathcal{T}_+ u)
\end{cases}
\]

has a unique continuous extension from $\dot{\mathcal{E}}^n$ to $\mathcal{E}^n_{\mathfrak{H}_-} \oplus \mathcal{E}^n_{\mathfrak{H}_+}$. Moreover this extension is a homeomorphism.
\end{theorem}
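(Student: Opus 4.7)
The plan is to combine a direct energy estimate (giving continuity of the trace map) with the asymptotic completeness result of \cite{GGH} (giving the reverse estimate and injectivity), and then to solve a Goursat problem by a H\"ormander-style approximation argument (giving surjectivity). This parallels the strategy used for the Dirac and wave equations in \cite{HaNi}, \cite{Ni}, adapted to the superradiant Klein--Gordon setting.

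\textbf{Step 1: direct energy estimate.} For smooth compactly supported $u\in \mathcal{Y}^n$ one integrates the divergence of the current $J_b = T(u)_{a,b}X^a$ over the region of $\bar{\mathcal{M}}$ bounded by $\Sigma_0$ and a family of characteristic hypersurfaces approximating $\mathfrak{H}^{future}_-\cup\mathfrak{H}^{future}_+$. Since $X$ is timelike and continuous on $\bar{\mathcal{M}}$, both the Cauchy flux and the horizon fluxes control positive quadratic forms equivalent to the intrinsic norms $\|\cdot\|_{\dot{\mathcal{E}}^n}$ and $\|\cdot\|_{\mathcal{E}^n_{\mathfrak{H}_\pm}}$. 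The bulk term involves the deformation tensor of $X$ (since $X$ is not Killing) and the mass; on any compact ${}^*t$-, $t^*$-interval reaching the horizons a Gr\"onwall argument in the Kerr${}^*$ / ${}^*$Kerr time closes the estimate, yielding the continuity bound
\[
\|\mathcal{T}_- u\|_{\mathcal{E}^n_{\mathfrak{H}_-}} + \|\mathcal{T}_+ u\|_{\mathcal{E}^n_{\mathfrak{H}_+}} \leq C \|u\|_{\dot{\mathcal{E}}^n}
\]
and hence a continuous extension of $\mathcal{T}_\pm$ to all of $\dot{\mathcal{E}}^n$.

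\textbf{Step 2: the reverse inequality via \cite{GGH}.} The asymptotic completeness proved in \cite{GGH} furnishes bounded and invertible wave operators on $\mathcal{Y}^n$ comparing the full Klein--Gordon dynamics with a simpler asymptotic dynamics, namely transport generated by $w_\pm$ near $r=r_\pm$. The point is to identify the asymptotic profiles delivered by those wave operators with the geometric traces $\mathcal{T}_\pm u$. Along the integral curves of $v_\pm$, the coordinates ${}^*t$ and $t^*$ parametrise the generators of $\mathfrak{H}^{future}_\pm$, so the asymptotic profiles are exactly restrictions to the corresponding horizon once read in the correct ${}^*$Kerr / Kerr${}^*$ coordinates. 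Checking that the functional setting of \cite{GGH} matches the energy norms $\|\cdot\|_{\mathcal{E}^n_{\mathfrak{H}_\pm}}$ defined via the stress-energy flux (and using the hypothesis $n\neq 0$ or $m^2>0$ to avoid the zero-frequency obstruction), the invertibility of the wave operators immediately translates into the stated bound
\[
\|u\|_{\dot{\mathcal{E}}^n}\leq C\bigl(\|\mathcal{T}_-u\|_{\mathcal{E}^n_{\mathfrak{H}_-}}+\|\mathcal{T}_+u\|_{\mathcal{E}^n_{\mathfrak{H}_+}}\bigr),
\]
giving injectivity of the extended trace map with closed range.

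\textbf{Step 3: surjectivity and homeomorphism.} To invert the trace map I would use a H\"ormander-type Goursat construction: given smooth data $(\xi_-,\xi_+)$ on $\mathfrak{H}^{future}_- \cup \mathfrak{H}^{future}_+$ supported away from future timelike infinity $\mathfrak{i}^+$, one embeds the horizons into slightly larger characteristic hypersurfaces inside the analytic extension of $\mathcal{M}$ (as in \cite{borthwick}), extends the data by zero, and reduces the Goursat problem to a standard Cauchy problem whose solution restricts to a Klein--Gordon solution with the prescribed traces. Density of such data in $\mathcal{E}^n_{\mathfrak{H}_-}\oplus\mathcal{E}^n_{\mathfrak{H}_+}$ combined with the estimate of Step 1 (to pass to the limit) provides a preimage for arbitrary horizon data. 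Together with Step 2 this shows that the extended trace map is a bounded bijection with bounded inverse, hence a homeomorphism.

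\textbf{Main obstacle.} The hardest step is Step 2: making the identification of the abstract wave-operator data of \cite{GGH}, which is phrased in weighted $L^2$-spaces adapted to separation of variables in $(x,\theta)$, with the intrinsic horizon energy spaces $\mathcal{E}^n_{\mathfrak{H}_\pm}$ defined by contracting the stress-energy tensor with $X$. One has to control the discrepancy between the comparison Hamiltonian near $r_\pm$ and the true dynamics at the level of energy norms (not just $L^2$), and to account precisely for the role of the excluded case $n=0$, $m^2=0$, where a resonant zero mode breaks the equivalence. Step 1 is mostly a careful but routine energy computation with the non-Killing vector field $X$; Step 3 is technical but follows a well-established template once Step 2 is in place.
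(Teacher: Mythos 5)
Your overall architecture (trace $=$ scattering profile, two-sided estimate from asymptotic completeness, surjectivity by solving backwards) is the right shape, but two of your three steps hide the actual content of the proof, and one of them would fail as written.

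\textbf{Step 1 does not close as described.} The region of $\bar{\mathcal{M}}$ between $\Sigma_0$ and $\mathfrak{H}^{future}_-\cup\mathfrak{H}^{future}_+$ is unbounded in time (the horizons are reached only as $t\to+\infty$; they accumulate at $\mathfrak{i}^+$), so the bulk term produced by the deformation tensor of the non-Killing field $X$ is integrated over an infinite time interval and a Gr\"onwall argument gives at best an exponentially growing, hence useless, constant. In the superradiant setting there is no positive conserved quantity, and uniform-in-time boundedness of the energy is itself the deep result of \cite{GGH} (obtained by spectral/Mourre methods, not by a divergence identity). The paper never performs your Step 1: the continuity of $\mathcal{T}_\pm$ is obtained \emph{a posteriori} from the identity $\mathcal{T}=\mathcal{F}^{-1}\Omega$ together with the uniform boundedness of $e^{-it\dot{H}_{T,\pm}}i_\pm^2e^{it\dot{H}^n}$ (lemma \ref{boundedness3}), which rests on theorem 12.1 of \cite{GGH}.

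\textbf{Step 2 assumes away the main construction.} The wave operators of \cite{GGH} compare $\dot H$ with the separable dynamics $\dot H_{\pm\infty}$, whose profiles are \emph{not} transport along principal null geodesics and therefore cannot be read off as horizon traces. Because $w_+$ and $w_-$ do not commute (unlike in Schwarzschild), the paper has to introduce a second layer of comparison: the genuine transport dynamics $H_{T,\pm}$ built from the modified operators $\tilde w_\mp$, the corresponding wave operators $W_{T,\pm}$, $\Omega_{T,\pm}$, the left/right splitting $\dot{\mathcal{E}}^n_{T,\pm}=\mathcal{E}^l_{T,\pm}\oplus\mathcal{E}^r_{T,\pm}$, the propagation argument showing $Ran(\Omega_{T,\pm})\subset\mathcal{E}^{r/l}_{T,\pm}$, and above all the coercive estimate $\|u\|_{\dot{\mathcal{E}}}\leq C(\|\Omega_{T,+}u\|+\|\Omega_{T,-}u\|)$. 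That last estimate is exactly the inequality in the theorem and it does \emph{not} follow ``immediately'' from invertibility in \cite{GGH}: the cutoffs $i_\pm$ discard data in the overlap region, and one must show no energy survives there asymptotically (lemmas \ref{propEstGen}, \ref{propFirstComp}, \ref{finIneq}, \ref{SRanOmega}). Finally, identifying the transport profile with the geometric trace is itself a nontrivial computation (the pointwise limit of $e^{tW_+}i_+e^{it\dot H}u$ in ${}^*$Kerr coordinates in the proof of theorem \ref{mainTheo}), not a matter of ``reading in the correct coordinates.'' Your own ``main obstacle'' paragraph correctly flags this as the hard point, but the proposal offers no mechanism for it.

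\textbf{Step 3 is a genuinely different route, with a slip.} The paper obtains surjectivity not by a H\"ormander-type Goursat construction but from the identity $\Omega W=Id_{\mathcal{P}}$, i.e.\ from the existence of the \emph{direct} wave operators $W_{T,\pm}$ applied to outgoing/incoming profiles; this buys an explicit inverse ($W\mathcal{F}$) and avoids extending the spacetime across two disjoint null hypersurfaces. Your approximation argument could plausibly be made to work, but note that to pass to the limit in the preimages you need the \emph{reverse} estimate of Step 2 (so that the approximating solutions form a Cauchy sequence in $\dot{\mathcal{E}}^n$), not the forward estimate of Step 1 as you wrote; Step 1 is only needed afterwards to check that the limit has the prescribed trace.
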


We will even be more precise and give an explicit description of the trace operator in terms of wave operators (see theorem \ref{mainTheo}).

\subsection{Organization of the paper}
The main theorem is a consequence of the description in terms of wave operators associated with well chosen comparison dynamics. In section \ref{FuncSet}, we introduce the functional setting, in particular the Klein-Gordon dynamics, the comparison dynamics and a Kirchhoff formula. We construct two comparison dynamics (one for each horizon) and therefore two wave operators $W_{T,+}$ and $W_{T,-}$ and two corresponding inverse wave operators $\Omega_{T,+}$ and $\Omega_{T,-}$. We then glue them together to obtain a global wave operator $W$ and a global inverse wave operator $\Omega$. In section \ref{properties}, we prove that the two global operators are inverse to each other. Finally, we make a connection between the trace operator and the global inverse wave operator and we prove the main theorem \ref{mainTheo}.

\section{Functional setting}
\label{FuncSet}
\subsection{Notations for the Klein-Gordon dynamics}
We recall the notations introduced in the sections 11 and 12 of the article \cite{GGH}. 
We are interested in the Klein-Gordon equation on the De Sitter-Kerr space-time $(\square_g + m^2)u = 0$.
We rewrite it in Boyer-Lindquist coordinates, after multiplication by $\frac{\rho^2\Delta_r\Delta_\theta}{\lambda^2\sigma^2}$ (to have the coefficient in front of $\partial_t^2$ equal to 1):
\begin{align*}
\left(\vphantom{\partial_t^2 - 2\frac{a(\Delta_r - (r^2 + a^2)\Delta_\theta)}{\sigma^2}\partial_\phi\partial_t - \frac{(\Delta_r - a^2\sin^2 \theta \Delta_\theta)}{\sin^2\theta \sigma^2}\partial_\phi^2-\frac{\Delta_r\Delta_\theta}{\lambda^2\sigma^2}\partial_r\Delta_r\partial_r - \frac{\Delta_r\Delta_\theta}{\lambda^2\sin\theta\sigma^2}\partial_\theta \sin\theta\partial_\theta+ \frac{\rho^2\Delta_r\Delta_\theta}{\lambda^2\sigma^2}m^2}\right. \partial_t^2 - 2\frac{a(\Delta_r - (r^2 + a^2)\Delta_\theta)}{\sigma^2}\partial_\phi\partial_t - \frac{(\Delta_r - a^2\sin^2 \theta \Delta_\theta)}{\sin^2\theta \sigma^2}\partial_\phi^2&\notag\\
-\frac{\Delta_r\Delta_\theta}{\lambda^2\sigma^2}\partial_r\Delta_r\partial_r - \frac{\Delta_r\Delta_\theta}{\lambda^2\sin\theta\sigma^2}\partial_\theta \sin\theta \Delta_\theta\partial_\theta+ \frac{\rho^2\Delta_r\Delta_\theta}{\lambda^2\sigma^2}m^2&\left. \vphantom{\partial_t^2 - 2\frac{a(\partial_r - (r^2 + a^2)\Delta_\theta)}{\sigma^2}\partial_\phi\partial_t - \frac{(\Delta_r - a^2\sin^2 \theta \Delta_\theta)}{\sin^2\theta \sigma^2}\partial_\phi^2
-\frac{\Delta_r\Delta_\theta}{\lambda^2\sigma^2}\partial_r\Delta_r\partial_r - \frac{\Delta_r\Delta_\theta}{\lambda^2\sin\theta\sigma^2}\partial_\theta \sin\theta\partial_\theta+ \frac{\rho^2\Delta_r\Delta_\theta}{\lambda^2\sigma^2}m^2}\right)u= 0
\end{align*}
To simplify the analysis, we use the unitary transform 
\[U:L^2\left(\Sigma_0, \frac{\sigma^2}{\Delta_r\Delta_\theta}\dd r \dd \omega \right) \rightarrow L^2(\Sigma_0, \dd x \dd \omega), \text{     }u\mapsto \frac{\sigma}{\sqrt{\lambda(r^2+a^2)\Delta_\theta}}u
\]
We write the equation on $v= Uu$:
\begin{align}
\left(\vphantom{\partial_t^2 - 2\frac{a(\Delta_r - (r^2 + a^2)\Delta_\theta)}{\sigma^2}\partial_\phi\partial_t - \frac{\Delta_r - a^2\sin^2\theta}{\sin^2\theta \sigma^2}\partial_\phi^2 -\frac{\sqrt{(r^2+a^2)\Delta_\theta}}{\sigma}\partial_x(r^2+a^2)\partial_x\frac{\sqrt{(r^2+a^2)\Delta_\theta}}{\sigma}-\frac{\sqrt{\Delta_r\Delta_\theta}}{\lambda\sin\theta\sigma}\partial_\theta \Delta_\theta\partial_\theta \frac{\sqrt{\Delta_r\Delta_\theta}}{\lambda\sigma} + \frac{\rho^2\Delta_r\Delta_\theta}{\lambda^2\sigma^2}m^2}\right.\partial_t^2 - 2\frac{a(\Delta_r - (r^2 + a^2)\Delta_\theta)}{\sigma^2}\partial_\phi\partial_t - \frac{\Delta_r - a^2\sin^2\theta\Delta_\theta}{\sin^2\theta \sigma^2}\partial_\phi^2 &\notag\\
-\frac{\sqrt{(r^2+a^2)\Delta_\theta}}{\sigma}\partial_x(r^2+a^2)\partial_x\frac{\sqrt{(r^2+a^2)\Delta_\theta}}{\sigma}& \notag\\
-\frac{\sqrt{\Delta_r\Delta_\theta}}{\lambda\sin\theta\sigma}\partial_\theta \sin\theta \Delta_\theta\partial_\theta \frac{\sqrt{\Delta_r\Delta_\theta}}{\lambda\sigma} + \frac{\rho^2\Delta_r\Delta_\theta}{\lambda^2\sigma^2}m^2&\left. \vphantom{\partial_t^2 - 2\frac{a(\Delta_r - (r^2 + a^2)\Delta_\theta)}{\sigma^2}\partial_\phi\partial_t - \frac{\Delta_r - a^2\sin^2\theta}{\sin^2\theta \sigma^2}\partial_\phi^2 -\frac{\sqrt{(r^2+a^2)\Delta_\theta}}{\sigma}\partial_x(r^2+a^2)\partial_x\frac{\sqrt{(r^2+a^2)\Delta_\theta}}{\sigma}-\frac{\sqrt{\Delta_r\Delta_\theta}}{\lambda\sin\theta\sigma}\partial_\theta \Delta_\theta\partial_\theta \frac{\sqrt{\Delta_r\Delta_\theta}}{\lambda\sigma} + \frac{\rho^2\Delta_r\Delta_\theta}{\lambda^2\sigma^2}m^2}\right)v=0 \label{KGFinale}
\end{align}
We define the operators $h$ and $k$ such that the equation becomes $(\partial_t^2 - 2ik\partial_t + h)v=0$ and $h_0:= h+k^2$. 
Note that we can then reformulate \eqref{KGFinale} as a first order in time system: \[\partial_t \begin{pmatrix}v_0 \\ v_1 \end{pmatrix} = i\begin{pmatrix}0 & 1 \\ h & 2k\end{pmatrix} \begin{pmatrix} v_0 \\ v_1 \end{pmatrix}\]. 

\label{energySpace}
We recall that $h_0$ is a non negative injective selfadjoint operator on $L^2(\R_x\times\mathbb{S}^2)$ with domain $\left\{ u\in L^2(\R_x\times\mathbb{S}^2): h_0u\in L^2(\R_x\times\mathbb{S}^2) \right\}$ where the derivatives are taken in the distribution sense. We can now define the inhomogeneous energy space $\mathcal{E}:= Dom\left(h^{\frac{1}{2}}_0\right)\oplus L^2(\R_x\times\mathbb{S}^2)$ endowed with the norm $\left\|u\right\|^2_{\mathcal{E}}:= \left<(h_0+1) u_0, u_0\right> + \left\|u_1 - ku_0\right\|^2_{L^2}$. We also define the homogeneous energy space $\dot{\mathcal{E}}$ as the completion of $\mathcal{E}$ for the norm $\left\|u\right\|^2_{\dot{\mathcal{E}}}:= \left<h_0 u_0, u_0\right> + \left\|u_1 - ku_0\right\|^2_{L^2}$.
\begin{remark}
Let $u\in \left(C^{\infty}_0(\Sigma_0)\cap \mathcal{Y}^n\right)^2$, we denote by $f(t)$ the solution of the Cauchy problem for the Klein-Gordon equation with initial data $f(0) = u_0$ and $D_t f(0)= u_1$.
A tedious computation shows that $\frac{1}{2}\left\|u\right\|^2_{\dot{\mathcal{E}}}$ corresponds to the flux of the contraction between the stress energy tensor $T(f)$ and the vector field $X$ through $\Sigma_0$ as explained in section \ref{mainRes}
\end{remark}

According to section 3 of \cite{GGH}, the operator
\begin{equation}
H = \begin{pmatrix}0 & 1\\  h & 2k\end{pmatrix}
\end{equation} acting on $(C^{\infty}_0(\R \times \mathbb{S}^2))^2$ admits a closure (still called $H$) as an unbounded operator on $\mathcal{E}$ and a closure $\dot{H}$ as an unbounded operator on $\dot{\mathcal{E}}$. Moreover, $iH$ (resp. $i\dot{H}$) is the generator of a $C^0$-group on $\mathcal{E}$ (resp. on $\dot{\mathcal{E}}$). We call it $e^{itH}$ (resp. $e^{it\dot{H}}$). $e^{it\dot{H}}$ coincides with the continuous extension of $e^{itH}$ to $\dot{\mathcal{E}}$.
We denote by $H^n$ (resp. $\dot{H}^n$) the operator induced on $\mathcal{E}^n:= \mathcal{E}\cap (\mathcal{Y}^n)^2$ (resp. on $\dot{\mathcal{E}}^n$ the completion of $\mathcal{E}^n$ for the norm $\left\|.\right\|_{\dot{\mathcal{E}}}$).

We also define\footnote{The definition of $P$ in \cite{GGH} (section 12.2) is with a $\lambda^2$, but the right definition to get a smooth operator on the sphere is with a $\lambda$} the selfadjoint operator $P$ on $L^2(\mathbb{S}^2)$
\[ P = -\frac{\lambda}{\sin^2\theta}\partial_{\phi}^2 - \frac{1}{\sin\theta}\partial_{\theta} \sin \theta \Delta_{\theta}\partial_{\theta} \]
with domain 
\[Dom(P) = \left\{ u \in L^2(\mathbb{S}^2) : Pu \in L^2(\mathbb{S}^2) \right\} \]
where $Pu$ in this definition is understood as a distribution. With this definition, $P$ is a smooth elliptic differential operator selfadjoint on $L^2(\mathbb{S}^2)$. As a consequence, its spectrum is purely punctual (eigenvalues of finite multiplicity) and its eigenfunctions are smooth.
We denote by $(\lambda_q)_{q\in \N}$ the eigenvalues of $P$ and by $(Z_q)$ the associated eigenspaces. In the Hilbert sense, $\oplus_{q\in\N}Z_q = L^2(\mathbb{S}^2)$. Note that $P$ can also be viewed naturally as an operator acting on $L^2(\R\times \mathbb{S}^2)$ with eigenspaces $\mathcal{Z}_q:= L^2(\R)\otimes Z_q$ verifying $\oplus_{q\in \mathbb{N}}\mathcal{Z}_q = L^2(\R\times \mathbb{S}^2)$ in the hilbert sense. 

To simplify notations, we define $l = \frac{an}{a^2+r^2}$. Note that we have $l_+ = l(r_+)$ (resp. $l_-=l(r_-)$). We finally recall \footnote{Pay attention to the fact that the correct definition for $k_{\pm\infty}$ is $-l_{\pm}$ and not $l_{\pm}$ as in the article \cite{GGH}} the definition of the separable comparison operators defined in subsection 12.2 of \cite{GGH}: \newline
$h_{\pm \infty} = -l_{\pm}^2 - \partial_x^2 + \frac{\Delta_r}{\lambda^2(r^2 + a^2)^2}P + \Delta_r m^2$, \hspace{1.0cm} $k_{\pm \infty}:=-l_{\pm}$,\hspace{1.0cm} $h_{0,\pm \infty}:= h_{\pm \infty} + k_{\pm \infty}^2$\newline
 where $l_{\pm}:= \frac{an}{r_{\pm}^2+a^2}$. 

Following what we did previously for the Klein-Gordon energy spaces and operators (replacing $h$ by $h_{\pm \infty}$ and $k$ by $k_{\pm \infty}$), we define the spaces $\mathcal{E}_{\pm \infty}$, $\dot{\mathcal{E}}_{\pm \infty}$, $\mathcal{E}^n_{\pm \infty}$ and $\dot{\mathcal{E}}^n_{\pm \infty}$ as well as the closed operators $H_{\pm \infty}$ and $\dot{H}_{\pm \infty}$, $H^n_{\pm \infty}$
 and $\dot{H}^n_{\pm \infty}$. Note that $\dot{H}_{\pm\infty}$ is selfadjoint (see \cite{GGH} for more details).

We also introduce $i_{+}$ and $i_{-}$ which are, as in \cite{GGH}, smooth functions with $i_{+} = 1$, $i_{-} = 0$ in the neighborhood of $+\infty$, $i_{+} = 0$, $i_{-} = 1$ in the neighborhood of $-\infty$ and $i_{+}^2 + i_{-}^2 = 1$. 


\subsection{Definition of the comparison dynamics}
The goal of this work is to compare the natural Klein-Gordon dynamics on the De Sitter-Kerr space-time with the transport dynamics along principal null geodesics. The first difficulty that appears is the non commutation of $w_+$ and $w_-$ (unlike in the Schwarzschild case). Due to this fact, the solutions of the equation:
\begin{equation}
\frac{1}{2}(v_+v_- + v_-v_+)u = 0
\end{equation}
cannot be written as the sum of a part transported along incoming null geodesics and a part transported along outgoing null geodesics. We can write the equation that corresponds to these transports but it introduces new terms which are difficult to analyse. Instead, we will consider two different dynamics (one associated with the incoming transport and one associated with the outgoing transport).

\begin{definition}[dynamics associated with the outgoing transport]
\label{defOut}
We define the operator $\tilde{w}_- = -\partial_x - il + 2il_+$ (designed to commute with $w_+$ but with the same limit as $w_-$ in $r_+$). We consider the following equation:
\begin{equation}
(\partial_t + w_+)(\partial_t + \tilde{w}_-)u = 0
\end{equation}
which can be rewritten as a first order system
\begin{equation}
\label{outgoingDynamics}
\partial_t\begin{pmatrix}u_0 \\ u_1\end{pmatrix} = i\begin{pmatrix} 0 & 1 \\ h_{T,+} & 2k_{T,+}\end{pmatrix}\begin{pmatrix} u_0\\ u_1 \end{pmatrix}
\end{equation}
where $h_{T,+} = -(\partial_x + i(l-l_+))^2 - l_+^2$ and $k_{T,+} = -l_+$.
We call $H_{T,+}$ the matrix that appears in \eqref{outgoingDynamics} and $h_{0,T,+} = h_{T,+} + k_{T,+}^2$
\end{definition}

\begin{definition}[dynamics associated with the incoming transport]
We define the operator $\tilde{w}_+ = \partial_x - il + 2il_-$ (designed to commute with $w_-$ but with the same limit as $w_+$ in $r_-$). We consider the following equation:
\begin{equation}
(\partial_t + w_-)(\partial_t + \tilde{w}_+)u = 0
\end{equation}
which can be rewritten as a first order system
\begin{equation}
\label{incomingDynamics}
\partial_t\begin{pmatrix}u_0 \\ u_1\end{pmatrix} = i\begin{pmatrix} 0 & 1 \\ h_{T,-} & 2k_{T,-}\end{pmatrix}\begin{pmatrix} u_0\\ u_1 \end{pmatrix}
\end{equation}
where $h_{T,-} = -(-\partial_x + i(l-l_-))^2 - l_-^2$ and $k_{T,-} = -l_-$.
We call $H_{T,-}$ the matrix that appears in \eqref{incomingDynamics} and $h_{0,T,-} = h_{T,-} + k_{T,-}^2$
\end{definition}

\begin{remark}
The operators $h_{0,T,\pm}$, unbounded on $L^2(\R \times \mathbb{S}^2)$ with domain $\left\{\right. u\in L^2(\R \times \mathbb{S}^2), h_{0,T,+} u \in L^2(\R \times \mathbb{S}^2)\left.\right\}$ (where the derivative is understood in the distribution sense), are selfadjoint and non negative by classical arguments. Moreover lemma \ref{density} shows that the operators acting on $C^{\infty}_0$ are essentially selfadjoint.
\end{remark}
\begin{remark}
Because the analysis of both dynamics are very similar, we will focus here on the outgoing dynamics.
\end{remark}
\begin{remark}
Note that $H_{T,+}$ and $H_{T,-}$ depend on $n$ through $l$ and $l_{\pm}$. However we are not interested in uniformity with respect to $n$ in this paper. Therefore we keep this dependency implicit to alleviate the notations.
\end{remark}

\subsection{Spaces associated with the dynamics and energy operator}
Note that even though the angular momentum $n$ is fixed, we define $n$-dependent operators acting on the whole $L^2(\R\times\mathbb{S}^2)$ to simplify the arguments. However, we are interested in their action on the space of data with angular momentum equal to $n$.

%

We now define the energy spaces associated with the dynamics (on the model of what we did for the Klein-Gordon dynamics). We denote by $\mathcal{H}^s$ the (inhomogeneous) Sobolev spaces associated with $h_{0,T,+}$ (selfadjoint operator by the previous proposition), that is to say, the space $Dom(h_{0,T,+}^{s/2})$ endowed with the norm $\left\|(1+h_{0,T,+})^{s/2}\cdot\right\|_{L^2}$ if $s\geq 0$ and $(\mathcal{H}^s)^{*}$ if $s<0$. 
We also define $\mathcal{E}_{T,+}$ as the space $\mathcal{H}^{\frac{1}{2}} \times \mathcal{Y}^n$ endowed with the norm 
\begin{equation}
\left\| \begin{pmatrix} u_0 \\ u_1 \end{pmatrix}\right\|^2_{\mathcal{E}_{T,+}} = \left\| u_1 + l_+ u_0\right\|^2_{L^2} + \left<(h_{0,T,+}+1)u_0, u_0 \right>
\end{equation}

\begin{lemma}
\label{injectivity}
$\mathcal{H}^{1} = H^1_x:=\left\{ u\in L^2(\R\times\mathbb{S}^2), \partial_x u \in L^2(\R \times \mathbb{S}^2)\right\}$ and $h_{0,T,+}^{\frac{1}{2}}$ is injective on $\mathcal{H}^{1}$.
\end{lemma}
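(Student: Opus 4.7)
The plan is to unwind the definition of $h_{0,T,+}$ until it becomes a standard magnetic momentum squared, for which both statements become direct. Since $h_{T,+} = -(\partial_x + i(l - l_+))^2 - l_+^2$ and $k_{T,+} = -l_+$, the $l_+^2$-terms cancel in $h_{0,T,+} = h_{T,+} + k_{T,+}^2$, leaving
\[
h_{0,T,+} = -(\partial_x + i V)^2, \qquad V := l - l_+,
\]
i.e.\ the square of the formally self-adjoint operator $-i\partial_x + V$. Since $l(r) = \frac{an}{a^2+r^2}$ extends smoothly to $[r_-,r_+]$, $V$ is a smooth bounded real-valued function of $x$ alone.

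To prove $\mathcal{H}^1 = H^1_x$, I would argue that $C_0^\infty(\R\times\mathbb S^2)$ is a form core and that the form norms are equivalent. Concretely, for $u \in C_0^\infty$ integration by parts gives
\[
\langle h_{0,T,+}u, u\rangle = \|(\partial_x + iV)u\|_{L^2}^2.
\]
Boundedness of $V$ together with the triangle inequality yields constants $c,C>0$ such that
\[
c(\|\partial_x u\|^2 + \|u\|^2) \le \|(\partial_x + iV)u\|^2 + \|u\|^2 \le C(\|\partial_x u\|^2 + \|u\|^2)
\]
on $C_0^\infty$. Invoking the essential self-adjointness of $h_{0,T,+}$ on $C_0^\infty$ (the forward-referenced lemma \ref{density}), $C_0^\infty$ is a form core for $h_{0,T,+}^{1/2}$, so $\mathcal{H}^1 = \mathrm{Dom}(h_{0,T,+}^{1/2})$ is the completion of $C_0^\infty$ in either of these equivalent norms, which is exactly $H^1_x$.

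For injectivity, since $h_{0,T,+} \ge 0$ is self-adjoint one has $\ker h_{0,T,+}^{1/2} = \ker h_{0,T,+}$. If $u \in \mathcal{H}^1$ satisfies $h_{0,T,+}^{1/2}u = 0$, extending the identity $\|h_{0,T,+}^{1/2}u\|^2 = \|(\partial_x + iV)u\|^2$ from $C_0^\infty$ to $\mathcal{H}^1 = H^1_x$ by density gives $(\partial_x + iV)u = 0$ in $L^2$. For almost every $\omega \in \mathbb S^2$, the section $u(\cdot,\omega)$ belongs to $H^1(\R)$ and solves the first-order linear ODE $\partial_x u = -iV u$, hence
\[
u(x,\omega) = e^{-i\int_{x_0}^x V(s)\,ds}\, u(x_0,\omega),
\]
so $|u(\cdot,\omega)|$ is constant in $x$. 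Being $L^2$ in $x$ forces $u(x_0,\omega) = 0$ for a.e. $\omega$, whence $u \equiv 0$.

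The only genuinely delicate point is the identification $\mathcal{H}^1 = H^1_x$: one really needs $C_0^\infty$ to be a form core for $h_{0,T,+}^{1/2}$, which is why the essential self-adjointness supplied by lemma \ref{density} is invoked. Everything else reduces to the pointwise boundedness of $V = l - l_+$ and an elementary ODE.
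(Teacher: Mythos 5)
Your proof is correct and follows essentially the same route as the paper: both identify $\langle h_{0,T,+}u,u\rangle$ with $\|(\partial_x+i(l-l_+))u\|_{L^2}^2$ on the core $C^{\infty}_0$ supplied by lemma \ref{density} and transfer this to $\mathcal{H}^1$ by density (the paper via Cauchy sequences in the graph norm, you via equivalence of form norms, which is the same argument), and both prove injectivity by gauging away the potential $l-l_+$ and observing that a function constant in $x$ and square-integrable in $x$ must vanish. The only cosmetic difference is that your appeal to $\ker h_{0,T,+}^{1/2}=\ker h_{0,T,+}$ is not actually used, and, like the paper's ``the other inclusion can be proved in the same way,'' your argument silently uses the standard density of $C^{\infty}_0$ in $H^1_x$.
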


\begin{proof}
Let $u\in Dom\left(h_{0,T,+}^{\frac{1}{2}}\right)$.Let $u_k \in C^{\infty}_0$ be such that $\lim\limits_{k\to \infty} u_k = u$ for the graph norm (see lemma \ref{density} for the existence). In particular, $\left(h_{0,T,+}^{\frac{1}{2}}u_k\right)$ is a Cauchy sequence in $L^2$ and the equalities $\left\|h_{0,T,+}^{\frac{1}{2}}(u_k-u_{k'})\right\|^2_{L^2} = \left<h_{0,T,+}(u_k-u_{k'}),(u_k-u_{k'})\right> = \left\|(D_x + (l-l_+))(u_k-u_{k'})\right\|^2_{L^2}$ show that $\left((D_x + (l-l_+))u_k\right)$ is also Cauchy in $L^2$. The limit in $L^2$ is exactly $(D_x + (l-l_+))u$ (understood in the distribution sense) by uniqueness of the limit for the distribution topology. This proves $\mathcal{H}^1 \subset H^1_x$ and $\left\|h_{0,T,+}^{\frac{1}{2}}u\right\|^2_{L^2} = \left\|(D_x + (l-l_+))u_k\right\|^2_{L^2}$. The other inclusion can be proved in the same way.
Now the injectivity: let $u\in \mathcal{E}_{T,+}$ be such that $h_{0,T,+}^{\frac{1}{2}}u = 0$. Then $D_x u +(l-l_+)u=0$. If we write $v = e^{i\int_{0}^{x}(l-l_+)ds}u$, then $v \in H^1_x$ verifies $D_x v = e^{i\int_{0}^{x}(l-l_+)ds}((l-l_+)u+ D_x u) = 0$. So $v= 0$ and then $u=0$.
\end{proof}

We finally define $\dot{\mathcal{E}}_{T,+}$ as the completion of $\mathcal{E}_{T,+}$ for the norm 
\begin{equation}
\left\| \begin{pmatrix} u_0 \\ u_1 \end{pmatrix}\right\|^2_{\mathcal{\dot{E}}_{T,+}} = \left\| u_1 + l_+ u_0\right\|^2_{L^2} + \left<h_{0,T,+}u_0, u_0 \right>
\end{equation}
Note that the inclusion $\mathcal{E}_{T,+}\hookrightarrow \dot{\mathcal{E}}_{T,+}$ is continuous and dense. 
\begin{prop}
\label{isometryHom}
The linear map
\begin{equation}
L:
\begin{cases}
\mathcal{E}_{T,+} \rightarrow L^2(\R\times\mathbb{S}^2)\oplus L^2(\R\times\mathbb{S}^2) \\
\begin{pmatrix} u_0 \\u_1 \end{pmatrix} \mapsto \begin{pmatrix} h_{0,T,+}^{\frac{1}{2}}u_0 \\ u_1 +l_+ u_0\end{pmatrix} 
\end{cases}
\end{equation}
extends continuously as a linear bijective isometry between $\dot{\mathcal{E}}_{T,+}$ and $L^2(\R\times\mathbb{S}^2)\oplus L^2(\R\times\mathbb{S}^2)$.
\end{prop}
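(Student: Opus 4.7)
The plan is to first verify the isometry identity on $\mathcal{E}_{T,+}$ directly from the definitions, extend by density to the completion $\dot{\mathcal{E}}_{T,+}$, and then establish surjectivity via a spectral approximation. The isometry step is immediate: expanding
\[
\|L(u_0, u_1)\|^2_{L^2 \oplus L^2} = \|h_{0,T,+}^{1/2} u_0\|^2_{L^2} + \|u_1 + l_+ u_0\|^2_{L^2} = \langle h_{0,T,+} u_0, u_0\rangle + \|u_1 + l_+ u_0\|^2_{L^2}
\]
recovers $\|(u_0, u_1)\|^2_{\dot{\mathcal{E}}_{T,+}}$ on the nose. Since $\mathcal{E}_{T,+}$ is dense in $\dot{\mathcal{E}}_{T,+}$ by construction and $L^2 \oplus L^2$ is complete, the bounded linear transformation theorem yields a unique isometric extension $\bar{L} : \dot{\mathcal{E}}_{T,+} \to L^2 \oplus L^2$.

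The core content is surjectivity. The image of $\bar L$ is automatically closed (an isometric image of a complete space is complete, hence closed in the ambient Hilbert space), so it suffices to show that $L(\mathcal{E}_{T,+})$ is dense in $L^2 \oplus L^2$. Given $(f, g)$ in the target, I would approximate $f$ by elements of $\mathrm{Range}(h_{0,T,+}^{1/2})$: by Lemma \ref{injectivity} the operator $h_{0,T,+}^{1/2}$ is injective on its domain, and combined with selfadjointness this gives $\overline{\mathrm{Range}(h_{0,T,+}^{1/2})} = \mathrm{Ker}(h_{0,T,+}^{1/2})^\perp = L^2$. Concretely, via the spectral calculus, set $u_0^k := \mathbf{1}_{[1/k, k]}(h_{0,T,+})\, h_{0,T,+}^{-1/2} f$, so that $u_0^k \in \bigcap_{s \geq 0} \mathrm{Dom}(h_{0,T,+}^s)$ (in particular in $\mathcal{H}^{1/2}$) and $h_{0,T,+}^{1/2} u_0^k \to f$ in $L^2$. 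Because $h_{0,T,+}$ commutes with $D_\phi$, the sequence stays in $\mathcal{Y}^n$ whenever $f$ does. Setting $u_1^k := g - l_+ u_0^k$, which lies in $\mathcal{Y}^n$ since $l_+$ is a real constant, we obtain $(u_0^k, u_1^k) \in \mathcal{E}_{T,+}$ with $L(u_0^k, u_1^k) = (h_{0,T,+}^{1/2} u_0^k, g) \to (f, g)$, proving density.

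The main obstacle is this surjectivity step, which rests on the dense range of $h_{0,T,+}^{1/2}$ in $L^2$. This is where Lemma \ref{injectivity} does essential work: injectivity together with selfadjointness forces the orthogonal complement of the range to vanish, and the spectral cut-off produces the required approximating sequence while preserving the angular momentum constraint. Everything else -- the explicit norm identity, the extension by density, and the closedness of the isometric image -- is bookkeeping once this abstract ingredient is in place.
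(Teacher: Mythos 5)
Your proposal is correct and follows essentially the same route as the paper: verify the norm identity, extend by density, and prove surjectivity by showing the range is dense using the spectral cut-off $\mathbf{1}_{[1/k,\cdot)}(h_{0,T,+}^{1/2})$ together with the injectivity of $h_{0,T,+}^{1/2}$ from Lemma \ref{injectivity}. The only cosmetic differences are your two-sided spectral window $[1/k,k]$ versus the paper's $[1/k,+\infty)$ and your explicit remark about closedness of the isometric image, neither of which changes the argument.
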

\begin{proof}
The extension and isometry property follows from the definition of $\dot{\mathcal{E}}_{T,+}$. To show the surjectivity of an isometry, it is enough to prove that its range is dense. Let $(v_0, v_1)\in L^2(\R\times\mathbb{S}^2)\oplus L^2(\R\times\mathbb{S}^2)$, because $h^{\frac{1}{2}}_{0,T,+}\geq 0$ and injective, $v_0^{n}:=1_{[\frac{1}{n}, +\infty)}\left(h_{0,T,+}^{\frac{1}{2}}\right)v_0$ converge towards $v_0$ in $L^2(\R\times\mathbb{S}^2)$. If we define the bounded function $f:=\frac{1}{x}1_{[\frac{1}{n}, +\infty)}(x)$ we can take $u_0^n:= f\left(h_{0,T,+}^{\frac{1}{2}}\right)v_0^n$ and $u_1^n := v_1 - l_+ u_0^n$ and $L(u^n) = (v_0^n, v_1)$ converges towards $v$ in $L^2(\R\times\mathbb{S}^2)\oplus L^2(\R\times\mathbb{S}^2)$.
\end{proof}

\begin{remark}
\label{remIsometryHom}
In this proof, if we assume that $v\in \mathcal{H}^1\oplus\mathcal{H}^1$, the constructed sequence converges towards $v$ for the natural norm on $\mathcal{H}^1 \oplus \mathcal{H}^1$.
\end{remark}

Finally we also define some spaces of data with angular momentum $n$ which behave well with respect to the operator $P$ (finite sum of eigenfunctions).
\begin{definition}
\begin{align*}
\mathcal{E}^{q,n}_{T,+} &= \mathcal{E}^{n}_{T,+}\cap \mathcal{Z}_q \oplus \mathcal{Z}_q \\ 
\mathcal{E}^{fin,n}_{T,+} &= \left\{ u\in \mathcal{E}^n_{T,+}: \exists Q\in \N / u\in \bigoplus_{q\leq Q}\mathcal{E}^{q,n}_{T,+}
\right\}
\end{align*}
And we get the corresponding spaces for $H_{\pm \infty}$ by replacing the indices $T,\pm$ by $\pm \infty$ in the definition.
\end{definition}

\begin{prop}
The operator (defined in definition \ref{defOut}) $H_{T,+}: C^{\infty}_0\rightarrow C^{\infty}_0$ (unbounded on $\dot{\mathcal{E}}_{T,+}$) has a selfadjoint extension with domain $\left\{u \in \dot{\mathcal{E}^n}: h^{\frac{1}{2}}_{0,T,+}u_0 \in \mathcal{H}^1, u_1 + l_+u_0\in \mathcal{H}^1\right\}$ given by \[\dot{H}_{T,+}\begin{pmatrix}u_0 \\ u_1\end{pmatrix} = 
\begin{pmatrix}u_1 \\
h_{0,T,+}u_0 - l^2_+u_0 - 2l_+u_1\end{pmatrix}\]

Moreover, $\mathcal{H}^2\oplus \mathcal{H}^1$ is dense in $Dom(H_{T,+})$ for the graph norm.
\end{prop}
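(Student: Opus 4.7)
The plan is to conjugate $H_{T,+}$ via the isometry $L$ of Proposition \ref{isometryHom} into a standard self-adjoint operator on $L^2(\R_x \times \mathbb{S}^2) \oplus L^2(\R_x \times \mathbb{S}^2)$. Working on the core $(C_0^\infty \cap \mathcal{Y}^n)^2$, a short computation crucially using that $l_+$ is a \emph{constant} (hence commutes with $h_{0,T,+}^{1/2}$) gives
\[
L H_{T,+} L^{-1} \;=\; \tilde{H} \;:=\; \begin{pmatrix} -l_+ & h_{0,T,+}^{1/2} \\ h_{0,T,+}^{1/2} & -l_+ \end{pmatrix}.
\]

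We next show that $\tilde H$ is self-adjoint on its natural domain $\mathcal{H}^1 \oplus \mathcal{H}^1$. Conjugation by the constant orthogonal Hadamard matrix $\tfrac{1}{\sqrt{2}}\begin{pmatrix} 1 & 1 \\ 1 & -1 \end{pmatrix}$ diagonalises the off-diagonal part of $\tilde H$ as $\mathrm{diag}(h_{0,T,+}^{1/2}, -h_{0,T,+}^{1/2})$, which is self-adjoint on $\mathcal{H}^1 \oplus \mathcal{H}^1$ as a direct sum, since $h_{0,T,+}^{1/2}$ itself is self-adjoint (inherited from the self-adjointness of $h_{0,T,+}$). The remaining term $-l_+ I$ is bounded self-adjoint, so Kato--Rellich yields self-adjointness of $\tilde H$ on $\mathcal{H}^1 \oplus \mathcal{H}^1$.

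Pulling back produces the self-adjoint operator $\dot{H}_{T,+} := L^{-1} \tilde{H} L$ on $\dot{\mathcal{E}}_{T,+}^n$ with domain $L^{-1}(\mathcal{H}^1 \oplus \mathcal{H}^1) = \{(u_0, u_1) \in \dot{\mathcal{E}}_{T,+}^n : h_{0,T,+}^{1/2} u_0 \in \mathcal{H}^1,\; u_1 + l_+ u_0 \in \mathcal{H}^1\}$, and inverting the intertwining identity reproduces the stated formula for its action. For the density of $\mathcal{H}^2 \oplus \mathcal{H}^1$ in this domain (for the graph norm), we mimic the proof of Proposition \ref{isometryHom}: given $(v_0, v_1) \in \mathcal{H}^1 \oplus \mathcal{H}^1$ in the $v$-picture, set $u_0^k := f_k(h_{0,T,+}^{1/2}) v_0 \in \mathcal{H}^2$ where $f_k(x) = x^{-1} 1_{[1/k, +\infty)}(x)$, and $u_1^k := v_1 - l_+ u_0^k \in \mathcal{H}^1$; spectral calculus then shows $L(u_0^k, u_1^k) \to (v_0, v_1)$ in $\mathcal{H}^1 \oplus \mathcal{H}^1$, i.e.\ in the graph norm of $\tilde H$.

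The main obstacle is the careful setup of the conjugation: $L$ is initially only defined as an isometry between $\mathcal{E}_{T,+}$ and its image in $L^2 \oplus L^2$, extended by continuity to all of $\dot{\mathcal{E}}_{T,+}$, so the intertwining identity $L H_{T,+} L^{-1} = \tilde H$ must first be verified on the common core $(C_0^\infty \cap \mathcal{Y}^n)^2$ and then extended by closure. This uses Lemma \ref{density} (density of $C_0^\infty$ in $Dom(h_{0,T,+})$) to guarantee that $L((C_0^\infty \cap \mathcal{Y}^n)^2)$ is a core for $\tilde H$ inside $\mathcal{H}^1 \oplus \mathcal{H}^1$, securing the unitary-equivalence argument.
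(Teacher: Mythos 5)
Your proposal is correct and follows essentially the same route as the paper: conjugation by the isometry $L$ of Proposition \ref{isometryHom} to the operator $\begin{pmatrix} -l_+ & h_{0,T,+}^{1/2} \\ h_{0,T,+}^{1/2} & -l_+ \end{pmatrix}$ on $\mathcal{H}^1\oplus\mathcal{H}^1$, and the density statement via the construction recorded in Remark \ref{remIsometryHom}. You merely spell out what the paper calls ``classical arguments'' (the Hadamard diagonalisation plus Kato--Rellich for the bounded constant $-l_+$), which is a welcome addition but not a different proof.
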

\begin{proof}
We use the isometry $L$ defined in \ref{isometryHom} to reformulate the problem. We have $LH_{T,+}L^{-1} = \begin{pmatrix}-l_+ & h^{\frac{1}{2}}_{0,T,+} \\ h^{\frac{1}{2}}_{0,T,+} & -l_+ \end{pmatrix}$ which has a selfadjoint extension with domain $\mathcal{H}^1\oplus\mathcal{H}^1$ by classical arguments. So $H_{T,+}$ has a selfadjoint extension with domain $L^{-1}(\mathcal{H}^1\oplus \mathcal{H}^1) = \left\{u \in \dot{\mathcal{E}^n}: h^{\frac{1}{2}}_{0,T,+}u_0 \in \mathcal{H}^1, u_1 + l_+u_0\in \mathcal{H}^1\right\}$. For the density property, we use the remark \ref{remIsometryHom}.
\end{proof}

\begin{remark}
\label{Hinhom}
We can similarly show that the operator $H_{T,+}+\begin{pmatrix}0 & 0 \\ Id & 0\end{pmatrix}$ with domain $\mathcal{H}^2\oplus\mathcal{H}^1$ is selfadjoint on $\mathcal{E}_{T,+}$. Then as a consequence of the Hille-Yosida theorem, if we add the bounded operator $\begin{pmatrix}0 & 0 \\-Id & 0\end{pmatrix}$, we get $i$ times the generator of a $C^0$-group.  We denote it by $e^{itH_{T,+}}$. By classical $C^0$-group theory, there exist $A,B>0$ such that for all $u\in \mathcal{E}_{T,+}$ and for all $t\in \R_+$
\[
\left\| e^{itH_{T,+}}u\right\|_{\mathcal{E}_{T,+}}\leq Ae^{tB}\left\|u\right\|_{\mathcal{E}_{T,+}}
\]
Note that $\dot{H}_{T,+}$ is an extension of $H_{T,+}$. As a consequence, the two $C^0$-group $e^{itH_{T,+}}$ and $e^{it\dot{H}_{T,+}}$ coincide on $\mathcal{E}_{T,+}$. Indeed, for $u\in Dom(H_{T,+})$, the derivative of $e^{-it\dot{H}_{T,+}}e^{itH_{T,+}}u$ exists for the topology of $\dot{\mathcal{E}}_{T,+}$ (weaker of the two topologies) and is equal to zero. With the value at $t=0$ we get $e^{itH_{T,+}}u= e^{it\dot{H}_{T,+}}u$. For a general $u\in \mathcal{E}_{T,+}$, we take $(u_n)$ a  sequence of elements of $Dom(H_{T,+})$ converging to $u$ in $\mathcal{E}_{T,+}$. Then $e^{itH_{T,+}}u_n$ converges to $e^{itH_{T,+}}u$ in $\mathcal{E}_{T,+}$ but also to $e^{it\dot{H}_{T,+}}u$ in $\dot{\mathcal{E}}_{T,+}$. We conclude by uniqueness of the limit for the topology of $\dot{\mathcal{E}}_{T,+}$.
\end{remark}

\begin{remark}
We also introduce the space of data with angular momentum $n$ as $\mathcal{H}^1_n = \mathcal{H}^1\cap \mathcal{Y}^n$, $\mathcal{E}^n_{T,+} = \mathcal{H}^1_n\oplus \mathcal{Y}^n$ and $\dot{\mathcal{E}}^n_{T,+}$ the closure of $\mathcal{E}^n_{T,+}$ in $\dot{\mathcal{E}}_{T,+}$.
\end{remark}

We will often work with smooth compactly supported data. Therefore, we need density lemmas to recover more general results (see lemma \ref{density} and \ref{density2} of the appendix).

\subsection{Kirchoff formula}
In this section, we find an explicit expression $u(t)=\begin{pmatrix} u_0(t), u_1(t) \end{pmatrix}$ for the outgoing dynamics for smooth compactly supported initial data $\begin{pmatrix} u_0 \\u_1 \end{pmatrix}$.

\begin{definition}
We define $e^{-tw_+}$ and $e^{-t\tilde{w}_-}$ as the propagators of the dynamics generated on $L^2(\R\times \mathbb{S}^2)$ by the self-adjoint operators $-\frac{w_+}{i}$ (with domain $\mathcal{D}_{w_+}:=\left\{ u\in L^2: \partial_x u + ilu \in L^2 \right\}$) and $-\frac{\tilde{w}_-}{i}$ (with domain $\mathcal{D}_{\tilde{w}_-}:= \left\{ u \in L^2: -\partial_x u -ilu+2l_+u \in L^2 \right\}$).
\end{definition}
\begin{remark}
We have $C^{\infty}_0 \subset \bigcap_{k\in \N} Dom(w_+^k)$ and $C^{\infty}_0 \subset \bigcap_{k \in \N} Dom(\tilde{w}_-^k)$
\end{remark}

\begin{prop}
\label{transport}
We have the following expressions, for $f\in L^2(\mathbb{R}\times \mathbb{S}^2)$:
\begin{equation}
e^{-tw_+}f (x)= e^{i\int_{x}^{x-t}l(s)\dd s}f(x-t)
\end{equation}
\begin{equation}
e^{-t\tilde{w}_-}f(x) = e^{i\int_{x}^{x+t} l(s) - 2l_+\dd s}f(x+t)
\end{equation}
Analogous formulas hold for $\tilde{w}_+$ and $w_-$.
In particular, $e^{-tw_+}$ and $e^{-t\tilde{w}_-}$ send $C^{\infty}_0(\R \times \mathbb{S}^2)$ into itself and we have a propagation of the support. That is to say, if $supp(f) \subset (-R,R)$, then $supp(e^{-tw_+}f)\subset (-R+t, R+t)$ and $supp(e^{-\tilde{w}_-}f)\subset (-R-t, R-t)$.
\end{prop}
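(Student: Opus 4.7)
The plan is to apply the method of characteristics to the transport equations $(\partial_t + w_+)u = 0$ and $(\partial_t + \tilde w_-)u = 0$, and then identify the resulting candidate propagators with $e^{-tw_+}$ and $e^{-t\tilde w_-}$ via uniqueness for the associated abstract Cauchy problem.

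For $w_+$, I would set
\[
U_+(t)f(x,\omega) := e^{i\int_x^{x-t}l(s)\ud s}f(x-t,\omega).
\]
For $f\in C^{\infty}_0(\R\times\mathbb{S}^2)$ this is smooth and, since $f(x-t,\omega)=0$ unless $x-t$ lies in the support of $f$ in the $x$-variable, the support statement $\mathrm{supp}(U_+(t)f)\subset (-R+t,R+t)$ follows immediately. Because $l$ is real-valued and the phase factor has modulus one, translation invariance of Lebesgue measure gives $\|U_+(t)f\|_{L^2}=\|f\|_{L^2}$, so $U_+(t)$ extends uniquely to a unitary operator on $L^2(\R\times\mathbb{S}^2)$, and the group property $U_+(t+s)=U_+(t)U_+(s)$ follows from a change of variables inside the integral exponent.

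Next I would verify that $U_+$ solves the right equation on a dense core. For $f\in C^{\infty}_0$, the fundamental theorem of calculus yields
\[
\partial_t U_+(t)f = \bigl(-il(x-t)\bigr)e^{i\int_x^{x-t}l}f(x-t) - e^{i\int_x^{x-t}l}f'(x-t),
\]
\[
\partial_x U_+(t)f = i\bigl(l(x-t)-l(x)\bigr)e^{i\int_x^{x-t}l}f(x-t) + e^{i\int_x^{x-t}l}f'(x-t),
\]
which combine to $\partial_t U_+(t)f = -(\partial_x+il)U_+(t)f = -w_+ U_+(t)f$. Since $C^{\infty}_0$ is contained in the self-adjointness domain $\mathcal{D}_{w_+}$ and is stable under $U_+(t)$ (by the support propagation just proved), the orbit $t\mapsto U_+(t)f$ is a strongly $L^2$-differentiable solution to the abstract Cauchy problem generated by the self-adjoint operator $-w_+/i$. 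By Stone's theorem this solution is unique, so $U_+(t)f = e^{-tw_+}f$ on $C^{\infty}_0$; density (lemma \ref{density}) and boundedness then give equality on all of $L^2$.

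The formula for $\tilde w_-$ is obtained in exactly the same way, with $V_-(t)f(x,\omega):=e^{i\int_x^{x+t}(l(s)-2l_+)\ud s}f(x+t,\omega)$: one checks $\partial_t V_-(t)f = -\tilde w_- V_-(t)f$ by the same computation, noting that the derivatives of the exponent now produce the contribution $i(l(x+t)-2l_+)$ in $t$ and cancel correctly against $-(-\partial_x - il + 2il_+)V_-(t)f$. The analogous formulas for $w_-$ and $\tilde w_+$ are identical up to swapping the roles of $\pm\infty$. There is no serious obstacle here: the only points requiring a little care are the sign bookkeeping inside the two phase integrals and the verification that $C^{\infty}_0$ is a core for each of the first-order operators $-w_+/i$ and $-\tilde w_-/i$, which is precisely what the density lemmas of the appendix provide.
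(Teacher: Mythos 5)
Your proposal is correct and follows essentially the same route as the paper: verify the pointwise transport identity $\partial_t U_+(t)f = -w_+U_+(t)f$ on $C^{\infty}_0$, upgrade it to a strong $L^2$-derivative (the paper does this via lemma \ref{continuity}), and conclude by uniqueness for the abstract Cauchy problem (lemma \ref{uniqueness}), extending to all of $L^2$ by density and boundedness. The sign bookkeeping in your two phase integrals checks out, so there is nothing further to add.
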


\begin{proof}
First, because both sides are continuous with respect to the $L^2$ topology, it is enough to prove the equality for $f\in C^{\infty}_0(\R \times \mathbb{S}^2)$.
To check that, we compute
\begin{align*}
\frac{\dd}{\dd t} e^{i\int_{x}^{x-t}l(s)ds}f(x-t) &= (-il(x-t)f(x-t) - (\partial_x f)(x-t)) e^{i\int_{x}^{x-t}l(s)ds}
\end{align*}
and
\begin{align*}
(-\partial_x - il)e^{i\int_{x}^{x-t}l(s)ds}f(x-t) =& -(il(x-t)f(x-t)-il(x)f(x-t) \\
&+ (\partial_x f)(x-t) + il(x))e^{i\int_{x}^{x-t}l(s)ds} \\
=& (-il(x-t)f(x-t) -(\partial_x f)(x-t))e^{i\int_{x}^{x-t}l(s)ds} \\
=&\frac{d}{dt} e^{i\int_{x}^{x-t}l(s)ds}f(x-t)
\end{align*}
We use the lemma \ref{continuity} to show that $t\mapsto e^{i\int_{\bullet}^{\bullet-t}l(s)ds}f(\bullet-t)\in C^1(\R, L^2)$ and that the punctual expression computed previously gives the derivative.
We can then use the lemma \ref{uniqueness} to get the conclusion.
Similarly, we prove the expression for $e^{-t\tilde{w}_-}$.
\end{proof}

Now we write explicitly the dynamics for a class of smooth compactly supported initial data. 
\begin{prop}
\label{kirchoff}
Let $u = \begin{pmatrix} u_0 \\u_1 \end{pmatrix} \in \left(C^{\infty}_0(\R \times \mathbb{S}^2)\right)^2$, such that the following condition holds:
\begin{equation}
\label{conditionInt}
\int_{-\infty}^{+\infty} e^{-i\int_{s}^{0}(l-l_+)}(u_1 + l_+ u_0)(s)\dd s = 0
\end{equation}
 Then the following equality holds:
\begin{equation}
e^{it\dot{H}_{T,+}}u = \frac{1}{2}\begin{pmatrix} e^{-tw_+}(u_0 + \tilde{u}_1) + e^{-t\tilde{w}_-}(u_0 - \tilde{u}_1)\\-\frac{w_+}{i}e^{-tw_+}(u_0 + \tilde{u}_1) -\frac{\tilde{w}_-}{i}e^{-t\tilde{w}_-}(u_0 - \tilde{u}_1)\end{pmatrix}
\end{equation}
where $\tilde{u}_1 (x)= -i\int_{-\infty}^{x} e^{-i\int_{s}^{x}(l-l_+)}(u_1 + l_+ u_0)(s)\dd s$
\end{prop}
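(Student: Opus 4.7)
The plan is to verify that the proposed right-hand side defines a $C^1$ curve in $\dot{\mathcal{E}}_{T,+}$ which solves the Cauchy problem $\partial_t u = i\dot H_{T,+} u$ with initial data $(u_0, u_1)$, and then invoke uniqueness for this problem coming from the $C^0$-group structure (Remark \ref{Hinhom}). The whole argument rests on three elementary identities obtained by direct expansion of the definitions of $w_+$, $\tilde w_-$, $h_{T,+}$, $k_{T,+}$: (i) the scalar relation $w_+ + \tilde w_- = 2il_+$; (ii) $[w_+, \tilde w_-] = 0$, which is immediate from (i); and (iii) $w_+ \tilde w_- = h_{T,+}$, i.e.\ $(\partial_x + il)(-\partial_x - il + 2il_+) = -(\partial_x + i(l-l_+))^2 - l_+^2$, which is a short but careful computation.

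First I would check that the hypothesis \eqref{conditionInt} forces $\tilde u_1$ to be compactly supported. For $x$ to the left of the supports of $u_0, u_1$, $\tilde u_1(x) = 0$ trivially, while for $x$ to the right one factors
\[
\tilde u_1(x) = -i\, e^{-i\int_0^x (l-l_+)}\int_{-\infty}^{+\infty} e^{-i\int_s^0(l-l_+)}(u_1 + l_+ u_0)(s)\, ds = 0
\]
by hypothesis. Hence $f := u_0 + \tilde u_1$ and $g := u_0 - \tilde u_1$ lie in $C_0^\infty$, and by Proposition \ref{transport} so do $A(t) := e^{-tw_+}f$ and $B(t) := e^{-t\tilde w_-}g$ for every $t$. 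The candidate $(u_0(t), u_1(t)) = \bigl(\tfrac{1}{2}(A+B),\ \tfrac{i}{2}(w_+ A + \tilde w_- B)\bigr)$ is therefore a smooth compactly supported path in $\dot{\mathcal{E}}_{T,+}$, and in particular a $C^1$ curve in the domain of $\dot H_{T,+}$.

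Using $\partial_t A = -w_+ A$ and $\partial_t B = -\tilde w_- B$, the first line $\partial_t u_0 = iu_1$ of the system is a one-line check. For the second line, combining (iii), (i), and the fact that the scalar $w_+ + \tilde w_-$ commutes with everything yields
\[
\partial_t u_1 - i h_{T,+} u_0 - 2ik_{T,+} u_1 = -\Bigl(\tfrac{i}{2}(w_+ + \tilde w_-) + l_+\Bigr)(w_+ A + \tilde w_- B) = 0,
\]
since $\tfrac{i}{2}(2il_+) + l_+ = 0$. Finally, $u_0(0) = \tfrac{1}{2}(f+g) = u_0$ is obvious; for $u_1(0)$, differentiating under the integral sign in the definition of $\tilde u_1$ gives the identity $i\partial_x \tilde u_1 = (u_1 + l_+ u_0) + (l-l_+)\tilde u_1$, and a short substitution into $\tfrac{i}{2}(w_+ f + \tilde w_- g)$ shows that the phase factors cancel to leave exactly $u_1$. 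The main (mildly tedious) obstacle is this initial-condition calculation together with the verification of identity (iii); once those are in hand, the rest reduces to bookkeeping, and the uniqueness statement for $i\dot H_{T,+}$ closes the argument.
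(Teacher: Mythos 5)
Your proposal is correct and follows essentially the same route as the paper: verify that the explicit formula defines a solution of the first-order system with the right initial data, then invoke uniqueness for the Cauchy problem associated with the $C^0$-group generated by $i\dot{H}_{T,+}$ (Proposition \ref{uniqueness}). The one point you gloss over, which the paper handles via Lemma \ref{continuity}, is upgrading the pointwise computation of $\partial_t$ to genuine differentiability of the curve in the topology of $\dot{\mathcal{E}}_{T,+}$; otherwise your identities (i)--(iii) and the initial-condition check coincide with the paper's computation.
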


\begin{remark}
The condition \eqref{conditionInt} ensures that $\tilde{u}_1$ has compact support. Indeed, since $u_1$ and $u_0$ are compactly supported, for $x$ large enough we have:
\[
\tilde{u}_1(x) = -i e^{-i\int_{0}^x(l-l_+)}\int_{-\infty}^{+\infty}e^{-i\int_s^0(l-l_+)}(u_1+l_+u_0)(s)\dd s
\]
\end{remark}

\begin{proof}
The idea once again is to apply the proposition \ref{uniqueness}. In what follows, we check the hypothesis of the lemma.
We call $f(t,x)$ the right hand side of the equality. For all $t \in \R$ we have $f(t,.) \in C^{\infty}_0(\R \times \mathbb{S}^2)$. So in particular for all $t \in \R$ we have $f(t,.) \in Dom(\dot{H}_{T,+})$.
We first compute $\partial_t f(t,x)$.
\begin{align*}
\partial_t f (t,x)&= \frac{1}{2}\begin{pmatrix}-w_+ e^{-tw_+}(u_0 + \tilde{u}_1)(x) - \tilde{w}_-e^{-t\tilde{w}_-}(u_0 - \tilde{u}_1)(x) \\
\frac{w_+^2}{i}e^{-tw_+}(u_0 + \tilde{u}_1)(x) + \frac{\tilde{w}_-^2}{i}e^{-t\tilde{w}_-}(u_0 - \tilde{u}_1)(x)
\end{pmatrix} \\
&= \begin{pmatrix}0 & i \\ i(w_+\tilde{w}_-) & -(w_+ + \tilde{w}_-) \end{pmatrix}f (t,.)(x)\\
&= i\dot{H}_{T,+}f (t,.)(x)
\end{align*}
It is a point-wise computation, we still need to prove that $f \in C^1(\R, \dot{\mathcal{E}}_{T,+})$ and that $\frac{\dd}{\dd t} f(t,.) = i\dot{H}_{T,+}f(t,.)$ for the topology on $\dot{\mathcal{E}}_{T,+}$. 
We write $f = \begin{pmatrix}f_0 \\f_1 \end{pmatrix}$
If we apply the lemma \ref{continuity} to $h_{0,T,+} f_0 \in L^2(\R\times \mathbb{S}^2)$ and to $f_1 + l_+ f_0\in L^2(\R\times\mathbb{S}^2)$. We get
 $h_{0,T,+} f_0 $ and $f_1 + l_+ f_0$ are in $C^1(\R, L^2)$ and that the derivative for the $L^2$ topology is given according to the previous point-wise computation. Because $f \mapsto (h_{0,T,+}f_0, f_1+l_+f_0)$ is an isometry from  $\dot{\mathcal{E}}$ to $L^2\times L^2$, we deduce that $f \in C^1(\R, \dot{\mathcal{E}}^n_{T,+})$ and that $\frac{\dd}{\dd t} f(t,.) = i\dot{H}_{T,+}f(t,.)$.
The last property to be checked is the initial value. For $t = 0$:
\begin{align*}
f(0, x) &= \begin{pmatrix} u_0 \\ -(\frac{w_+}{2i}+\frac{\tilde{w}_-}{2i})u_0 - (\frac{w_+}{2i}-\frac{\tilde{w}_-}{2i})\tilde{u}_1 \end{pmatrix} \\
&= \begin{pmatrix} u_0 \\ u_1 \end{pmatrix}
\end{align*}
%

So we can apply the proposition \ref{uniqueness} and conclude the proof.
\end{proof}


\section{Existence of the direct wave operators}
\subsection{Preliminary results: density, propagation and boundedness}
\begin{definition}
We define the following spaces:
\begin{itemize}
\item
\begin{align*}
\mathcal{E}^{n,L}_{T,+} =& \text{\huge\{} \begin{pmatrix} u_0 \\u_1 \end{pmatrix}\in\mathcal{E}^n_{T,+} : (u_1 + l_+u_0) \in L^1(\R, L^2(\mathbb{S}^2))\\
&\text{ and } \int_{-\infty}^{+\infty} e^{-i\int_{s}^{0}(l-l_+)}(u_1 + l_+ u_0)(s)\dd s = 0 \text{ a. e. in $\omega \in \mathbb{S}^2$} \text{\huge\} }
\end{align*}
\item
$\mathcal{E}^{fin,n,L}_{T,+} = \mathcal{E}^{n,L}_{T,+} \cap \mathcal{E}^{fin,n}_{T,+}$
\item
$\mathcal{D}^{fin}_{T,\pm}:= (C^{\infty}_0(\R_x \times \mathbb{S}^2))^2 \cap \mathcal{E}^{fin,n,L}_{T,\pm}$
\end{itemize}
\end{definition}

Note that The space $\mathcal{D}^{fin}_{T,\pm}:= (C^{\infty}_0(\R_x \times \mathbb{S}^2))^2 \cap \mathcal{E}^{fin,n,L}_{T,\pm}$ is dense in $\mathcal{E}^n_{T, \pm}$ (and thus in $\dot{\mathcal{E}}^n_{T, \pm}$ thanks to the continuous and dense inclusion). See lemma \ref{density3} for the details.

\begin{lemma}
\label{propagation}
For u$\in \mathcal{D}^{fin}_{T,+}$. We assume that $\text{supp} (u_0) \cup \text{supp} (u_1) \subset (-R, R)\times \mathbb{S}^2 $. Then there exists time dependent families $(u^l(t))$ and $(u^r(t))$ such that for all $t$, $u^{l}(t) \in \left(C^{\infty}_0(\R \times \mathbb{S}^2)\right)^2\cap \dot{\mathcal{E}}^{fin,n}_{T,+}$ and $u^{r}(t) \in \left(C^{\infty}_0(\R\times \mathbb{S}^2)\right)^2\cap \dot{\mathcal{E}}^{fin,n}_{T,+}$ and 
\[
\forall t\in \R, e^{it\dot{H}_{T,+}}u = u^{l}(t) + u^{r}(t)
\]
with the properties:
\begin{itemize}
\item
$supp \left(u^{l}(t)\right)\subset (-\infty, R-t)$ and $supp \left(u^{r}(t)\right)\subset (R+t, +\infty)$
\item
\[
\forall t \in \R, \left\|u^{l}_0(t)\right\|_{L^2} = \left\|u^{l}_0(0)\right\|_{L^2}
\]
and
\[
\forall t \in \R, \left\|u^{r}_0(t)\right\|_{L^2} = \left\|u^{r}_0(0)\right\|_{L^2}
\]
\end{itemize}
\end{lemma}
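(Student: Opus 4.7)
The plan is to read off both families directly from the Kirchhoff formula of Proposition \ref{kirchoff}. I would define
\begin{equation*}
u^r(t) := \tfrac{1}{2}\begin{pmatrix} e^{-tw_+}(u_0 + \tilde u_1) \\ -\tfrac{w_+}{i}e^{-tw_+}(u_0 + \tilde u_1) \end{pmatrix}, \qquad u^l(t) := \tfrac{1}{2}\begin{pmatrix} e^{-t\tilde w_-}(u_0 - \tilde u_1) \\ -\tfrac{\tilde w_-}{i}e^{-t\tilde w_-}(u_0 - \tilde u_1) \end{pmatrix},
\end{equation*}
where $\tilde u_1$ is the primitive introduced in Proposition \ref{kirchoff}; the identity $e^{it\dot H_{T,+}}u = u^l(t)+u^r(t)$ is then a direct re-bracketing of that formula. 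Because $u \in \mathcal{D}^{fin}_{T,+}$ satisfies the integral condition \eqref{conditionInt} and $(u_1+l_+u_0)$ vanishes outside $(-R,R)$, inspection of the definition of $\tilde u_1$ shows $\tilde u_1 \in C^\infty_0$ with $\mathrm{supp}(\tilde u_1)\subset [-R,R]\times \mathbb{S}^2$, so $u_0\pm \tilde u_1$ are smooth with support in $(-R,R)\times \mathbb{S}^2$.

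The support statement then reduces to Proposition \ref{transport}, which gives
\begin{equation*}
\mathrm{supp}\bigl(e^{-tw_+}(u_0+\tilde u_1)\bigr)\subset (-R+t,R+t)\times \mathbb{S}^2,\qquad \mathrm{supp}\bigl(e^{-t\tilde w_-}(u_0-\tilde u_1)\bigr)\subset (-R-t,R-t)\times \mathbb{S}^2.
\end{equation*}
Since $w_+$ and $\tilde w_-$ are first-order differential operators in $x$ only, these support bounds pass to $u^r(t)$ and $u^l(t)$ and entail the half-line containments stated in the lemma. The same observation — that $w_+,\tilde w_-$ depend on $n$ only through the smooth potential $l(r)$ and do not touch $(\theta,\phi)$ — means the propagators commute with the angular operator $P$; consequently they preserve $\mathcal{Y}^n$ and the finite angular expansion defining $\mathcal{E}^{fin,n}_{T,+}$, placing $u^l(t),u^r(t)$ in $\dot{\mathcal{E}}^{fin,n}_{T,+}$ for every $t\in \R$.

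For the $L^2$ norm conservation I would invoke the unitarity of $e^{-tw_+}$ and $e^{-t\tilde w_-}$ on $L^2(\R \times \mathbb{S}^2)$, which holds by their very definition as propagators of the self-adjoint operators $-w_+/i$ and $-\tilde w_-/i$. This immediately gives
\begin{equation*}
\|u^r_0(t)\|_{L^2} = \tfrac12\|e^{-tw_+}(u_0+\tilde u_1)\|_{L^2} = \tfrac12\|u_0+\tilde u_1\|_{L^2} = \|u^r_0(0)\|_{L^2},
\end{equation*}
and symmetrically for $u^l_0$. The only genuinely delicate bookkeeping is verifying that $\tilde u_1$ inherits compact support from $(u_0,u_1)$, where the integral condition \eqref{conditionInt} is essential; once that is in hand, the rest is a direct consequence of Propositions \ref{kirchoff} and \ref{transport} together with the unitarity of the transport propagators.
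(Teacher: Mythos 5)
Your proof is correct and takes essentially the same route as the paper: the identical decomposition read off from the Kirchhoff formula of Proposition \ref{kirchoff}, Proposition \ref{transport} for the support propagation, and unitarity of $e^{-tw_+}$ and $e^{-t\tilde w_-}$ for the $L^2$ conservation. The extra bookkeeping you supply (compact support of $\tilde u_1$ via \eqref{conditionInt}, commutation with the angular operator to stay in $\dot{\mathcal{E}}^{fin,n}_{T,+}$) only makes explicit what the paper leaves implicit.
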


\begin{proof}
We use the explicit expression of $e^{it\dot{H}_{T,+}}u$ given by proposition \ref{kirchoff}. We remark that the condition \eqref{conditionInt} implies that $supp (\tilde{u}_1) \subset (-R, R)$.
We define \[u^{l}(t) = \frac{1}{2}\begin{pmatrix}e^{-\tilde{w}_-t}(u_0 - \tilde{u}_1) \\\frac{-\tilde{w}_-}{i}e^{-\tilde{w}_-t}(u_0 - \tilde{u}_1)\end{pmatrix}\] and \[ u^{r}(t) = \frac{1}{2}\begin{pmatrix}e^{-w_+t}(u_0 + \tilde{u}_1) \\\frac{-w_+}{i}e^{-w_+t}(u_0 + \tilde{u}_1)\end{pmatrix}\]
With this definition, the first condition of the lemma is a direct consequence of the proposition \ref{transport}. For the second condition, we use the fact that $\frac{w_+}{i}$ and $\frac{\tilde{w}_-}{i}$ are selfadjoint on $L^2$. 
\end{proof}

Until the end of this section, we use the notation $\begin{pmatrix}u_0(t) \\u_1(t) \end{pmatrix}:= e^{it\dot{H}_{T,+}}u$.
The following boundedness lemma will be useful to differentiate $e^{-it\dot{H}^n_{+\infty}}i_+ e^{it\dot{H}_{T,+}}u$, for $u\in D^{fin}_{T,+}$.
\begin{lemma}
\label{boundedness2}
Let $Q \in \N$. For all $u\in \bigoplus_{q\leq Q}\mathcal{E}^{q,n}_{T,+}\cap (C^{\infty}_0 (\R \times \mathbb{S}^2))^2 $, 
\[
\left\|i_+ e^{it\dot{H}_{T,+}}u\right\|_{\dot{\mathcal{E}}^n_{+\infty}} \leq C(Q)e^{Bt}\left\|u \right\|_{\mathcal{E}^n_{T,+}}
\]
where $B>0$ and $C(Q)>0$
\end{lemma}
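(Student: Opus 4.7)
The plan is to decouple the estimate into a time-independent ``change of norm'' inequality and the exponential bound of $e^{it\dot{H}_{T,+}}$ on $\mathcal{E}^n_{T,+}$ provided by Remark \ref{Hinhom}. More precisely, I would first establish the static bound
\[
\|i_+ v\|_{\dot{\mathcal{E}}^n_{+\infty}} \leq C(Q)\,\|v\|_{\mathcal{E}^n_{T,+}}, \qquad v \in \bigoplus_{q\leq Q} \mathcal{E}^{q,n}_{T,+},
\]
and then compose with $\|e^{it\dot{H}_{T,+}}u\|_{\mathcal{E}^n_{T,+}} \leq A\,e^{Bt}\,\|u\|_{\mathcal{E}^n_{T,+}}$, using that the dynamics preserves the angular cutoff.

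For the static bound I would expand
\[
\|i_+ v\|^2_{\dot{\mathcal{E}}^n_{+\infty}} = \|i_+(v_1 + l_+ v_0)\|^2_{L^2} + \langle h_{0,+\infty}(i_+ v_0), i_+ v_0\rangle,
\]
dominate the first term by $\|v\|^2_{\dot{\mathcal{E}}^n_{T,+}}$ since $|i_+|\leq 1$, and substitute $h_{0,+\infty} = -\partial_x^2 + \frac{\Delta_r}{\lambda^2(r^2+a^2)^2}P + \Delta_r m^2$ in the second. The kinetic contribution is controlled by $\|(\partial_x i_+)v_0\|^2 + \|\partial_x v_0\|^2$: the first piece is fine because $\partial_x i_+$ is bounded with compact support, and the second is handled via the identity $h_{0,T,+} = -(\partial_x + i(l-l_+))^2$ together with the boundedness of $l-l_+$, giving $\|\partial_x v_0\|^2 \leq C\,\langle(h_{0,T,+}+1)v_0, v_0\rangle \leq C\,\|v\|^2_{\mathcal{E}^n_{T,+}}$. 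The $\Delta_r m^2$ term is a bounded multiplier on $[r_-, r_+]$. The $P$ term is where $C(Q)$ appears: $i_+$ and the multiplier $\Delta_r/(\lambda^2(r^2+a^2)^2)$ depend only on $x$ and commute with $P$, and on $\bigoplus_{q\leq Q}\mathcal{Z}_q$ one has $\langle Pv_0, v_0\rangle \leq \lambda_Q\,\|v_0\|^2$.

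For the second step, I would observe that $h_{T,+}$ and $k_{T,+}$ involve only $\partial_x$ and functions of $x$, hence commute with $P$; consequently $\dot{H}_{T,+}$ leaves each $\mathcal{Z}_q\oplus\mathcal{Z}_q$ invariant and the $C^0$-group $e^{it\dot{H}_{T,+}}$ preserves $\bigoplus_{q\leq Q}\mathcal{E}^{q,n}_{T,+}$. Applying the static bound to $e^{it\dot{H}_{T,+}}u$ and then the $C^0$-group bound of Remark \ref{Hinhom} yields the claimed estimate with constants $C(Q)$ and $B$.

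The only delicate point is the preservation of the angular cutoff under the dynamics, which is what forces the loss of uniformity in $Q$; the rest is routine bookkeeping resting on the boundedness of $l$, $l-l_+$, $\partial_x i_+$ and $\Delta_r$, together with the spectral estimate on $P$. I do not anticipate a genuine obstacle — the lemma is designed precisely to enable the subsequent legitimate differentiation of $e^{-it\dot{H}^n_{+\infty}}i_+ e^{it\dot{H}_{T,+}}u$ in the comparison energy space.
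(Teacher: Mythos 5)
Your proposal is correct and follows essentially the same route as the paper: a static bound $\|i_+ v\|_{\dot{\mathcal{E}}^n_{+\infty}} \leq C(Q)\|v\|_{\mathcal{E}^n_{T,+}}$ on $\bigoplus_{q\leq Q}\mathcal{E}^{q,n}_{T,+}$ (using commutation of $i_+$ and the dynamics with $P$ to pay $\lambda_Q$ on the angular term), composed with the exponential $C^0$-group bound of Remark \ref{Hinhom}. Your treatment of the kinetic term via $h_{0,T,+}=-(\partial_x+i(l-l_+))^2$ and the boundedness of $l-l_+$ is in fact slightly more explicit than the paper's computation, but it is the same argument.
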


\begin{proof}
Let $u\in \bigoplus_{q\leq Q}\mathcal{E}^{q,n}_{T,+} $.
The fact that $P$ commutes with $i_+e^{itH_{T,+}}$ gives that $i_+ e^{it\dot{H}^n_{T,+}}u \in \bigoplus_{q\leq Q}\mathcal{E}^{q,n}_{T,+}$, in particular for any function $f\in C^{\infty}(\R_x)$
\[
|\left<f^2 P u_0(t) , u_0(t)\right>|\leq C_0(Q) \left\|f(x)u_0(t)\right\|^2_{L^2}
\]
for some constant $C_0(Q)>0$.

We use this in the following computation
\begin{align*}
\left\| i_+e^{it\dot{H}_{T,+}}u \right\|^2_{\dot{\mathcal{E}}^n_{+\infty}} &= \left\|i_+u_1(t) + i_+l_+ u_0(t)\right\|^2_{L^2} - \left< \partial_x^2 i_+u_0(t), i_+u_0(t)\right> \\
&+ \left<\frac{\Delta_r}{\lambda^2(r^2+a^2)^2}Pi_+ u_0(t) + \Delta_r m^2 i_+ u_0(t), u_0(t)\right> \\
&\leq \left\| u(t) \right\|^2_{\dot{\mathcal{E}}^n_{T,+}} + C_1(Q) \left\| u_0(t)\right\|^2_{L^2}\\
&\leq C_2(Q) \left\| e^{it\dot{H}^n_{T,+}}u\right\|_{\mathcal{E}^n_{T,+}}
\end{align*}
We then use the remark \ref{Hinhom} to conclude.
\end{proof}

\subsection{Existence of the direct wave operators}
\begin{theorem}
\label{existenceDirect}
There exists $a_0>0$ such that for all $|a| < a_0$ and for all $n\in \Z$ the following holds:
for all $\phi \in \mathcal{E}^{fin,n,L}_{T,\pm}\cap (C^{\infty}_0(\R \times \mathbb{S}^2))^2$, the following limit exists in $\dot{\mathcal{E}}$:
\[                                       
\lim\limits_{t\to \infty} e^{-it\dot{H}^{n}}i_{\pm}^2 e^{it\dot{H}_{T,\pm} }\phi
\]
Moreover, the limit operator extends continuously as an operator from $\dot{\mathcal{E}}^n_{T,+}$ to $\dot{\mathcal{E}}^n$.
\end{theorem}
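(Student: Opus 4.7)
The approach is Cook's method. The plan is to show that, for $\phi$ in the dense subspace $\mathcal{D}^{fin}_{T,+}$ (which is dense in $\dot{\mathcal{E}}^n_{T,+}$ by Lemma \ref{density3}), the function
\[
F(t) := e^{-it\dot{H}^{n}}\, i_{+}^{2}\, e^{it\dot{H}_{T,+}}\phi
\]
is $C^{1}$ on $[0,+\infty)$ with $\frac{d}{dt}F \in L^{1}([0,+\infty);\dot{\mathcal{E}}^{n})$. The existence of the limit will then follow from the Cauchy criterion, and the extension to $\dot{\mathcal{E}}^n_{T,+}$ from a uniform bound together with density. The case of $\dot{H}_{T,-}$ is symmetric.

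To control the derivative, I exploit the Kirchhoff decomposition. By Lemma \ref{propagation}, for $\phi$ supported in $(-R,R)$ we have $e^{it\dot{H}_{T,+}}\phi = u^{l}(t) + u^{r}(t)$ with $\mathrm{supp}(u^{l}(t)) \subset (-\infty, R-t)$ and $\mathrm{supp}(u^{r}(t)) \subset (R+t, +\infty)$. Choosing $R_{0}$ so that $i_{+} \equiv 1$ on $[R_{0}, +\infty)$ and $i_{+} \equiv 0$ on $(-\infty, -R_{0}]$, for $t > T_{0} := R + R_{0}$ one has $i_{+}^{2}\, e^{it\dot{H}_{T,+}}\phi = u^{r}(t)$ and the commutator $[i_{+}^{2}, \dot{H}_{T,+}]$, which is supported in $(-R_{0}, R_{0})$, annihilates $u^{r}(t)$. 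Differentiating for such $t$ yields in $\dot{\mathcal{E}}^{n}$
\[
\frac{d}{dt}F(t) = i\, e^{-it\dot{H}^{n}} \bigl(\dot{H}_{T,+} - \dot{H}^{n}\bigr) u^{r}(t).
\]
Inspecting coefficients, $\dot{H}_{T,+} - \dot{H}^{n}$ on the outgoing support is built from the quantities $l - l_{+}$, $\Delta_{r}$ and $\tfrac{\Delta_{r}}{\lambda^{2}(r^{2}+a^{2})^{2}}P$, all of which vanish at $+\infty$ at rate $O(e^{-\kappa_{+} x})$ by the note on $|r - r_{+}|$. Combined with $\mathrm{supp}(u^{r}(t)) \subset (R+t, +\infty)$, this produces the exponentially small factor $e^{-\kappa_{+} t}$ in the $\dot{\mathcal{E}}^{n}$-norm of the integrand; the remaining factors are controlled using Proposition \ref{transport} (which propagates the derivatives of the smooth compactly supported $u^{r}(0)$) and the fact that $\phi \in \mathcal{E}^{fin,n,L}_{T,+}$ is a finite sum of eigenfunctions of $P$, so the angular term is bounded by a constant times an $L^{2}$-norm.

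The propagators $e^{-it\dot{H}^{n}}$ and $e^{it\dot{H}_{T,+}}$ are $C^{0}$-groups, hence a priori only bounded by $C e^{Bt}$ (Remark \ref{Hinhom} and the analogue for $\dot{H}^{n}$). The purpose of taking $a_{0}$ small is to ensure that the growth rate $B$ stays strictly below $\kappa_{+}$, which follows from the uniform spectral and resolvent estimates established in \cite{GGH} precisely under a smallness assumption on $a$. Combined with the bound on $[0, T_{0}]$ deduced from Lemma \ref{boundedness2} and Remark \ref{Hinhom}, this simultaneously gives the convergence of $F(t)$ and a uniform bound $\sup_{t \geq 0} \|F(t)\|_{\dot{\mathcal{E}}^{n}} \leq C \|\phi\|_{\dot{\mathcal{E}}^{n}_{T,+}}$ on the dense subspace $\mathcal{D}^{fin}_{T,+}$, from which the continuous extension to $\dot{\mathcal{E}}^n_{T,+}$ is standard. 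The main obstacle I expect is exactly this balance: the exponential decay $e^{-\kappa_{+} t}$ extracted from the horizon geometry must defeat the potentially superradiant growth $e^{Bt}$ of the Klein--Gordon propagator, and it is at this step that the smallness of $|a|$ is essential.
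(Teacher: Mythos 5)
Your overall strategy (Cook's method combined with the outgoing/incoming support decomposition of Lemma \ref{propagation} and the exponential decay of the coefficients near $r_+$) is the right one, but there are two genuine gaps. First, you compare $\dot{H}_{T,+}$ directly with the full Klein--Gordon generator $\dot{H}^n$, and you describe the difference as ``built from $l-l_+$, $\Delta_r$ and $\frac{\Delta_r}{\lambda^2(r^2+a^2)^2}P$''. That is the difference $h_{T,+}-h_{+\infty}$, not $h_{T,+}-h$: the full operator $h$ contains the non-separable angular part with $\theta$-dependent coefficients (the $\sigma^2$, $\Delta_\theta$ terms and the $\partial_\phi\partial_t$ cross term), and controlling that part of the perturbation is precisely the hard analysis that the paper does \emph{not} redo. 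The paper factors the limit through the separable comparison operator $\dot{H}^n_{+\infty}$: theorem 12.2 of \cite{GGH} already gives the wave operator between $\dot{H}^n$ and $\dot{H}^n_{+\infty}$, and Cook's method is applied only to $e^{-it\dot{H}^n_{+\infty}}i_+e^{it\dot{H}_{T,+}}$, whose perturbation $\delta_{2,1}$ really is the elementary first-order operator you describe. Relatedly, your mechanism for the smallness of $a$ is not the one available: \cite{GGH} does not give a growth rate $B<\kappa_+$ for $e^{it\dot{H}^n}$ to be beaten by $e^{-\kappa_+ t}$; it gives \emph{uniform boundedness} of $e^{it\dot{H}^n}$ on $\dot{\mathcal{E}}^n$ (theorem 12.1) for small $a$, and that uniform bound, not a rate comparison, is what makes the composition of the two strong limits legitimate.

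Second, the continuity statement does not follow from your argument. The Cook integral yields $\sup_t\|F(t)\|\leq\|F(0)\|+\int_0^\infty\|F'(s)\|\dd s$, and the right-hand side in your scheme involves the constant $C(Q)$ (number of angular modes of $\phi$), the factor $e^{\kappa_+ R}$ (size of the support), and $L^2$ norms of $u^r_0(0)$ and its derivative; none of these is dominated by $\|\phi\|_{\dot{\mathcal{E}}^n_{T,+}}$ uniformly over $\mathcal{D}^{fin}_{T,+}$, so you do not obtain an operator-norm bound on the dense subspace. The paper gets the bound by a separate, direct estimate at each fixed time: $\|e^{-it\dot{H}^n}i_+^2e^{it\dot{H}_{T,+}}u\|_{\dot{\mathcal{E}}^n}\leq C_n'\|i_+e^{it\dot{H}_{T,+}}u\|_{\dot{\mathcal{E}}^n_{+\infty}}\leq C_n'\left(\|u\|^2_{\dot{\mathcal{E}}^n_{T,+}}+\varepsilon(t)\right)^{1/2}$, where the error $\varepsilon(t)$ (which does depend on $Q$, $R$ and $L^2$ norms) tends to zero by the propagation lemma; letting $t\to+\infty$ kills the uncontrolled terms and leaves $\|W_{T,+}u\|_{\dot{\mathcal{E}}^n}\leq C_n'\|u\|_{\dot{\mathcal{E}}^n_{T,+}}$. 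You need an argument of this type; the uniform bound cannot be read off from the convergence proof.
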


\begin{proof}
Let $u\in D^{fin}_{T,+}$. 
Thanks to theorem 12.2 of \cite{GGH}, it is enough to prove the existence of the limit of 
\[
\lim\limits_{t\to \infty} e^{-it\dot{H}^n_{+\infty}}i_+ e^{it\dot{H}_{T,+}}u
\]
Thanks to proposition \ref{kirchoff}, we see that
$i_+e^{it\dot{H}^n_{T,+}}u \in (C^{\infty}_0(\R \times \mathbb{S}^2))^2$ . In particular for all $t\in\R$, $i_+e^{it\dot{H}_{T,+}}u \in \dot{\mathcal{E}}^n_{+\infty}$. 

Using this and the lemma \ref{boundedness2}, we can strongly differentiate the function $g(t) = e^{-it\dot{H}^n_{+\infty}}i_+ e^{it\dot{H}_{T,+}}u$.
We get 
\begin{align*}
\frac{\dd}{\dd t} g(t) &= e^{-it\dot{H}^n_{+\infty}} \left(i_+ iH_{T,+} - iH^n_{+\infty} i_+\right) e^{it\dot{H}_{T,+}}u
\end{align*}
We define
\begin{align*}
\delta &:= i_+ iH_{T,+} - iH^n_{+\infty} i_+ \\
& = \begin{pmatrix} 0& 0 \\ i\left(i_+h_{T,+} - h_{+\infty}i_+\right) & 0\end{pmatrix}
\end{align*}
We can compute further
\begin{align*}
\frac{\delta_{2,1}}{i} :=& i_+h_{T,+} - h_{+\infty}i_+\\
=& \partial_x^2(i_+) + 2\partial_x(i_+)\partial_x + i_+(l-l_+)^2 -2ii_+(l-l_+)\partial_x \\
&- i i_+ (\partial_x l) - i_+\frac{\Delta_r}{\lambda^2(r^2 + a^2)^2}P - i_+\Delta_r m^2
\end{align*}
All the terms in this expression decay at least like $Ce^{-\kappa_+ x}$ when $x\rightarrow +\infty$ and are zero in a neighborhood of $-\infty$ (thanks to the $i_+$ function). Indeed, we recall that
\begin{align*}
l-l_+ &= \frac{na}{r^2+a^2} - \frac{na}{r_+^2+a^2}\\
&= (r-r_+) \alpha(r)
\end{align*}
with $\alpha(r)$ bounded (and $r-r_+ \leq e^{-\kappa_+x}$) and
\begin{align*}
\partial_x l &= -\frac{2nr\Delta_r}{\lambda (a^2+r^2)^3}\\
&\leq (r-r_+) \beta(r)
\end{align*}
with $\beta(r)$ bounded.

We define $R>0$ such that $supp (u_0), supp (u_1) \subset (-R, R) $. We also denote by $Q$ an integer such that $u\in \oplus_{q\leq Q} \mathcal{E}^{q,n}_{T,+}$
We now use the lemma \ref{propagation} in the following computation:
\begin{align*}
\left\|\delta e^{it\dot{H}_{T,+}}u \right\|^2_{\dot{\mathcal{E}}^n} & = \left\|\delta_{2,1} (u^{in}_0(t)+ u^{out}_0(t)) \right\|^2_{L^2} \\
& \leq C(Q) e^{-2\kappa_+ (t-R)}2\left(\left\|u^{in}_0(0)\right\|^2_{L^2} + \left\|u^{out}_0(0)\right\|^2_{L^2}\right)
\end{align*}
We deduce that $\left\|\frac{\dd}{\dd t} f\right\|_{\dot{\mathcal{E}}^n}$ is integrable. So $\lim\limits_{t \to \infty}f(t)$ exists in $\dot{\mathcal{E}}^n$. 
To conclude the proof of the first part of the theorem, we use the theorem 12.2 of \cite{GGH} (and the fact that the family $e^{-it\dot{H}^n}i_+e^{it\dot{H}_{+\infty}}$ is uniformly bounded).

We now prove the extension property. Let $u\in \mathcal{D}^{fin}_{T,+}$. Recall that, thanks to the theorem 12.1 in \cite{GGH}, there exists $C_n>0$ independent from $u$ such that
\begin{align*}
\left\|e^{-it\dot{H}^n}i^2_+ e^{it\dot{H}_{T,+}}u\right\|_{\dot{\mathcal{E}}^n} &\leq C_n \left\| i^2_+ e^{it\dot{H}_{T,+}}u\right\|_{\dot{\mathcal{E}}^n}  \\
& \leq C'_n \left\| i_+ e^{it\dot{H}_{T,+}}u\right\|_{\dot{\mathcal{E}}^n_{+\infty}}
\end{align*}
The second equality following from the lemma 5.4 (and lemma 10.2 to see that $\dot{\mathcal{E}}_+ = \dot{\mathcal{E}}_{+\infty}$) of the article \cite{GGH}.
\begin{align*}
\left\| i_+ e^{it\dot{H}_{T,+}}u\right\|^2_{\dot{\mathcal{E}}^n_{+\infty}}  = & \left\|i_+(\partial_x + i(l-l_+)) u_0(t) - i(l-l_+)i_+u_0(t) + \partial_x(i_+)u_0(t) \right\|^2_{L^2} \\
&+ \left\|i_+(u_1(t) + l_+ u_0(t))\right\|^2_{L^2}+ \left< \left(\frac{\Delta_r}{\lambda^2(r^2+a^2)^2}P + \Delta_r m^2\right)i_+ u_0(t),i_+ u_0(t) \right>\\
\leq & 2\left\|u\right\|^2_{\dot{\mathcal{E}}^n_{T,+}} + 2\left\|-i(l-l_+)i_+u_0(t) + \partial_x(i_+)u_0(t)\right\|^2_{L^2} \\
&+ \left< \left(\frac{\Delta_r}{\lambda^2(r^2+a^2)^2}P + \Delta_r m^2\right)i_+ u_0(t),i_+ u_0(t) \right> \\
\leq & 2\left\|u\right\|^2_{\dot{\mathcal{E}}^n_{T,+}} + C(Q)\left<f(x) u_0(t), u_0(t)\right>
\end{align*}
where $f$ is a smooth function $O(e^{-\kappa_+ x})$ when $x\rightarrow +\infty$ and zero in a neighborhood of $-\infty$ (thanks to the $i_+$ cutoff). We use the lemma \ref{propagation} and we get (calling $R$ a real such that $supp (u_0), supp(u_1) \subset (-R, R)$):
\begin{align*}
\left\| i_+ e^{it\dot{H}_{T,+}}u\right\|^2_{\dot{\mathcal{E}}^n_{+\infty}}& \leq \left\|u\right\|^2_{\dot{\mathcal{E}}^n_{T,+}} + 2C(Q)e^{-\kappa_+ (t-R)}\left(\left\|u^{in}_0(0)\right\|^2_{L^2} + \left\|u^{out}_0(0)\right\|^2_{L^2}\right)
\end{align*}
Letting $t\rightarrow +\infty$, we have the following bound for the limit operator $W_{T,+}$:
\[
\left\|W_{T,+}u\right\|_{\dot{\mathcal{E}}^n}\leq C'_n \left\|u\right\|_{\dot{\mathcal{E}}^n_{T,+}}
\]
With the density lemma \ref{density3}, we get that $W_{T,+}$ extends uniquely to $\dot{\mathcal{E}}^n_{T,+}$.
\end{proof}

\section{Existence of the inverse wave operators}
\subsection{Preliminary results: boundedness, compatibility of domains}

Now we prove the existence of inverse wave operators.
We do in advance some computation that will be used later
\begin{lemma}
\label{computation}
We assume that $n\neq 0$ or $m>0$.
For $u\in Dom(\dot{H}^n_{+\infty})\cap \mathcal{E}^{n,q}_{+\infty}$ such that $i_+ u \in Dom(\dot{H}^n_{T,+})\cap \mathcal{E}^{n,q}_{+\infty}$ (so in particular for $u \in (C^{\infty}_0(\R \times \mathbb{S}^2))^2 \cap \mathcal{E}^{n,q}_{+\infty}$ ) we define
\begin{align*}
\delta' u :&= i(i_+\dot{H}^n_{+\infty} - \dot{H}_{T,+}i_+) u \\
& = \begin{pmatrix} 0 \\ \delta'_{2,1}u_0  \end{pmatrix}
\end{align*}
where 
\begin{align}
\frac{\delta'_{2,1}}{i} u_0 = &  i_+ \frac{\Delta_r}{\lambda^2(a^2+ r^2)^2}\lambda_q u_0 + i_+ \Delta_r m^2 u_0+ (\partial_x^2 i_+) u_0 + 2 (\partial_x i_+)(\partial_x u_0) \notag\\
& + 2(\partial_x i_+) i(l-l_+) u_0+ 2i_+i(l-l_+)(\partial_x u_0) + i_+ i (\partial_x l) u_0 - i_+ (l-l_+)^2u_0
\label{formuleDelta21}
\end{align}

Then the following inequalities hold:
\begin{enumerate}
\item \label{avecPoids}
\[
\left\|\delta'_{2,1} u_0\right\|^2_{L^2}\leq C_q \left\|h_{0,+\infty}^{\frac{1}{2}}\sqrt{q(r)}u_0 \right\|^2_{L^2} \\
\]
\item \label{sansPoids}
\[
\left\|\delta'_{2,1} u_0\right\|^2_{L^2}\leq C_q \left\|h_{0, +\infty}^{\frac{1}{2}} u_0 \right\|^2_{L^2}
\]
\end{enumerate}
where $q(r) = \sqrt{(r_+ -r)(r-r_-)}$

\end{lemma}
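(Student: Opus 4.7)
The plan is to prove both bounds by a term-by-term analysis of the seven summands in \eqref{formuleDelta21}, splitting them into \emph{horizon terms} (supported on $\mathrm{supp}(i_+)$) and \emph{transition terms} (supported on the compact set $K=\mathrm{supp}(\partial_x i_+)\cup\mathrm{supp}(\partial_x^2 i_+)$, bounded away from both horizons). Two geometric facts drive the estimates. First, on $\mathrm{supp}(i_+)$, the quantity $r-r_-$ is bounded below, so $q(r)^2=(r_+-r)(r-r_-)\sim r_+-r$, yielding $\Delta_r$, $l-l_+$ and $\partial_x l$ all of size $O(q^2)$, and $(l-l_+)^2=O(q^4)$; moreover $\partial_x q=O(q)$ (from $2q\,\partial_x q=\partial_x q^2=O(\Delta_r)$), hence $\partial_x\sqrt{q}=O(\sqrt{q})$. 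Second, under the hypothesis $n\neq 0$ or $m>0$, either $\lambda_q\geq c n^2>0$ makes $\lambda_q\frac{\Delta_r}{\lambda^2(r^2+a^2)^2}\gtrsim \Delta_r$, or $m^2\Delta_r$ already dominates on the right; in both cases one gets the key spectral bound
\[
\|\sqrt{\Delta_r}\,f\|_{L^2}^2 \leq C_q\,\|h_{0,+\infty}^{1/2}\,f\|_{L^2}^2
\]
for any $f$ in the appropriate form-domain.

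For item \ref{sansPoids} I apply this bound with $f=u_0$. Each of the four horizon $L^2$-type summands in \eqref{formuleDelta21} is of the form $O(q^2)u_0$, hence pointwise bounded by a constant multiple of $\sqrt{\Delta_r}\,u_0$ since $\Delta_r\sim q^2$ and $q$ is bounded. The three transition terms are supported on $K$, where $\Delta_r$ is bounded below, giving again an $L^2$ bound by $\|\sqrt{\Delta_r}u_0\|$. Finally the two summands containing $\partial_x u_0$ are controlled using $\|\partial_x u_0\|_{L^2}\leq\|h_{0,+\infty}^{1/2}u_0\|_{L^2}$, together with the boundedness of $\partial_x i_+$ and of $i_+(l-l_+)$.

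For the weighted item \ref{avecPoids}, I apply the spectral bound with $f=\sqrt{q}\,u_0$. The horizon $L^2$-type summands contribute $\|q^2 u_0\|_{L^2}^2$, which is controlled by $\|\sqrt{\Delta_r}\sqrt{q}\,u_0\|_{L^2}^2$ because $q^4\leq C\,\Delta_r\,q$ on $\mathrm{supp}(i_+)$ (again from $\Delta_r\sim q^2$). The transition terms are handled by the pointwise equivalence of $u_0$, $\sqrt{q}\,u_0$ and of their derivatives on $K$, where $q$ is bounded below. The delicate step is the cross-term $i_+(l-l_+)\partial_x u_0$: writing $\sqrt{q}\,\partial_x u_0=\partial_x(\sqrt{q}\,u_0)-(\partial_x\sqrt{q})u_0$ and using $\partial_x\sqrt{q}=O(\sqrt{q})$ on $\mathrm{supp}(i_+)$ yields
\[
q\,|\partial_x u_0|^2 \leq 2\,|\partial_x(\sqrt{q}\,u_0)|^2 + C\,q\,|u_0|^2;
\]
multiplying by the bounded factor $q^3$ and integrating, the $(l-l_+)^2=O(q^4)$ weight is just enough to buy both $\|\partial_x(\sqrt{q}\,u_0)\|_{L^2}^2$ and another $\|q^2 u_0\|_{L^2}^2$ on the right, each absorbed by $C_q\|h_{0,+\infty}^{1/2}\sqrt{q}\,u_0\|_{L^2}^2$. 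The main obstacle is precisely this tight accounting of the four powers of $q$ available in the cross-term, which I expect is what forces the specific weight $\sqrt{q}$ rather than something weaker.
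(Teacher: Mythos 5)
Your proof is correct and follows essentially the same route as the paper: the paper simply groups all seven summands at once as $f(x)\partial_x u_0+g_q(x)u_0$ with $f,g_q=O(q^2)$, obtains item (2) from the identity $\|h_{0,+\infty}^{1/2}u_0\|^2=\|\partial_x u_0\|^2+\langle\Delta_r(\tfrac{\lambda_q}{\lambda^2(a^2+r^2)^2}+m^2)u_0,u_0\rangle$ together with $\sqrt{\Delta_r}\geq Cq$ and the positivity supplied by $n\neq 0$ or $m>0$, and obtains item (1) by the very same commutation $f\partial_x u_0=fq^{-1/2}\partial_x(q^{1/2}u_0)-fq^{-1/2}(\partial_x q^{1/2})u_0$ with $\partial_x q^{1/2}=O(q^{1/2})$ that you perform on the cross-term. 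Your term-by-term horizon/transition split is just a more granular presentation of the same estimates, so there is nothing to add.
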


\begin{proof}
We remark that 
\begin{align*}
\delta'_{2,1} u_0 &= f(x) \partial_x u_0 + g_q(x) u_0 \\
\end{align*}
where $f, g_q \in C^{\infty}(\R)$ are $O(q(r)^2)$. So the second inequality is proved since 
\[ \left\|h_{0, +\infty}^{\frac{1}{2}} u_0 \right\|^2_{L^2} = \left\|\partial_x u_0 \right\|^2_{L^2} + \left\|\sqrt{\Delta_r\left(\frac{\lambda_q}{\lambda^2(a^2+r^2)^2} + m^2\right)} u_0 \right\|^2_{L^2} \]
with $\lambda_q \geq 0$ (strictly if $n\neq 0$ and we assume $m^2>0$ if $n=0$) and $\sqrt{\Delta_r}\geq C q(r)$.

To prove the first inequality:
\begin{align*}
\delta_{2,1} u_0 &= f(x)q(r)^{-\frac{1}{2}} \partial_x (q(r)^{\frac{1}{2}} u_0) + q(r)^{-\frac{1}{2}}(g_q(x) - f(x)q(r)^{-\frac{1}{2}}(\partial_x q(r)^{\frac{1}{2}})) (q(r)^{\frac{1}{2}} u_0) \\
& = f^1(x) \partial_x (q(r)^{\frac{1}{2}} u_0) + g^1_q(x)(q(r)^{\frac{1}{2}} u_0)
\end{align*}
where $f^1,g_q^1 \in C^{\infty}(\R)$ are in $O(q(r)^{\frac{3}{2}})$. So we get the first inequality (and even slightly better).
\end{proof}

Then we need a boundedness lemma:
\begin{lemma}
\label{boundedness3}
We still assume that $n\neq 0$ or $m>0$.
The multiplication operator \[i_{\pm}:(C^{\infty}_0 (\R \times \mathbb{S}^2))^2 \cap \left(\mathcal{Y}^n\right)^2 \rightarrow (C^{\infty}_0 (\R \times \mathbb{S}^2))^2 \cap \left(\mathcal{Y}^n\right)^2  \] extends to a bounded operator \[i_{\pm}:\dot{\mathcal{E}}^n_{\pm\infty} \rightarrow \dot{\mathcal{E}}^n_{T,\pm}\] 
It follows that for all $u\in\dot{\mathcal{E}}^n$, 
\[
\left\| i_{\pm}^2 e^{it\dot{H}^n}u \right\|_{\dot{\mathcal{E}}_{T,\pm} }\leq C_n \left\| u \right\|_{\dot{\mathcal{E}}^n }
\]
and finally
 $e^{-it\dot{H}^n_{T,\pm}}i_{\pm}^2 e^{it\dot{H}^n} \in \mathcal{B}(\dot{\mathcal{E}}^n, \dot{\mathcal{E}}^n_{T,\pm})$ is uniformly bounded.
\end{lemma}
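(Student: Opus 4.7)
The plan is to reduce all three assertions to the single multiplication bound
\[
\|i_+ u\|_{\dot{\mathcal{E}}_{T,+}} \leq C_n \|u\|_{\dot{\mathcal{E}}^n_{+\infty}}
\]
on smooth, compactly supported, angular-momentum-$n$ data, then to extend by density and to chain with already established results. Since $i_+$ depends only on $x$, multiplication by $i_+$ preserves $(\mathcal{Y}^n)^2$, the second component of the $\dot{\mathcal{E}}_{T,+}$-norm is automatic ($\|i_+(u_1+l_+u_0)\|_{L^2}\leq \|u_1+l_+u_0\|_{L^2}$), and the Leibniz identity
\[
(D_x + (l-l_+))(i_+ u_0) = i_+ D_x u_0 + i_+(l-l_+) u_0 - i(\partial_x i_+) u_0
\]
leaves only two error terms $\|(\partial_x i_+) u_0\|_{L^2}$ and $\|i_+(l-l_+) u_0\|_{L^2}$ to bound.

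Both error terms reduce to a weighted $L^2$-estimate on $u_0$ with weight $\sqrt{\Delta_r}$. Indeed, $|i_+(l-l_+)| \leq C(r_+-r)\mathbf{1}_{\mathrm{supp}\,i_+}$ and on that support $r_+-r = O(e^{-\kappa_+ x})$ while $\sqrt{\Delta_r}$ is only $O(e^{-\kappa_+ x/2})$, so $|i_+(l-l_+)| \leq C\sqrt{\Delta_r}$ globally; similarly $\partial_x i_+$ is compactly supported on a set where $\sqrt{\Delta_r}$ is bounded below, so $|\partial_x i_+| \leq C\sqrt{\Delta_r}$. Hence everything reduces to showing $\|\sqrt{\Delta_r}\,u_0\|_{L^2}^2 \lesssim_n \|u\|_{\dot{\mathcal{E}}_{+\infty}}^2$. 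This is exactly where the hypothesis $n\neq 0$ or $m>0$ enters: the restriction of $P$ to $\mathcal{Y}^n$ has spectrum bounded below by some $\lambda_0^{(n)} \geq 0$ which is strictly positive for $n\neq 0$, and
\[
\langle h_{0,+\infty} u_0, u_0\rangle \;\geq\; \left(\frac{\lambda_0^{(n)}}{\lambda^2\max_{[r_-,r_+]}(r^2+a^2)^2} + m^2\right) \|\sqrt{\Delta_r}\,u_0\|_{L^2}^2,
\]
the prefactor being strictly positive precisely under the assumption. Extending by density (via an argument analogous to the one used in the appendix for lemma \ref{density3}) gives the required continuous extension $i_+:\dot{\mathcal{E}}^n_{+\infty} \to \dot{\mathcal{E}}^n_{T,+}$ with operator norm $C_n$.

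The two remaining assertions then follow by chaining. Writing $i_+^2 e^{it\dot{H}^n} u = i_+\bigl(i_+ e^{it\dot{H}^n} u\bigr)$ one applies the multiplication bound just proved to the outer $i_+$, lemma 5.4 together with lemma 10.2 of \cite{GGH} (giving $\|i_+ w\|_{\dot{\mathcal{E}}^n_{+\infty}} \leq C \|w\|_{\dot{\mathcal{E}}^n}$) to the inner $i_+$, and theorem 12.1 of \cite{GGH} for the uniform boundedness of $e^{it\dot{H}^n}$ on $\dot{\mathcal{E}}^n$. For the third statement the selfadjointness of $\dot{H}_{T,+}^n$ established in the previous proposition makes $e^{-it\dot{H}^n_{T,+}}$ a unitary group on $\dot{\mathcal{E}}^n_{T,+}$ by Stone's theorem, so pre-composition does not increase the norm and the uniform bound is preserved.

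The main obstacle is the weighted coercivity $\|\sqrt{\Delta_r}\,u_0\|_{L^2}^2 \lesssim_n \langle h_{0,+\infty} u_0, u_0\rangle$: on the homogeneous space there is no a priori $L^2$-control of $u_0$ itself, and without the exclusion of the trivial angular-zero/massless mode the operator $h_{0,+\infty}$ degenerates to $-\partial_x^2$ on that subspace, making the multiplication by $i_+$ fail to act boundedly from $\dot{\mathcal{E}}^n_{+\infty}$ to $\dot{\mathcal{E}}^n_{T,+}$. Once the asymptotic $|l-l_+| \lesssim \sqrt{\Delta_r}$ near $r_+$ has reduced the whole estimate to this coercivity question, the rest of the lemma is a chain of routine cutoff, density, and unitarity arguments.
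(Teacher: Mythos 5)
Your proposal is correct and follows essentially the same route as the paper: Leibniz expansion of $(\partial_x+i(l-l_+))(i_+u_0)$, absorption of the two error terms $\|(\partial_x i_+)u_0\|_{L^2}$ and $\|i_+(l-l_+)u_0\|_{L^2}$ into the $\langle\Delta_r(\frac{P}{\lambda^2(a^2+r^2)^2}+m^2)u_0,u_0\rangle$ part of the $\dot{\mathcal{E}}^n_{+\infty}$-norm (which is exactly where $n\neq 0$ or $m>0$ is used), then density, the $i_+:\dot{\mathcal{E}}^n\to\dot{\mathcal{E}}^n_{+\infty}$ bound from \cite{GGH}, and unitarity of $e^{-it\dot{H}^n_{T,+}}$. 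Your reformulation of the absorption step as the single weighted coercivity $\|\sqrt{\Delta_r}\,u_0\|_{L^2}^2\lesssim_n\langle h_{0,+\infty}u_0,u_0\rangle$ is just a repackaging of the paper's bound $\langle f(x)u_0,u_0\rangle\leq C\|u\|^2_{\dot{\mathcal{E}}^n_{+\infty}}$ with $f=O(\Delta_r)$, not a different argument.
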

\begin{proof}
As usual, we prove only the $+$ case.
Let $u\in (C^{\infty}_0 (\R \times \mathbb{S}^2))^2 \cap \left(\mathcal{Y}^n\right)^2$.
\begin{align*}
\left\| i_+ u \right\|^2_{\mathcal{E}^n_{T,+} } &= \left\| i_+u_1 + i_+l_+u_0\right\|^2_{L^2}+\left\|(\partial_x+i(l-l_+))i_+u_0\right\|^2_{L^2} \\
&\leq \left\| i_+u_1 + i_+l_+u_0\right\|^2_{L^2} + 2(\left\|\partial_x i_+u_0\right\|^2_{L^2} + \left\|i(l-l_+)i_+u_0\right\|^2_{L^2}) \\
&\leq 2\left\| u \right\|^2_{\dot{\mathcal{E}}_{+\infty}} + \left<f(x)u_0, u_0\right>
\end{align*}
where $f$ is a smooth function in $O(e^{-\kappa_+ x})$ when $x\rightarrow +\infty$ and zero in a neighborhood of $-\infty$ (thanks to the $i_+$ cutoff).
So $\left<f(x)u_0,u_0\right>$ can be bounded by the term $\left<\Delta_r m^2 u_0, u_0\right>$ (or if $m=0$ and $n\neq 0$ by the term $\left<\frac{\Delta_r}{\lambda^2(a^2+r^2)^2}P u_0, u_0 \right> $ ) in the norm $\left\| u\right\|_{\dot{\mathcal{E}}^n_{+\infty}}$. In each case:
\[|\left<f(x)u_0, u_0\right>|\leq C\left\| u\right\|^2_{\dot{\mathcal{E}}^n_{+\infty}}
\]
and we deduce:
\[
\left\| i_+ u \right\|^2_{\dot{\mathcal{E}}^n_{T,+} } \leq (C+2)\left\|  u \right\|^2_{\dot{\mathcal{E}}^n_{+\infty}}
\]
Using the density lemma \ref{densityFin}, we deduce that $i_+$ has a unique bounded extension from $\dot{\mathcal{E}}^n_{+\infty}$ to $\dot{\mathcal{E}}^n_{T,+}$.

Finally, we use lemma 5.4 (and 10.2) of the article \cite{GGH} (boundedness of $i_+: \dot{\mathcal{E}}^n\rightarrow \dot{\mathcal{E}}^n_{+\infty}$) to conclude 
\[
\left\| i_+^2 u \right\|_{\mathcal{E}^n_{T,\mp} } \leq C'\left\|  u \right\|_{\dot{\mathcal{E}}^n}
\]
\end{proof}

In order to differentiate $e^{-it\dot{H}^n_{T,\pm}}i_{\pm}^2 e^{it\dot{H}^n}$, we have to prove the following lemma
\begin{lemma}
\label{domain}
We assume that $n\neq 0$ or $m>0$. \newline
$i_+(Dom(\dot{H}^n_{+\infty})\cap \mathcal{E}^{q, n}_{+\infty})\subset Dom(\dot{H}_{T,+})$ and therefore
$i_+(Dom(\dot{H}^n_{+\infty})\cap \mathcal{E}^{fin, n}_{+\infty})\subset Dom(\dot{H}_{T,+})$
\end{lemma}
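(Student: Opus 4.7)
The plan is to verify, for $u \in Dom(\dot{H}^n_{+\infty}) \cap \mathcal{E}^{q,n}_{+\infty}$, the three conditions that characterize membership of $i_+ u$ in $Dom(\dot{H}_{T,+})$ as given by the selfadjointness proposition, namely that $i_+ u \in \dot{\mathcal{E}}^n_{T,+}$, that $h_{0,T,+}^{1/2}(i_+ u_0) \in \mathcal{H}^1$, and that $i_+ u_1 + l_+ i_+ u_0 \in \mathcal{H}^1$. Recall that by lemma \ref{injectivity} the space $\mathcal{H}^1$ coincides with $H^1_x$.

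The first membership, $i_+ u \in \dot{\mathcal{E}}^n_{T,+}$, is the content of lemma \ref{boundedness3}. For the third, since $u \in Dom(\dot{H}^n_{+\infty})$, the analogous characterization applied to the $+\infty$ dynamics gives $u_1 + l_+ u_0 \in Dom(h_{0,+\infty}^{1/2})$; because $h_{0,+\infty} \geq -\partial_x^2$, this forces $u_1 + l_+ u_0 \in H^1_x$, and multiplication by the smooth bounded cutoff $i_+$ (with bounded derivative) preserves $H^1_x$, giving $i_+(u_1 + l_+ u_0) \in H^1_x = \mathcal{H}^1$.

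The main step is the second condition. By the functional-calculus identity $\mathcal{H}^s = Dom(h_{0,T,+}^{s/2})$, it is equivalent to $i_+ u_0 \in Dom(h_{0,T,+})$, i.e.\ $h_{0,T,+}(i_+ u_0) \in L^2$. The key identity is a byproduct of the computation made in lemma \ref{computation}: since $k_{+\infty} = k_{T,+} = -l_+$, the $u_1$ pieces and the $l_+^2$ pieces cancel in the commutator $\delta'$, leaving
\[
h_{0,T,+}(i_+ u_0) \;=\; i_+\, h_{0,+\infty} u_0 \;+\; i\,\delta'_{2,1} u_0.
\]
The first term belongs to $L^2$ because $u_0 \in Dom(h_{0,+\infty})$ (from $u \in Dom(\dot{H}^n_{+\infty})$, and since restriction to $\mathcal{Z}_q$ makes $P$ act as the scalar $\lambda_q$, so $h_{0,+\infty} u_0$ is unambiguously defined on this eigenspace). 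The second term belongs to $L^2$ by part \ref{sansPoids} of lemma \ref{computation}, which yields $\|\delta'_{2,1} u_0\|_{L^2} \leq C_q \|h_{0,+\infty}^{1/2} u_0\|_{L^2} < \infty$.

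The main obstacle is precisely this second step, since $h_{0,+\infty}$ carries the angular term $\frac{\Delta_r}{\lambda^2(r^2+a^2)^2} P$ and the mass term $\Delta_r m^2$ that are absent from $h_{0,T,+}$; their contributions to $\delta'_{2,1}$ must be absorbed. Restricting to a single $P$-eigenspace $\mathcal{Z}_q$ replaces $P$ by $\lambda_q$ so that $i_+ h_{0,+\infty} u_0$ is a well-defined $L^2$ element, while the hypothesis $n \neq 0$ or $m > 0$ is precisely what lemma \ref{computation} needs to dominate the smooth weights $O(\sqrt{\Delta_r})$ by terms of $h_{0,+\infty}$ that would otherwise degenerate at the horizons. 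The conclusion for $\mathcal{E}^{fin,n}_{+\infty}$ follows immediately by linearity, since $\mathcal{E}^{fin,n}_{+\infty}$ is a finite direct sum of the $\mathcal{E}^{q,n}_{+\infty}$.
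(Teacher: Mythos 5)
Your proof is essentially correct, but it reaches the conclusion by a different logical route than the paper. The paper never verifies the explicit domain characterization for a general $u$: it establishes the graph-norm estimate $\left\|i_+u\right\|_{graph(\dot{H}_{T,+})} \leq C(q)\left\|u\right\|_{graph(\dot{H}^n_{+\infty})}$ on the smooth compactly supported dense subset $(C^{\infty}_0)^2\cap\mathcal{E}^{q,n}_{+\infty}$ (using the same ingredients you use: lemma \ref{boundedness3} and inequality \ref{sansPoids} of lemma \ref{computation} applied to the commutator $\delta'$), and then concludes by the density lemma \ref{densityFin} together with the closedness of $\dot{H}_{T,+}$. You instead check directly, for an arbitrary $u\in Dom(\dot{H}^n_{+\infty})\cap\mathcal{E}^{q,n}_{+\infty}$, the three membership conditions defining $Dom(\dot{H}_{T,+})$, via the distributional identity $h_{0,T,+}(i_+u_0)=i_+h_{0,+\infty}u_0+i\delta'_{2,1}u_0$ (which is correct, since $k_{+\infty}=k_{T,+}=-l_+$). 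Your route has the advantage of avoiding the density-plus-closedness step, but it leans on two things the paper does not spell out: first, the analogous explicit description of $Dom(\dot{H}^n_{+\infty})$ (only implicit via \cite{GGH}), which you need both for $u_0\in Dom(h_{0,+\infty})$ and for $u_1+l_+u_0\in H^1_x$; second, you invoke lemma \ref{computation} for data that do not a priori satisfy its stated hypothesis $i_+u\in Dom(\dot{H}_{T,+})$ --- which is literally the conclusion of the present lemma, so as written this is circular. The circularity is harmless because inequality \ref{sansPoids} is really a statement about the explicit first-order expression $\delta'_{2,1}u_0=f(x)\partial_xu_0+g_q(x)u_0$ and extends by density to all $u_0$ with $h_{0,+\infty}^{1/2}u_0\in L^2$, but you should say so explicitly (or restrict lemma \ref{computation} to smooth data and pass to the limit), rather than cite the lemma verbatim. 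The paper's arrangement --- smooth data first, then density and closedness --- is precisely designed to sidestep both of these domain subtleties.
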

\begin{proof}
For $u\in (C^{\infty}_0 (\R \times \mathbb{S}^2))^2 \cap  \mathcal{E}^{q, n}_{+\infty}$,
we compute
\begin{align*}
\left\|i_+ u \right\|^2_{\dot{\mathcal{E}}^n_{T,+}} + \left\|\dot{H}_{T,+}i_+u\right\|^2_{\dot{\mathcal{E}}^n_{T,+}} \leq & \left\|i_+ u \right\|^2_{\dot{\mathcal{E}}^n_{T,+}} + 2\left\|i_+\dot{H}^n_{+\infty}u\right\|^2_{\dot{\mathcal{E}}^n_{T,+}} + 2\left\|\delta' u\right\|^2_{\dot{\mathcal{E}}^n_{T,+}}
\end{align*}
%
%

The first two terms can be bounded using lemma \ref{boundedness3}. The lemma \ref{computation} (inequality number \ref{sansPoids}) gives $\left\|\delta' u\right\|^2_{\dot{\mathcal{E}}^n_{T,+}}\leq \left\|u\right\|^2_{\dot{\mathcal{E}}^n}$.
Therefore we get
\[
\left\|i_+u\right\|_{graph(\dot{H}_{T,+})} \leq C(q) \left\|u\right\|_{graph(\dot{H}^n_{+\infty})}
\]
where $\|.\|_{graph(\dot{H}_{T,+})}$ (resp. $\|.\|_{graph(\dot{H}^n_{+\infty})}$) denotes the graph norm associated with the operator $\dot{H}_{T,+}$ (resp. $\dot{H}^n_{+\infty}$).
We can use the second part of the density lemma \ref{densityFin} to conclude the proof.
\end{proof}

\subsection{Existence of the inverse wave operators}
\begin{theorem}
There exists $a_0>0$ such that, for all $|a|<a_0$ and for all $n\neq 0$,
the following strong limits exist in $\mathcal{B}(\dot{\mathcal{E}}^n, \dot{\mathcal{E}}^n_{T,\pm})$:
\[
\Omega_{T, \pm}=s-\lim\limits_{t\to \infty} e^{-it\dot{H}^n_{T,\pm}}i_+^2 e^{it\dot{H}^n}
\]
\end{theorem}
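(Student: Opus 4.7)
My plan is to mirror the strategy of Theorem \ref{existenceDirect} in reverse: combine the inverse wave operator between $\dot{\mathcal{E}}^n$ and $\dot{\mathcal{E}}^n_{+\infty}$ provided by Theorem 12.2 of \cite{GGH} with a new intermediate inverse wave operator between $\dot{\mathcal{E}}^n_{+\infty}$ and $\dot{\mathcal{E}}^n_{T,+}$. Using the factorisation
\[
e^{-it\dot{H}^n_{T,+}} i_+^2 e^{it\dot{H}^n} = \bigl(e^{-it\dot{H}^n_{T,+}} i_+ e^{it\dot{H}^n_{+\infty}}\bigr)\bigl(e^{-it\dot{H}^n_{+\infty}} i_+ e^{it\dot{H}^n}\bigr),
\]
the second factor converges strongly in $\mathcal{B}(\dot{\mathcal{E}}^n,\dot{\mathcal{E}}^n_{+\infty})$ by the cited GGH result, while the first factor is uniformly bounded in $t$, since $e^{it\dot{H}^n_{+\infty}}$ and $e^{it\dot{H}^n_{T,+}}$ act unitarily on their respective homogeneous energy spaces and $i_+:\dot{\mathcal{E}}^n_{+\infty}\to\dot{\mathcal{E}}^n_{T,+}$ is bounded by Lemma \ref{boundedness3}. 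By the standard composition principle for strongly convergent families under uniform boundedness, it therefore suffices to prove strong convergence of $g(t):=e^{-it\dot{H}^n_{T,+}} i_+ e^{it\dot{H}^n_{+\infty}}$ on a dense subspace of $\dot{\mathcal{E}}^n_{+\infty}$.

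I would then apply Cook's method on the dense subspace $D\subset \mathrm{Dom}(\dot{H}^n_{+\infty})\cap \mathcal{E}^{fin,n}_{+\infty}\cap (C^\infty_0(\R\times\mathbb{S}^2))^2$, whose density follows from the appendix's density lemmas. For $v\in D$, Lemma \ref{domain} places $i_+ e^{it\dot H^n_{+\infty}}v$ in $\mathrm{Dom}(\dot H_{T,+})$, so $g(t)v$ is strongly differentiable in $\dot{\mathcal{E}}^n_{T,+}$ with
\[
\tfrac{d}{dt}g(t)v = e^{-it\dot{H}^n_{T,+}}\,\delta'\, e^{it\dot{H}^n_{+\infty}}v,\qquad \delta'=i\bigl(i_+\dot{H}^n_{+\infty}-\dot{H}_{T,+}i_+\bigr).
\]
Combining unitarity of $e^{-it\dot H^n_{T,+}}$ on $\dot{\mathcal{E}}^n_{T,+}$ with inequality \ref{avecPoids} of Lemma \ref{computation} then yields
\[
\Bigl\|\tfrac{d}{dt}g(t)v\Bigr\|_{\dot{\mathcal{E}}^n_{T,+}}\leq C_q\,\bigl\|h_{0,+\infty}^{1/2}\sqrt{q(r)}\,u_0(t)\bigr\|_{L^2},
\]
where $u_0(t)$ denotes the first component of $e^{it\dot H^n_{+\infty}}v$.

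The main obstacle is the time-integrability of this weighted norm. Within each eigenspace $\mathcal{Z}_q$ the change of unknown $u_0 = e^{-il_+ t}v_q$ reduces the $+\infty$ dynamics to the 1D wave equation $\partial_t^2 v_q - \partial_x^2 v_q + V_q(x)v_q = 0$, where $V_q(x)=\Delta_r\bigl(\tfrac{\lambda_q}{\lambda^2(a^2+r^2)^2}+m^2\bigr)$ is nonnegative, smooth and exponentially decaying as $x\to\pm\infty$; the weight $\sqrt{q(r)}$ itself also decays exponentially at both ends. For data in $D$ I would establish a local energy decay estimate for this 1D problem, e.g.\ by deriving a Kirchhoff-type representation analogous to Proposition \ref{kirchoff} and exploiting finite speed of propagation together with the exponential decay of $\sqrt{q(r)}$ inside the forward and backward light cones, to obtain an integrable (in fact exponentially decaying in $t$) bound on $\bigl\|h_{0,+\infty}^{1/2}\sqrt{q(r)}\,u_0(t)\bigr\|_{L^2}$. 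The assumption $n\neq 0$ or $m^2 > 0$ enters precisely here by making $V_q$ genuinely positive away from the horizons, thereby excluding a zero-energy obstruction to the decay. Once this integrable bound is in hand, the Cauchy criterion produces the strong limit of $g(t)v$ in $\dot{\mathcal{E}}^n_{T,+}$ for each $v\in D$, and the factorisation above extends the convergence to all of $\dot{\mathcal{E}}^n$. The case of $\Omega_{T,-}$ is treated symmetrically.
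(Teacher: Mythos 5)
Your overall architecture matches the paper's: you factor through $\dot{\mathcal{E}}^n_{+\infty}$ using Theorem 12.2 of \cite{GGH} together with the uniform boundedness from Lemma \ref{boundedness3}, reduce to the intermediate family $e^{-it\dot{H}^n_{T,+}}i_+e^{it\dot{H}^n_{+\infty}}$, apply Cook's method on a dense set of finite angular data using Lemma \ref{domain} to justify differentiability, and control the derivative by inequality \ref{avecPoids} of Lemma \ref{computation}. Up to that point you are reproducing the paper's proof. The divergence --- and the gap --- is in how you establish the time-integrability of $\bigl\|h_{0,+\infty}^{1/2}\sqrt{q(r)}\,u_0(t)\bigr\|_{L^2}$. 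The paper first inserts a spectral cutoff, proving convergence of $F(t)\chi(\dot{H}^n_{+\infty})u$ for $\chi\in C^\infty_0(\R)$, precisely because the weighted propagation estimate it then invokes (the analogue of Proposition 6.7 of \cite{GGH}, resting on Lemma 13.2 of that paper, which is where the hypothesis $n\neq 0$ actually enters) is an estimate on $\sqrt{q(r)}\,e^{it\dot{H}_{+\infty}}\chi(\dot{H}_{+\infty})\sqrt{q(r)}$ with an energy cutoff; the cutoff is removed only afterwards by density and uniform boundedness. You drop the cutoff and propose to prove the decay yourself.

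Your substitute mechanism does not work as described. After the reduction $u_0=e^{-il_+t}v_q$ the equation is $\partial_t^2 v_q-\partial_x^2 v_q+V_q v_q=0$ with $V_q\geq 0$ smooth and exponentially decaying, but there is no Kirchhoff/d'Alembert representation once $V_q\not\equiv 0$, and finite speed of propagation only confines the solution inside the light cone --- it gives an upper bound on the support, not a lower bound on how fast energy leaves a compact set. A potential reflects and temporarily traps part of the wave, and integrable (let alone exponential) local energy decay for $-\partial_x^2+V_q$ is a genuine spectral statement: one needs a limiting absorption principle or meromorphic continuation of the cutoff resolvent with no real resonances, including at zero energy. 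Your remark that positivity of $V_q$ ``excludes a zero-energy obstruction'' identifies the right issue but is not a proof, and it is exactly the content of the \cite{GGH} machinery the paper cites. As written, the Cook integrand is not shown to be integrable, so the existence of the limit is not established. To close the gap you should either reinstate the spectral cutoff and invoke the \cite{GGH} propagation estimate as the paper does, or supply an actual proof of integrable local energy decay for the one-dimensional wave equation with potential $V_q$.
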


\begin{remark}
The limitation $n\neq 0$ comes from lemma 13.2 of \cite{GGH} that we will need. However, the case $n=0$ with $m^2>0$ is covered by the same method.
\end{remark}

\begin{proof}
Thanks to theorem 12.2 of \cite{GGH} (and the uniform boundedness proved in lemma \ref{boundedness3}), it is enough to prove the existence of the limit of 
\[
s-\lim\limits_{t\to \infty} e^{-it\dot{H}^n_{T,+}}i_+ e^{it\dot{H}^n_{+\infty}}
\]

We define the notation $F(t):=  e^{-it\dot{H}^n_{T,+}}i_+ e^{it\dot{H}^n_{+\infty}}$
We begin by proving the convergence of $F(t)\chi(\dot{H}^n_{+\infty})u$ for $u \in (C^{\infty}_0 (\R\times \mathbb{S}^2))^2\cap \mathcal{E}^{fin,n}_{+\infty}$ and $\chi \in C^{\infty}_0(\R)$.
By linearity, it is enough to prove it for $u \in (C^{\infty}_0 (\R\times \mathbb{S}^2))^2\cap \mathcal{E}^{q,n}_{+\infty}$. 
Moreover, $\chi(\dot{H}^n_{+\infty})$ and $e^{it\dot{H}^n_{+\infty}}$ preserve $Dom(\dot{H}^n_{+\infty})$ and $\mathcal{E}^{q,n}_{+\infty}$. So we deduce $e^{it\dot{H}^n_{+\infty}}\chi(\dot{H}^n_{+\infty})u \in Dom(\dot{H}^n_{+\infty}) \cap \mathcal{E}^{q,n}_{+\infty}$. So thanks to the lemma \ref{domain}, $i_+ e^{it\dot{H}^n_{+\infty}} \in Dom(e^{-it\dot{H}^n_{T,+}})$ for all $t\in \R$. As a consequence we can compute (with the notation $\delta'$ from lemma \ref{computation})
\[
\frac{\dd }{\dd t}F(t)\chi(\dot{H}^n_{+\infty})u = e^{-it\dot{H}^n_{T,+}}\delta' e^{it\dot{H}^n_{+\infty}}\chi(\dot{H}^n_{+\infty})u
\]
Because, $\dot{H}_{T,+}$ is selfadjoint, we can write
\begin{align*}
\left\|\frac{\dd }{\dd t}F(t)\chi(\dot{H}^n_{+\infty})u\right\|_{\dot{\mathcal{E}}^n_{T,+}} &= \left\|\delta' e^{it\dot{H}^n_{+\infty}}\chi(\dot{H}^n_{+\infty})u\right\|_{\dot{\mathcal{E}}^n_{T,+}} \\
&\leq C_q\left\|\sqrt{q(r)} e^{it\dot{H}^n_{+\infty}}\chi(\dot{H}^n_{+\infty})u\right\|_{\dot{\mathcal{E}}^n_{+\infty}}\\
&\leq \left\|\sqrt{q(r)} e^{it\dot{H}^n_{+\infty}}\chi(\dot{H}^n_{+\infty})\sqrt{q(r)}\right\|_{\mathcal{B}(\dot{\mathcal{E}}_{+\infty})}\left\|q(r)^{-\frac{1}{2}}u\right\|_{\dot{\mathcal{E}}^n_{+\infty}}
 \end{align*}
where the first inequality is obtained by lemma \ref{computation} inequality number \ref{avecPoids}.
Because, $u\in (C^{\infty}_0 (\R\times \mathbb{S}^2))^2 $ we have $\left\|q(r)^{-\frac{1}{2}}u\right\|_{\dot{\mathcal{E}}^n_{+\infty}}<+\infty$.
Then we can use the analogous of lemma 6.7 of \cite{GGH} for $H_{+\infty}$ 
to show that $\left\|\frac{\dd}{\dd t}F(t)\chi(\dot{H}^n_{+\infty})u\right\|_{\dot{\mathcal{E}}^n_{T,+}}$ is integrable and therefore, the limit exists.

Using the lemma \ref{boundedness3} (uniform boundedness) and a density argument, we recover the limit for $u\in \dot{\mathcal{E}}^n$. 

\end{proof}

\section{Construction and properties of the global wave operators}
\subsection{Construction of the global wave operators}
In the following sections, we fix $n\in \Z$ and assume that $n\neq 0$ or $m>0$.
\subsection{Left and right spaces}
\begin{definition}
We define the following subspaces of $\dot{\mathcal{E}}_{T,+}$ and $\dot{\mathcal{E}}_{T,-}$.
\[ \mathcal{E}^{r}_{T,+} = \left\{ \begin{pmatrix}u_0 \\ u_1 \end{pmatrix} \in \dot{\mathcal{E}}^n_{T,+} : \frac{w_+}{i}u_0 + u_1 = 0 \right\} \]
\[
\mathcal{E}^{l}_{T,+} = \left\{ \begin{pmatrix}u_0 \\ u_1 \end{pmatrix} \in \dot{\mathcal{E}}^n_{T,+} : \frac{\tilde{w}_-}{i}u_0 + u_1 = 0 \right\}
\]
\[ \mathcal{E}^{r}_{T,-} = \left\{ \begin{pmatrix}u_0 \\ u_1 \end{pmatrix} \in \dot{\mathcal{E}}^n_{T,+} : \frac{\tilde{w}_+}{i}u_0 + u_1 = 0 \right\} \]
\[ \mathcal{E}^{l}_{T,-} = \left\{ \begin{pmatrix}u_0 \\ u_1 \end{pmatrix} \in \dot{\mathcal{E}}^n_{T,+} : \frac{w_-}{i}u_0 + u_1 = 0 \right\}
\]
\end{definition}

\begin{remark}
\label{defExt}
Functions like $u\mapsto\frac{w_+}{i}u_0 + u_1$ are well defined from $\mathcal{E}_{T,+}$ to $L^2(\R\times\mathbb{S}^2)$  and the equality
\begin{equation}
\frac{w_+}{i}u_0 + u_1 := \frac{1}{i}(\partial_x + i(l-l_+))u_0 + (u_1 + l_+u_0)
\end{equation}
shows that they admit a unique continuous extension from $\dot{\mathcal{E}}_{T,+}$ to $L^2(\R\times\mathbb{S}^2)$ (which is given by the right-hand side).
\end{remark}

\begin{prop}
The subspaces $\mathcal{E}^{l}_{T,\pm}$ and $\mathcal{E}^{r}_{T,\pm}$ are closed.
\end{prop}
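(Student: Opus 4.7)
The plan is to observe that each of the four subspaces is defined as the kernel of a continuous linear map into $L^2(\R\times \mathbb{S}^2)$, so the result will follow from the standard fact that kernels of continuous linear maps are closed.

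Concretely, consider $\mathcal{E}^r_{T,+}$ and the map
\[
\Phi_{r,+}: \begin{pmatrix} u_0 \\ u_1 \end{pmatrix} \longmapsto \frac{w_+}{i}u_0 + u_1 = \frac{1}{i}(\partial_x + i(l-l_+))u_0 + (u_1 + l_+u_0),
\]
which by Remark \ref{defExt} is well defined and continuous from $\dot{\mathcal{E}}_{T,+}$ to $L^2(\R\times\mathbb{S}^2)$ (the first summand is bounded by $\|h_{0,T,+}^{1/2}u_0\|_{L^2}$ via Lemma \ref{injectivity}, and the second is explicitly controlled by the homogeneous norm). Then $\mathcal{E}^r_{T,+} = \dot{\mathcal{E}}^n_{T,+}\cap \ker \Phi_{r,+}$, which is closed as the intersection of a closed subspace with the preimage of $\{0\}$ under a continuous map.

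For $\mathcal{E}^l_{T,+}$ one does exactly the same with $\tilde w_- = -\partial_x - il + 2il_+$ in place of $w_+$: the key algebraic identity is
\[
\frac{\tilde{w}_-}{i}u_0 + u_1 = -\frac{1}{i}(\partial_x + i(l-l_+))u_0 + (u_1 + l_+u_0),
\]
which is again a sum of two pieces each continuous from $\dot{\mathcal{E}}_{T,+}$ into $L^2$. The two spaces $\mathcal{E}^{r}_{T,-}$ and $\mathcal{E}^{l}_{T,-}$ are handled identically by replacing $l_+$ with $l_-$ throughout and using the homogeneous norm of $\dot{\mathcal{E}}_{T,-}$.

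I do not expect any real obstacle: the only point worth being careful about is that the maps $u\mapsto w_+ u_0/i + u_1$ etc. are a priori only defined on $\mathcal{E}_{T,+}$, so one must invoke the continuous extension from $\dot{\mathcal{E}}_{T,+}$ to $L^2$ supplied by Remark \ref{defExt} before speaking of kernels. Once this is in place, closedness is automatic.
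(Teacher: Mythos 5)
Your proposal is correct and is essentially the paper's own proof: the paper also realizes $\mathcal{E}^{r}_{T,+}$ as the kernel of the continuous map $u\mapsto \frac{1}{i}(\partial_x + i(l-l_+))u_0 + (u_1 + l_+u_0)$ on $\dot{\mathcal{E}}_{T,+}$ (the extension supplied by Remark \ref{defExt}) and concludes by closedness of kernels, leaving the other three cases to the analogous identities you wrote out.
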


\begin{proof}
We treat the case of $\mathcal{E}^{r}_{T, +}$
The application
\[
\Psi:
\begin{cases}
\dot{\mathcal{E}}_{T,+} \rightarrow L^2 \\
u \mapsto \frac{1}{i}(\partial_x + i(l-l_+))u_0 + (u_1 + l_+u_0)
\end{cases}
\]
is continuous and $\mathcal{E}^{r}_{T, +} = ker \Psi$
\end{proof}

\begin{prop}
\label{decomposition}
We have the following decompositions:
\[
\dot{\mathcal{E}}^n_{T,\pm} = \mathcal{E}^{l}_{T,\pm} \oplus \mathcal{E}^{r}_{T,\pm}
\]
\end{prop}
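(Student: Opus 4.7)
The plan is to establish the decomposition first on the dense subspace $\mathcal{D}^{fin}_{T,+}$ via the Kirchhoff formula (Proposition \ref{kirchoff}) and then extend by continuity using a key norm identity; the $-$ case is entirely analogous, so I focus on $+$. Using the continuous extensions $\Phi_+, \Phi_-:\dot{\mathcal{E}}_{T,+}\to L^2(\mathbb{R}\times\mathbb{S}^2)$ of $u\mapsto \tfrac{w_+}{i}u_0 + u_1$ and $u\mapsto \tfrac{\tilde w_-}{i}u_0 + u_1$ provided by Remark \ref{defExt}, the identities $w_+ = \partial_x + il$ and $\tilde w_- = -\partial_x - il + 2il_+$ yield
\[\Phi_\pm(u) = \pm \tfrac{1}{i}(\partial_x+i(l-l_+))u_0 + (u_1 + l_+u_0),\]
so $\mathcal{E}^r_{T,+}=\ker\Phi_+$ and $\mathcal{E}^l_{T,+}=\ker\Phi_-$. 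By Lemma \ref{injectivity}, $\|\tfrac{1}{i}(\partial_x+i(l-l_+))u_0\|_{L^2}=\|h_{0,T,+}^{1/2}u_0\|_{L^2}$, so the parallelogram identity gives the key norm identity
\[\|\Phi_+(u)\|_{L^2}^2 + \|\Phi_-(u)\|_{L^2}^2 = 2\|u\|_{\dot{\mathcal{E}}_{T,+}}^2.\]
Immediately, any $u \in \mathcal{E}^l_{T,+}\cap\mathcal{E}^r_{T,+}$ satisfies $u=0$, so the intended sum is direct.

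For existence on $\mathcal{D}^{fin}_{T,+}$, evaluating Proposition \ref{kirchoff} at $t=0$ (where the integral condition built into $\mathcal{D}^{fin}_{T,+}$ ensures $\tilde u_1\in C_0^\infty$) yields the explicit splitting $u = v+w$ with
\[v := \tfrac{1}{2}\begin{pmatrix}u_0-\tilde u_1\\ -\tfrac{\tilde w_-}{i}(u_0-\tilde u_1)\end{pmatrix},\qquad w := \tfrac{1}{2}\begin{pmatrix}u_0+\tilde u_1\\ -\tfrac{w_+}{i}(u_0+\tilde u_1)\end{pmatrix}.\]
The relations $\tfrac{\tilde w_-}{i}v_0 + v_1 = 0$ and $\tfrac{w_+}{i}w_0 + w_1 = 0$ are built into the formulas, so $v\in\mathcal{E}^l_{T,+}$ and $w\in\mathcal{E}^r_{T,+}$.

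For general $u\in\dot{\mathcal{E}}^n_{T,+}$, pick $u^k\in \mathcal{D}^{fin}_{T,+}$ converging to $u$ in $\dot{\mathcal{E}}^n_{T,+}$ (density, Lemma \ref{density3}) and write $u^k = v^k + w^k$ as above. By linearity and $\Phi_+(w^k-w^{k'})=0$, we have $\Phi_+(v^k - v^{k'}) = \Phi_+(u^k - u^{k'})$; together with $\Phi_-(v^k - v^{k'}) = 0$, the key norm identity applied to $v^k - v^{k'}$ gives
\[\|v^k - v^{k'}\|_{\dot{\mathcal{E}}_{T,+}}^2 = \tfrac{1}{2}\|\Phi_+(u^k - u^{k'})\|_{L^2}^2 \leq \|u^k - u^{k'}\|_{\dot{\mathcal{E}}_{T,+}}^2,\]
so $(v^k)$ and $(w^k)$ are Cauchy with limits $v,w$. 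By closedness (previous proposition) $v\in\mathcal{E}^l_{T,+}$ and $w\in\mathcal{E}^r_{T,+}$, and $v+w=u$ by continuity. The main subtlety lies in the norm identity: one cannot identify $(D_x+(l-l_+))u_0$ with $h_{0,T,+}^{1/2}u_0$ as elements of $L^2$, only as having equal $L^2$-norms (Lemma \ref{injectivity}); the remaining density and continuity arguments are routine.
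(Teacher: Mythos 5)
Your proposal is correct and follows essentially the same route as the paper: the parallelogram identity $\|\Phi_+(u)\|^2+\|\Phi_-(u)\|^2=2\|u\|^2_{\dot{\mathcal{E}}_{T,+}}$ is exactly the paper's Pythagoras lemma (Lemma \ref{pythagore}) in disguise, the explicit splitting on $\mathcal{D}^{fin}_{T,+}$ via $\tilde{u}_1$ is the same one used in the paper (where you correctly supply the factor $\tfrac{1}{2}$ that the paper's displayed formula omits), and your Cauchy-sequence extension is equivalent to the paper's observation that the orthogonal sum of the two closed subspaces is complete, hence closed. No gaps.
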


\begin{lemma}
\label{pythagore}
For all $u^{l}\in \mathcal{E}^{l}_{T,\pm}$ and $u^{r}\in\mathcal{E}^{r}_{T,\pm}$,
\[
\|u^{l}+u^{r}\|^2_{\dot{\mathcal{E}}_{T,\pm}} =\|u^{l}\|^2_{\dot{\mathcal{E}}_{T,\pm}} + \|u^{r}\|^2_{\dot{\mathcal{E}}_{T,\pm}}
\]
\end{lemma}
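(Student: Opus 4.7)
I would repackage the homogeneous norm on $\dot{\mathcal{E}}^n_{T,+}$ in terms of two continuous $L^2$-valued linear maps, each of which vanishes precisely on one of the two subspaces, and then conclude by the parallelogram identity in $L^2$.

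Concretely, I would introduce
\[ A(u) := u_1 + l_+ u_0, \qquad B(u) := (D_x + (l-l_+))u_0. \]
Both are well-defined and bounded on $\mathcal{E}_{T,+}$: $A$ by the explicit definition of the norm, and $B$ via the identity $\|(D_x+(l-l_+))u_0\|_{L^2}^2 = \langle h_{0,T,+}u_0,u_0\rangle$ established inside the proof of Lemma~\ref{injectivity}. By Remark~\ref{defExt} and Lemma~\ref{injectivity} they extend continuously to $\dot{\mathcal{E}}^n_{T,+}$ as $L^2$-valued linear maps, and the homogeneous norm can be written as
\[ \|u\|^2_{\dot{\mathcal{E}}_{T,+}} = \|A(u)\|^2_{L^2} + \|B(u)\|^2_{L^2}. \]

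Using $\frac{w_+}{i} = D_x + l$ and $\frac{\tilde{w}_-}{i} = -D_x - l + 2l_+$, a direct rewriting gives
\[ u\in\mathcal{E}^r_{T,+} \iff A(u)+B(u)=0, \qquad u\in\mathcal{E}^l_{T,+} \iff A(u)-B(u)=0, \]
so for $u^l\in\mathcal{E}^l_{T,+}$ and $u^r\in\mathcal{E}^r_{T,+}$ I would get $B(u^l)=A(u^l)$ and $B(u^r)=-A(u^r)$, and by linearity
\[ A(u^l+u^r)=A(u^l)+A(u^r), \qquad B(u^l+u^r)=A(u^l)-A(u^r). \]

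The conclusion is then one line via the parallelogram identity in $L^2$:
\[ \|u^l+u^r\|^2_{\dot{\mathcal{E}}_{T,+}} = \|A(u^l)+A(u^r)\|^2_{L^2} + \|A(u^l)-A(u^r)\|^2_{L^2} = 2\|A(u^l)\|^2_{L^2} + 2\|A(u^r)\|^2_{L^2}, \]
and this right-hand side equals $\|u^l\|^2_{\dot{\mathcal{E}}_{T,+}} + \|u^r\|^2_{\dot{\mathcal{E}}_{T,+}}$, because on each subspace $A$ and $B$ agree up to a sign, so each norm reduces to twice the corresponding $A$-contribution. The case of $\dot{\mathcal{E}}^n_{T,-}$ is handled analogously after swapping $(w_+,\tilde{w}_-,l_+)\leftrightarrow(w_-,\tilde{w}_+,l_-)$; the roles of $l$ and $r$ happen to exchange, but the computation is unchanged. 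There is no genuine obstacle: the only point worth checking carefully is that $B$ really extends continuously to the completion $\dot{\mathcal{E}}^n_{T,+}$, which is precisely the norm identity supplied by Lemma~\ref{injectivity}.
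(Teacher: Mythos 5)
Your argument is correct and is essentially the paper's own proof: the paper likewise writes the homogeneous norm as $\|(\partial_x+i(l-l_+))u_0\|^2_{L^2}+\|u_1+l_+u_0\|^2_{L^2}$, uses the defining relations of $\mathcal{E}^{l}_{T,+}$ and $\mathcal{E}^{r}_{T,+}$ to turn the first term into $\pm$ the second on each subspace, and concludes by the parallelogram identity. Your only addition is to make explicit the continuous extension of the map $u\mapsto (D_x+(l-l_+))u_0$ to the completion, which the paper handles via Remark~\ref{defExt}.
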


\begin{remark}
The subspaces are in fact orthogonal to one another with respect to the natural scalar product on $\dot{\mathcal{E}}_{T,\pm}$. 
\end{remark}

\begin{proof}[Proof of the lemma]
We treat the $+$ case. 
We use the following equalities (direct from the definition)
\begin{align*}
(\partial_x + i(l-l_+))u^{l}_0 &= i(u^{l}_1 + l_+u^{l}_0)\\
(\partial_x + i(l-l_+))u^{r}_0 &= -i(u^{r}_1 + l_+u^{r}_0)
\end{align*}
in the following computation
\begin{align*}
\|u^{l}+u^{r}\|^2_{\dot{\mathcal{E}}^n_{T,\pm}} &= \|(\partial_x + i(l-l_+))(u^{l}_0 + u^{r}_0)\|^2 + \|u^{l}_1 + l_+ u^{l}_0 + u^{r}_1+ l_+u^{r}_0\|^2\\
&= \|u^{l}_1 + l_+ u^{l}_0 - (u^{r}_1 + l_+u^{r}_0)\|^2 + \|u^{l}_1 + l_+ u^{l}_0 + u^{r}_1+ l_+u^{r}_0\|^2 \\
&= 2\|u^{l}_1 + l_+ u^{l}_0\|^2 + 2\|u^{r}_1 + l_+ u^{r}_0\|^2 \\
&= \|u^{l}\|^2_{\dot{\mathcal{E}}^n_{T,\pm}} + \|u^{r}\|^2_{\dot{\mathcal{E}}^n_{T,\pm}}
\end{align*}
\end{proof}

\begin{proof}[Proof of the proposition]
As usual, we treat only the $+$ case (the other being similar). 
The trivial intersection property follows from the lemma.
Now we prove that $\dot{\mathcal{E}}^n_{T,+} = \mathcal{E}^{l}_{T,+} + \mathcal{E}^{r}_{T,+}$.
From the lemma (and the fact that $\mathcal{E}^{l}_{T,+}$ and $\mathcal{E}^{r}_{T,+}$ are closed in a complete space, hence complete), we get that $\mathcal{E}^{l}_{T,+} + \mathcal{E}^{r}_{T,+}$ is complete, hence closed.
We remark that the space $\mathcal{D}^{fin}_{T,+}$ defined in lemma \ref{density3} is included in the sum $\mathcal{E}^{l}_{T,+} + \mathcal{E}^{r}_{T,+}$. Indeed, for $u\in \mathcal{D}^{fin}_{T,+}$, we can write:
\[ \begin{pmatrix} u_0 \\ u_1 \end{pmatrix} = \begin{pmatrix} u_0 + \tilde{u}_1 \\ -\frac{w_+}{i} (u_0 + \tilde{u}_1)\end{pmatrix} + \begin{pmatrix}u_0 - \tilde{u}_1 \\ -\frac{\tilde{w}_-}{i}(u_0 - \tilde{u}_1)\end{pmatrix}
\]
where 
\[
\tilde{u}_1 = -i\int_{-\infty}^{x}e^{-i\int_s^x (l-l_+)}(u_1+l_+u_0)(s)\dd s
\]
is smooth compactly supported. So we have $\mathcal{D}^{fin}_{T,+}\subset \mathcal{E}^l_{T,+} + \mathcal{E}^r_{T,+}$. We conclude the proof, using the density of $\mathcal{D}^{fin}_{T,+}$ (lemma \ref{density3}).
\end{proof}
\begin{remark}
\label{decompoReg}
We remark for further use that we even have $\mathcal{D}^{fin}_{T,+}\subset\mathcal{E}^l_{T,+}\cap (C^{\infty}_0)^2 + \mathcal{E}^r_{T,+}\cap (C^{\infty}_0)^2$.
\end{remark}

The comparison operator $H_{T,+}$ (resp. $H_{T,-}$) is a good approximation of $\dot{H}$ only near $r=r_+$ (resp. near $r= r_-$). That is why we have put a cutoff in the definition of the wave operators. For this reason we have to understand how to glue them together to construct global wave operators. The first step is to define the profile space, which will be the image of the global inverse wave operator.
\begin{definition}
We define the profile space as follows:
\[ \mathcal{P} = \mathcal{E}^{l}_{T,-}\oplus \mathcal{E}^{r}_{T,+} \] 
\end{definition}

\begin{definition}
We define the global direct wave operator
\[
W: \begin{cases}
\mathcal{P} \rightarrow \dot{\mathcal{E}}^n \\
(u^l, u^r) \mapsto W_{T,-}(u^l)+ W_{T,+}(u^r)
\end{cases}
\]
\end{definition}

\begin{definition}
We define the global inverse wave operator
\[
\Omega: \begin{cases}
\dot{\mathcal{E}}^n \rightarrow \dot{\mathcal{E}}^n_{T,-}\oplus\dot{\mathcal{E}}^n_{T,+}\\
u \mapsto (\Omega_{T,-}u, \Omega_{T,+}u)
\end{cases}
\]
\end{definition}

\begin{remark}
We remark that $\mathcal{P}\subset \dot{\mathcal{E}}^n_{T,-}\oplus\dot{\mathcal{E}}^n_{T,+}$. We will see later that $\mathcal{P}$ is exactly the image of $\Omega$, therefore it is the correct profile space. 
\end{remark}

\subsection{Inversion property}
\label{properties}
The goal of this section is to prove the following theorem
\begin{theorem}
\label{inversion}
$Ran(\Omega) \subset \mathcal{P}$ and 
\begin{align*}
\Omega W &= Id_{\mathcal{P}} \\
W \Omega &= Id_{\dot{\mathcal{E}}^n}
\end{align*}
\end{theorem}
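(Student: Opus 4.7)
The plan is to establish the theorem by first computing the four compositions $\Omega_{T,\pm} W_{T,\pm}$ and $\Omega_{T,\pm} W_{T,\mp}$ on smooth compactly supported data, concluding that the natural extension $\tilde W \colon \dot{\mathcal{E}}^n_{T,-}\oplus\dot{\mathcal{E}}^n_{T,+}\to\dot{\mathcal{E}}^n$ defined by the same formula as $W$ satisfies $\Omega\tilde W = P_{\mathcal{P}}$ (the canonical projection onto $\mathcal{P}$ along the complement $\mathcal{E}^r_{T,-}\oplus\mathcal{E}^l_{T,+}$), then proving $\tilde W\Omega = Id_{\dot{\mathcal{E}}^n}$ using the same device together with local energy decay from \cite{GGH}. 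The three assertions in the theorem will then follow as formal consequences.

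The central tool is a diagonal-limit trick. With $A_t := e^{-it\dot H_{T,+}} i_+^2 e^{it\dot H^n}$ (uniformly bounded in $t$ by Lemma \ref{boundedness3}) and $B_t := e^{-it\dot H^n} i_\pm^2 e^{it\dot H_{T,\pm}}$ (uniformly bounded by the Banach--Steinhaus principle applied to Theorem \ref{existenceDirect}), the decomposition $A_t B_t u = A_t(B_t u - W_{T,\pm} u) + A_t W_{T,\pm} u$ together with uniform boundedness of $A_t$ gives $\lim_t A_t B_t u = \Omega_{T,+} W_{T,\pm} u$. Hence each composition equals a diagonal limit which the Kirchhoff formula (Proposition \ref{kirchoff}) and the transport formulas of Proposition \ref{transport} make explicit on smooth compactly supported data. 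For $u \in \mathcal{E}^r_{T,+} \cap (C^\infty_0)^2$ with support in $(-R,R)$, the support of $e^{it\dot H_{T,+}}u$ lies in $(-R+t,R+t)$, so $i_+^4 \equiv 1$ on it for $t$ large and $A_t B_t u = e^{-it\dot H_{T,+}} i_+^4 e^{it\dot H_{T,+}} u = u$, giving $\Omega_{T,+}W_{T,+}u = u$. For $u \in \mathcal{E}^l_{T,+}\cap(C^\infty_0)^2$, the support of $e^{it\dot H_{T,+}}u$ is in $(-R-t, R-t)$ where $i_+$ vanishes for $t$ large, so $B_t u = 0$ eventually and $\Omega_{T,+}W_{T,+}u = 0$. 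The cross term $\Omega_{T,+}W_{T,-}u$ collapses to $\lim_t e^{-it\dot H_{T,+}} i_+^2 i_-^2 e^{it\dot H_{T,-}}u$, which vanishes for $t$ large because $i_+^2 i_-^2$ is compactly supported in $x$ while the support of $e^{it\dot H_{T,-}}u$ has escaped to $\pm\infty$. Density of $\mathcal{D}^{fin}_{T,\pm}$ in the relevant subspaces (Remark \ref{decompoReg}, Lemma \ref{density3}) and continuity of both operators extend these identities, yielding $\Omega_{T,+}W_{T,+} = P^r_+$ (projection onto $\mathcal{E}^r_{T,+}$), $\Omega_{T,-}W_{T,-} = P^l_-$, and $\Omega_{T,\pm}W_{T,\mp} = 0$, hence $\Omega\tilde W = P_{\mathcal{P}}$.

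Swapping the roles of $A_t$ and $B_t$ (and exploiting the same diagonal argument) gives $W_{T,\pm}\Omega_{T,\pm} u = \lim_t e^{-it\dot H^n} i_\pm^4 e^{it\dot H^n} u$. Summing and using the identity $i_+^2 + i_-^2 = 1$,
\begin{equation*}
\tilde W\Omega u = \lim_{t\to\infty} e^{-it\dot H^n}\bigl(i_+^4 + i_-^4\bigr) e^{it\dot H^n} u = u - 2\lim_{t\to\infty} e^{-it\dot H^n}\, i_+^2 i_-^2\, e^{it\dot H^n} u.
\end{equation*}
The remaining limit vanishes by local energy decay for $e^{it\dot H^n}$: the function $i_+^2 i_-^2$ is smooth and compactly supported in $x$, and on compact $x$-intervals the weight $q(r) = \sqrt{(r_+-r)(r-r_-)}$ is bounded below, so the $\sqrt{q(r)}$-weighted decay estimates for $\dot H^n$ (which are the direct analog for $\dot H^n$ of Lemma 6.7 of \cite{GGH}, provided by the Mourre theory underlying GGH's asymptotic completeness) yield $\|i_+^2 i_-^2 e^{it\dot H^n}u\|_{\dot{\mathcal{E}}^n}\to 0$ on a dense subspace, and the conclusion extends by uniform boundedness of the group. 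Thus $\tilde W\Omega = Id_{\dot{\mathcal{E}}^n}$. Applying $\Omega$ to this identity gives $\Omega u = \Omega\tilde W\Omega u = P_{\mathcal{P}}(\Omega u)$, so $Ran(\Omega)\subset \mathcal{P}$; restricting to $\mathcal{P}$ gives $\Omega W = \Omega\tilde W|_\mathcal{P} = P_{\mathcal{P}}|_{\mathcal{P}} = Id_\mathcal{P}$; and $W\Omega u = \tilde W\Omega u = u$ is now well-defined and equals the identity.

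The main obstacle is the vanishing of $\lim_t e^{-it\dot H^n} i_+^2 i_-^2 e^{it\dot H^n}u$. Unlike the other identities, which reduce to bookkeeping with finite propagation speed and support tracking for the simple transport dynamics, this step requires genuine dynamical information about the full Klein--Gordon evolution and has to be extracted carefully from the minimal velocity / local energy decay estimates underlying the asymptotic completeness of \cite{GGH}.
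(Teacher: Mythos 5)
Your treatment of $\Omega W = Id_{\mathcal{P}}$ follows the paper's route (the diagonal limit using the uniform boundedness of $e^{-it\dot{H}_{T,\pm}}i_\pm^2 e^{it\dot{H}^n}$ from Lemma \ref{boundedness3}, together with the Kirchhoff formula and support propagation), and that part is sound. The gap is in the step $\tilde{W}\Omega = Id_{\dot{\mathcal{E}}^n}$, on which you then make everything else rest, including $Ran(\Omega)\subset\mathcal{P}$. ``Swapping the roles of $A_t$ and $B_t$'' requires two things you do not have: (i) uniform boundedness in $t$ of $B_t = e^{-it\dot{H}^n} i_\pm^2 e^{it\dot{H}_{T,\pm}}$, and (ii) convergence of $B_t v$ as $t\to\infty$ at the specific point $v=\Omega_{T,\pm}u$, which is a general element of $\dot{\mathcal{E}}^n_{T,\pm}$ and not in the dense set where Theorem \ref{existenceDirect} establishes convergence. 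Neither is available. Banach--Steinhaus does not give (i): it needs pointwise boundedness on the whole space (or at least a non-meager set), whereas Theorem \ref{existenceDirect} only provides convergence on a dense subspace; moreover the quantitative bound in that proof, $\left\|i_+e^{it\dot{H}_{T,+}}u\right\|^2_{\dot{\mathcal{E}}^n_{+\infty}}\leq 2\left\|u\right\|^2_{\dot{\mathcal{E}}^n_{T,+}}+2C(Q)e^{-\kappa_+(t-R)}(\cdots)$, involves constants depending on the support radius $R$ and the spherical cutoff $Q$ of $u$ and becomes uniform only in the limit $t\to\infty$. The operator $W_{T,\pm}$ is defined on all of $\dot{\mathcal{E}}^n_{T,\pm}$ only by continuous extension, not as a strong limit of bounded operators; the paper explicitly flags exactly this obstruction in the remark following the lemma $W_{-\infty}\Omega_{-\infty}+W_{+\infty}\Omega_{+\infty}=Id$. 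Without (i) and (ii) the identity $W_{T,\pm}\Omega_{T,\pm}u=\lim_t e^{-it\dot{H}^n}i_\pm^4 e^{it\dot{H}^n}u$ is unjustified, and with it falls your derivation of both $W\Omega=Id$ and $Ran(\Omega)\subset\mathcal{P}$.

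The paper circumvents this by proving two independent facts and concluding formally. First, $Ran(\Omega_{T,+})\subset\mathcal{E}^r_{T,+}$ is proved directly (Proposition \ref{RanOmega}): one shows that $\Psi(i_+e^{it\dot{H}_{+\infty}}u)$ satisfies $(\partial_t+\tilde{w}_-)\Psi(i_+u(t))=f(t)$ with $\|f(t)\|_{L^2}$ integrable (via the weighted estimate of Lemma \ref{computation} and the propagation estimate for $\dot{H}_{+\infty}$), so that $\Psi(i_+u(t))$ propagates towards $r=r_-$ and, after inserting the cutoff $\widetilde{i}_+$, tends to $0$ in $L^2$; Lemma \ref{invariance} then yields $\Psi(\Omega^{+\infty}_{T,+}u)=0$. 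Second, injectivity of $\Omega$ is obtained from the chain $\|u\|_{\dot{\mathcal{E}}}\leq C(\|\Omega_{+\infty}u\|+\|\Omega_{-\infty}u\|)$ and $\|v\|_{\dot{\mathcal{E}}_{+\infty}}\leq C(\|\Omega^{+\infty}_{T,+}v\|+S(v))$ with $S\equiv 0$ on $Ran(\Omega_{+\infty})$ (Lemmas \ref{infIneq}, \ref{finIneq}, \ref{genIneq}, \ref{SRanOmega}); this is where the local-energy-decay input actually enters, through the functional $S$. Then $W\Omega=Id$ follows formally: $\Omega(W\Omega u-u)=\Omega W\Omega u-\Omega u=0$ since $\Omega u\in\mathcal{P}$. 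You would need to supply arguments of this kind for the two steps your shortcut was meant to replace.
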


The proof will be split in three main independent parts:
\begin{itemize}
\item The proof of $Ran(\Omega) \subset \mathcal{P}$.
\item The proof of $\Omega W = Id_{\mathcal{P}}$.
\item The proof of the injectivity of $\Omega$
\end{itemize}
From these three steps we deduce the theorem.

\subsubsection{First part: $Ran(\Omega) \subset \mathcal{P}$}
The claim can be split into two distinct inclusions:
\begin{prop}
\label{RanOmega}
\begin{align*}
Ran(\Omega_{T,+}) &\subset \mathcal{E}^r_{T,+} \\
Ran(\Omega_{T,-}) &\subset \mathcal{E}^l_{T,-}
\end{align*}
\end{prop}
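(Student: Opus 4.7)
The approach is to exploit the orthogonal structure implicit in the decomposition of Proposition \ref{decomposition}. The Pythagorean identity of Lemma \ref{pythagore}, applied to $u^l + \alpha u^r$ for $\alpha \in \{1, i\}$, shows that $\mathcal{E}^l_{T,+}$ and $\mathcal{E}^r_{T,+}$ are orthogonal complements in $\dot{\mathcal{E}}^n_{T,+}$, so $\mathcal{E}^r_{T,+} = (\mathcal{E}^l_{T,+})^{\perp}$. Hence the claim reduces to showing $\langle \Omega_{T,+}u, \psi\rangle_{\dot{\mathcal{E}}^n_{T,+}} = 0$ for every $u \in \dot{\mathcal{E}}^n$ and $\psi \in \mathcal{E}^l_{T,+}$. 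Using continuity of $\Omega_{T,+}$ together with Remark \ref{decompoReg} and the density of $\mathcal{D}^{fin}_{T,+}$ (pushed through the continuous orthogonal projector $\pi_l$), this reduces the problem to $u \in (C^{\infty}_0(\R \times \mathbb{S}^2))^2 \cap (\mathcal{Y}^n)^2$ and $\psi \in \mathcal{E}^l_{T,+} \cap (C^{\infty}_0)^2$.

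For such $u$ and $\psi$, self-adjointness of $\dot{H}^n_{T,+}$ gives
\[
\langle \Omega_{T,+}u, \psi\rangle_{\dot{\mathcal{E}}^n_{T,+}} = \lim_{t\to +\infty}\langle i_+^2 e^{it\dot{H}^n}u,\; e^{it\dot{H}^n_{T,+}}\psi\rangle_{\dot{\mathcal{E}}^n_{T,+}}.
\]
For $\psi \in \mathcal{E}^l_{T,+}\cap (C^{\infty}_0)^2$ a short integration by parts on $\psi_1 + l_+\psi_0 = -i(\partial_x + i(l-l_+))\psi_0$ shows that the integral condition of $\mathcal{D}^{fin}_{T,+}$ is automatically satisfied, so Proposition \ref{kirchoff} applies and identifies $e^{it\dot{H}^n_{T,+}}\psi$ as pure left-transport $\bigl(e^{-t\tilde{w}_-}\psi_0,\, e^{-t\tilde{w}_-}\psi_1\bigr)$; by Proposition \ref{transport} this is smooth with support in $(-R_\psi - t, R_\psi - t)$. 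On the other hand, smooth compactly supported Klein--Gordon data give, by Leray's theorem and finite speed of propagation, a smooth compactly supported solution, so $i_+^2 e^{it\dot{H}^n}u$ is smooth with support in $[-R_0, R_u + t]$, where $R_0 \geq 0$ satisfies $\mathrm{supp}(i_+) \subset [-R_0, +\infty)$. For $t > R_\psi + R_0$ the two supports are disjoint.

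It remains to deduce vanishing of the inner product from disjointness of supports. Using Lemma \ref{injectivity} and integration by parts, the scalar product on $\dot{\mathcal{E}}^n_{T,+}$ for smooth compactly supported $v, w$ reads
\[
\langle v, w\rangle_{\dot{\mathcal{E}}^n_{T,+}} = \bigl\langle (\partial_x + i(l-l_+)) v_0,\, (\partial_x + i(l-l_+)) w_0\bigr\rangle_{L^2} + \bigl\langle v_1 + l_+ v_0,\, w_1 + l_+ w_0\bigr\rangle_{L^2},
\]
and since $\partial_x + i(l-l_+)$ is a local first-order operator, disjoint supports in the data are preserved, so both $L^2$ products vanish. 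This proves the limit is zero; density then yields $\Omega_{T,+}u \in \mathcal{E}^r_{T,+}$ for all $u \in \dot{\mathcal{E}}^n$. The analogous claim $Ran(\Omega_{T,-})\subset \mathcal{E}^l_{T,-}$ is proved the same way after swapping $+/-$, $w_\pm \leftrightarrow w_\mp$, and $\tilde{w}_\mp \leftrightarrow \tilde{w}_\pm$. The main delicacy is the support/inner-product step, which relies on the local form of the $\dot{\mathcal{E}}^n_{T,+}$ norm.
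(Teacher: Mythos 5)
Your argument is correct, but it takes a genuinely different route from the paper's. The paper factors $\Omega_{T,+}=\Omega^{+\infty}_{T,+}\Omega_{+\infty}$ and proves the stronger inclusion $Ran(\Omega^{+\infty}_{T,+})\subset\mathcal{E}^r_{T,+}$ by tracking $\Psi(i_+e^{it\dot{H}_{+\infty}}u)$ directly on the dense set $\mathcal{A}$: a Duhamel expansion along $\partial_t+\tilde{w}_-$ produces a commutator source whose time-integrability rests on the weighted propagation estimate of \cite{GGH} (via lemma \ref{computation}), followed by an explicit transport-plus-dominated-convergence step and the invariance lemma \ref{invariance}. You instead dualize: using that $\mathcal{E}^r_{T,+}=(\mathcal{E}^l_{T,+})^{\perp}$ (lemma \ref{pythagore} together with proposition \ref{decomposition}), you move the unitary group $e^{-it\dot{H}_{T,+}}$ onto a smooth compactly supported left-mover $\psi$, observe that $e^{it\dot{H}_{T,+}}\psi$ is exact left transport (your check that the integral condition \eqref{conditionInt} holds automatically for $\psi\in\mathcal{E}^l_{T,+}\cap(C^{\infty}_0)^2$, which forces $\tilde{\psi}_1=-\psi_0$ in proposition \ref{kirchoff}, is the right computation), and kill the pairing by support separation, since the cutoff $i_+^2$ confines the other factor to $[-R_0,+\infty)$ while the left-mover escapes to $-\infty$ at unit speed. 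This avoids the propagation estimates of \cite{GGH} entirely for this proposition and is more elementary; the paper's method has the advantage of applying to $\Omega^{+\infty}_{T,+}$ on all of $\dot{\mathcal{E}}_{+\infty}$, which is the form reused in lemma \ref{SRanOmega}. Two small repairs: your support bound $[-R_0,R_u+t]$ for $i_+^2e^{it\dot{H}^n}u$ overstates the finite propagation speed (for $a\neq 0$ the level sets of $t-x$ are timelike away from the axis, so causal curves reach $|dx/dt|=(r^2+a^2)/\sigma>1$, and the honest bound is $R_u+ct$ with $c>1$) --- harmless here, since your disjointness only uses the fixed left endpoint $-R_0$ coming from $\mathrm{supp}(i_+)$; and the density of $(C^{\infty}_0)^2\cap\mathcal{E}^l_{T,+}$ in $\mathcal{E}^l_{T,+}$ is not literally lemma \ref{densityLR} (which treats $\mathcal{E}^r_{T,+}$ and $\mathcal{E}^l_{T,-}$), though your projection argument through remark \ref{decompoReg} does supply it.
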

As usual we only prove the $+$ case, the other being very similar.
In the following we use the already introduced notation
\[
\Psi:
\begin{cases}
\dot{\mathcal{E}}_{T,+} \rightarrow L^2 \\
u \mapsto \frac{1}{i}w_+ u_0 + u_1
\end{cases}
\]
and we recall that $\Psi$ is continuous (see remark \ref{defExt})

We also recall that $ker(\Psi) = \mathcal{E}^{r}_{T,+}$. For this reason it is interesting to study the $L^2$-norm of $\Psi$ along the dynamics.
\begin{lemma}
\label{invariance}
For all $t$ in $\mathbb{R}$ and all $u$ in $\dot{\mathcal{E}}_{T,+}$,
\[
\|\Psi(e^{it\dot{H}_{T,+}}u)\|_{L^2} = \|\Psi(u)\|_{L^2}
\]
In other words, the $L^2$ norm of $\Psi$ is preserved along the comparison dynamics $e^{itH_{T,+}}$.
\end{lemma}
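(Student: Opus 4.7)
The approach is to exploit the factorization of the outgoing dynamics. Since $w_+$ and $\tilde{w}_-$ commute by construction, the equation $(\partial_t + w_+)(\partial_t + \tilde{w}_-)u_0 = 0$ can also be read as $(\partial_t + \tilde{w}_-)v = 0$ for $v := (\partial_t + w_+)u_0$. Using $\partial_t u_0 = iu_1$ from the first-order form of the dynamics, one sees that $v = w_+ u_0 + i u_1 = i\,\Psi(u(t))$. Therefore $\Psi(u(t))$ is transported by $e^{-t\tilde{w}_-}$, which is unitary on $L^2(\R\times\mathbb{S}^2)$ because $-\tilde{w}_-/i$ is self-adjoint (the self-adjointness used in the definition of this propagator). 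This yields the conservation of $\|\Psi(u(t))\|_{L^2}$.

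To make this rigorous without manipulating unbounded operators on general elements of $\dot{\mathcal{E}}_{T,+}$, I would first establish the stronger identity $\Psi(e^{it\dot{H}_{T,+}}u) = e^{-t\tilde{w}_-}\Psi(u)$ on the dense subspace $\mathcal{D}^{fin}_{T,+}$. For such $u$, the Kirchoff formula of proposition \ref{kirchoff} gives the explicit expression of $u(t) = e^{it\dot{H}_{T,+}}u$. Plugging this into $\Psi(u(t)) = \tfrac{w_+}{i}u_0(t) + u_1(t)$, the contributions involving $e^{-tw_+}(u_0 + \tilde{u}_1)$ cancel and a short calculation gives
\[
\Psi(u(t)) = \tfrac{1}{2i}(w_+ - \tilde{w}_-)\, e^{-t\tilde{w}_-}(u_0 - \tilde{u}_1) = \tfrac{1}{2i}\, e^{-t\tilde{w}_-} (w_+ - \tilde{w}_-)(u_0 - \tilde{u}_1),
\]
where the second equality uses the commutation of $w_+$ with $\tilde{w}_-$ (hence with $e^{-t\tilde{w}_-}$) on smooth compactly supported data. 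Evaluating at $t = 0$ identifies the factor after $e^{-t\tilde{w}_-}$ as $\Psi(u)$, so $\Psi(u(t)) = e^{-t\tilde{w}_-}\Psi(u)$, and unitarity of $e^{-t\tilde{w}_-}$ on $L^2$ yields $\|\Psi(u(t))\|_{L^2} = \|\Psi(u)\|_{L^2}$.

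For the extension to arbitrary $u \in \dot{\mathcal{E}}_{T,+}$, I would use a density argument: $\mathcal{D}^{fin}_{T,+}$ is dense in $\dot{\mathcal{E}}^n_{T,+}$ by lemma \ref{density3}, the map $\Psi : \dot{\mathcal{E}}_{T,+} \to L^2$ is continuous (remark \ref{defExt}), and $e^{it\dot{H}_{T,+}}$ is a $C^0$-group on $\dot{\mathcal{E}}^n_{T,+}$. Consequently $u \mapsto \|\Psi(e^{it\dot{H}_{T,+}}u)\|_{L^2}$ and $u \mapsto \|\Psi(u)\|_{L^2}$ are both continuous on $\dot{\mathcal{E}}_{T,+}$, so the identity extends from the dense subspace to the whole space.

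The main technical delicate point is the commutation of $w_+$ and $e^{-t\tilde{w}_-}$ in the explicit computation; this is legitimate precisely because the intermediate objects $u_0 \pm \tilde{u}_1$ belong to $C^\infty_0$ (using the integral condition \eqref{conditionInt} to ensure $\tilde{u}_1$ has compact support), which reduces everything to a pointwise calculation.
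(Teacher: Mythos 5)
Your proposal is correct and follows essentially the same route as the paper: reduce to the dense set $\mathcal{D}^{fin}_{T,+}$ by continuity of $\Psi$ and $\Psi\circ e^{it\dot{H}_{T,+}}$, use the Kirchoff formula to see that only the $e^{-t\tilde{w}_-}$-transported part (i.e.\ $u_0-\tilde{u}_1$, which is $2u^l_0$ in the paper's left/right decomposition) contributes to $\Psi$, commute the first-order factor $w_+-\tilde{w}_-=2(\partial_x+i(l-l_+))$ through $e^{-t\tilde{w}_-}$, and invoke unitarity of $e^{-t\tilde{w}_-}$ on $L^2$. The explicit intertwining identity $\Psi(e^{it\dot{H}_{T,+}}u)=e^{-t\tilde{w}_-}\Psi(u)$ you record is a slightly sharper statement than the paper's norm equality, but it rests on the same computation.
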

\begin{remark}
In particular $e^{it\dot{H}_{T,+}}$ preserves $\mathcal{E}^{r}_{T,+}$.
\end{remark}
\begin{proof}
By continuity of $\Psi$ and $\Psi\circ e^{it\dot{H}_{T,+}}$, it is enough to prove the result on the dense subset $D^{fin}_{T,+}$ defined in lemma \ref{density3} because we know (see remark \ref{decompoReg}) that every element $u\in D^{fin}_{T,+}$ can be written as
\[
u = u^l+u^r
\]
with $u^l \in (C^{\infty}_0)^2 \cap \mathcal{E}^l_{T,+}$ and $u^r\in (C^{\infty}_0)^2 \cap \mathcal{E}^l_{T,+}$. 
 Moreover thanks to the Kirchoff formula, we have the explicit expression
\[
e^{it\dot{H}_{T,+}}u = \begin{pmatrix} e^{-t\tilde{w}_-}u^l_0 + e^{-tw_+}u^r_0 \\ -\frac{\tilde{w}_-}{i}e^{-t\tilde{w}_-}u^l_0 - \frac{w_+}{i}e^{-tw_+}u^r_0 \end{pmatrix}
\]
Then we compute
\begin{align*}
\|\Psi(e^{it\dot{H}_{T,+}}u)\|_{L^2} &= \| \frac{1}{i}(w_+-\tilde{w}_-)e^{-it\tilde{w}_-}u^l_0 \|_{L^2} \\
& = \| 2(\partial_x + i(l-l_+))e^{-it\tilde{w}_-}u^l_0 \|_{L^2}
\end{align*}
But $2(\partial_x + i(l-l_+))$ commutes with $\tilde{w}_-$ and so with $e^{-t\tilde{w}_-}$. Finally, using that$\frac{1}{i}\tilde{w}_-$ is selfadjoint on $L^2$ (and therefore $e^{-t\tilde{w}_-}$ is unitary), we get
\begin{align*}
\|\Psi(e^{it\dot{H}_{T,+}}u)\|_{L^2} &= \|2(\partial_x + i(l-l_+))u^l_0 \|_{L^2} \\
&= \| \frac{w_+}{i} u^l_0 + u^l_1\|_{L^2}
\end{align*}
\end{proof}

As in \cite{GGH}, we introduce the weight $w:= \frac{1}{\sqrt{(r-r_-)(r_+-r)}}$.
\begin{definition}
We define the set $\mathcal{A} = \left\{ \chi (\dot{H}_{+\infty})w^{-\epsilon} u, u \in \dot{\mathcal{E}}^{fin,n}_{+\infty}, \chi \in C^{\infty}_0 \right\}$, which correspond to data with good propagation property with respect to the dynamics associated to $\dot{H}_{+\infty}$.
\end{definition}
$\mathcal{A}$ is dense in $\dot{\mathcal{E}}_{+\infty}$ (see lemma \ref{densityA} for details).

\begin{remark}
Because $\dot{H}_{+\infty}$ commutes with $P$, we have $\mathcal{A}\subset \dot{\mathcal{E}}^{fin,n}_{+\infty} \cap Dom(H_{+\infty})$
\end{remark}

Finally we state a technical lemma which is useful for the computations.
\begin{lemma}
\label{tech}
The application $\Psi_{|Dom(\dot{H}_{T,+})}$ is a continuous application from $Dom(\dot{H}_{T,+})$ to $Dom(\tilde{w}_-)$. So for $u\in Dom(\dot{H}_{T,+})$, $\tilde{w}_-\Psi(u)$ is well defined (in the distribution sense) and belongs to $L^2$. Moreover, we have:
\[
\tilde{w}_- \Psi(u) = -i((\dot{H}_{T,+}u)_1 + l_+ (\dot{H}_{T,+}u)_0) - (\partial_x + i(l-l_+))(\dot{H}_{T,+}u)_0
\]
\end{lemma}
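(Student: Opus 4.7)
The proof relies on two algebraic identities from the very construction of $\tilde{w}_-$ in Definition \ref{defOut}: the operators $\tilde{w}_-$ and $w_+$ commute, and their product equals $h_{T,+}$. The plan is: first, verify the identity pointwise for smooth compactly supported data; second, prove the continuity estimate; third, extend everything by density to $Dom(\dot{H}_{T,+})$.

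For the computation on $u \in (C^\infty_0(\R\times\mathbb{S}^2))^2$, start from $\Psi(u) = -iw_+ u_0 + u_1$ and push $\tilde{w}_-$ through: since $[\tilde{w}_-, w_+] = 0$ and $\tilde{w}_- w_+ = h_{T,+}$, one gets $\tilde{w}_-\Psi(u) = -ih_{T,+}u_0 + \tilde{w}_- u_1$. Substituting the identifications $u_1 = (\dot{H}_{T,+} u)_0$ and $h_{T,+}u_0 = (\dot{H}_{T,+}u)_1 + 2l_+(\dot{H}_{T,+}u)_0$, together with the decomposition $\tilde{w}_- = -(\partial_x + i(l-l_+)) + il_+$, the two $il_+(\dot{H}_{T,+}u)_0$ contributions cancel, leaving precisely the formula stated in the lemma.

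For continuity, recall from Lemma \ref{injectivity} that the squared homogeneous energy of $v \in \dot{\mathcal{E}}_{T,+}$ decomposes as $\|v_1+l_+ v_0\|^2_{L^2} + \|(\partial_x+i(l-l_+))v_0\|^2_{L^2}$. Remark \ref{defExt} already yields $\|\Psi(u)\|_{L^2} \le \sqrt{2}\|u\|_{\dot{\mathcal{E}}_{T,+}}$. Applying the same decomposition to $v = \dot{H}_{T,+} u$ controls each of the two summands in the formula by $\|\dot{H}_{T,+} u\|_{\dot{\mathcal{E}}_{T,+}}$, and hence $\|\tilde{w}_-\Psi(u)\|_{L^2} \le 2\|\dot{H}_{T,+}u\|_{\dot{\mathcal{E}}_{T,+}}$. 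Together these two bounds show that $\Psi$ is continuous from $Dom(\dot{H}_{T,+})$ with the graph norm into $Dom(\tilde{w}_-)$.

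To conclude, approximate an arbitrary $u \in Dom(\dot{H}_{T,+})$ by a sequence $u^{(k)} \in (C^\infty_0)^2$ in the graph norm — this is possible by combining the isometry $L$ of Proposition \ref{isometryHom}, Remark \ref{remIsometryHom}, and the density of $C^\infty_0$ in $\mathcal{H}^1$ furnished by Lemma \ref{density}. The right-hand side of the formula applied to $u^{(k)}$ converges to that applied to $u$ in $L^2$ by the continuity estimate, while $\tilde{w}_-\Psi(u^{(k)})$ converges to $\tilde{w}_-\Psi(u)$ in the distribution sense since $\Psi(u^{(k)})\to\Psi(u)$ in $L^2$. Uniqueness of distributional limits yields $\tilde{w}_-\Psi(u) \in L^2$ along with the stated identity. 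The only delicate point is this last density step: one must confirm that $(C^\infty_0)^2$ is a core for $\dot{H}_{T,+}$ on the homogeneous space $\dot{\mathcal{E}}_{T,+}$, which is obtained by transporting through $L$ the well-known density of $C^\infty_0\oplus C^\infty_0$ in $\mathcal{H}^1\oplus\mathcal{H}^1$.
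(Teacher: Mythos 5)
Your proof is correct and follows essentially the same route as the paper's: establish the identity on $(C^{\infty}_0)^2$ via the algebra $\tilde{w}_-w_+=h_{T,+}$ and $[\tilde{w}_-,w_+]=0$ (the paper runs the identical computation, merely starting from the right-hand side), then pass to general $u\in Dom(\dot{H}_{T,+})$ by graph-norm approximation and uniqueness of distributional limits. One small caveat: the density of $(C^{\infty}_0)^2$ in $Dom(\dot{H}_{T,+})$ for the graph norm is not literally obtained by ``transporting through $L$'' the density of $C^{\infty}_0\oplus C^{\infty}_0$ in $\mathcal{H}^1\oplus\mathcal{H}^1$, since $L^{-1}$ does not send compactly supported smooth pairs to compactly supported smooth pairs; the paper instead invokes its Lemma \ref{density2}, which proves this core property through the equivalence of the graph norm with the $\mathcal{H}^2\oplus\mathcal{H}^1$ norm on the inhomogeneous domain.
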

\begin{proof}
let $u \in (C^{\infty}_0)^2$. 
\begin{align}
\MoveEqLeft[4] -i((\dot{H}_{T,+}u)_1 + l_+ (\dot{H}_{T,+}u)_0) -  (\partial_x + i(l-l_+))(\dot{H}_{T,+}u)_0 &= -i(\tilde{w}_-w_+u_0 -2l_+u_1 + l_+u_1)- (\partial_x + i(l-l_+))u_1 \notag\\
&= \tilde{w}_-\frac{w_+}{i}u_0 + (il_+ - \partial_x -il+il_+)u_1 \notag\\
&= \tilde{w}_-\left(\frac{w_+}{i}u_0 + u_1\right) \notag\\
&=\tilde{w}_-\Psi(u) \label{calc}
\end{align}

If we take a general $u\in Dom(\dot{H}_{T,+})$, then by density we can find $(u_n)$ a sequence of functions in $(C^{\infty}_0)^2$ converging towards $u$ in the graph norm (see for example the much stronger lemma \ref{density2}). By continuity of $\Psi$,
\[
\Psi (u_n) \xrightarrow[n\to +\infty]{} \Psi(u)
\]
In $L^2$, therefore in the distribution sense. We deduce that
\[
\tilde{w}_- \Psi (u_n) \xrightarrow[n\to +\infty]{} \tilde{w}_-\Psi(u)
\]
in the distribution sense. Finally, the continuity of the left hand side of \eqref{calc} from $Dom(H_{+\infty})$ to $L^2$, we get that $\lim\limits_{n \to +\infty}\tilde{w}_- \Psi (u_n) = ((\dot{H}_{T,+}u)_1 + l_+ (\dot{H}_{T,+}u)_0) +  (\partial_x + i(l-l_+))(\dot{H}_{T,+}u)_0$ in $L^2$ (and therefore in the distribution topology). We conclude by uniqueness of the limit in the distribution topology.
\end{proof}

We can now prove the proposition \ref{RanOmega}.
\begin{proof}[Proof of proposition \ref{RanOmega}]
We begin by some reductions.
We recall (see the proof of existence of $\Omega_{T,+}$) that $\Omega_{T,+} = \Omega_{T,+}^{+\infty} \Omega_{+\infty}$ where
\begin{align*}
\Omega_{T,+}^{+\infty} &= s-\lim\limits_{t\to +\infty} e^{-it\dot{H}_{T,+}}i_+e^{it\dot{H}^n_{+\infty}} \\
\Omega_{+\infty} &= s-\lim\limits_{t \to +\infty} e^{-it\dot{H}^n_{+\infty}}i_+ e^{it\dot{H}^n}
\end{align*} 
So it is enough to prove that $Ran(\Omega_{T,+}^{+\infty}) \subset \mathcal{E}^r_{T,+}$.
Because $\mathcal{E}^r_{T,+}$ is closed in $\dot{\mathcal{E}}_{T,+}$ and $\Omega_{T,+}^{+\infty}$ is continuous, it is enough to prove that $\Omega_{T,+}^{+\infty}(\mathcal{A}) \subset \mathcal{E}^r_{T,+}$. The reduction to $\mathcal{A}$ enables to use the propagation estimate (as we do later).



\vspace{0.5cm}
Let $u$ be in $\mathcal{A}$. We denote by $Q$ an integer such that $u \in \oplus_{q\leq Q} \mathcal{\dot{E}}^{q,n}_{+\infty}$. Using the remark after lemma \ref{densityA}, we get $e^{it\dot{H}^n_{+\infty}}u \in Dom(\dot{H}_{+\infty})\cap \oplus_{q\leq Q}\dot{\mathcal{E}}^{q,n}_{+\infty}$. The lemma \ref{domain} gives us that $i_+e^{it\dot{H}^n_{+\infty}}u \in Dom(\dot{H}_{T,+})$.
We can apply the lemma \ref{tech} and get $\Psi(i_+e^{it\dot{H}_{+\infty}}u) \in  Dom(\tilde{w}_-)$. We also remark that $t\mapsto \Psi(i_+ e^{it\dot{H}_{+\infty}} u)$ is in $C^1(\mathbb{R}; L^2(\mathbb{R}\times \mathbb{S}^2))$ (using $u\in Dom(\dot{H}_{+\infty})$).
To alleviate the notations we define $u(t):= e^{it\dot{H}^n_{+\infty}}u$ and by $u^q(t)$ the orthogonal projection of $u(t)$ on $\dot{\mathcal{E}}^{q,n}_{+\infty}$.
We can compute (see the proof of \ref{tech} for the $\tilde{w}_-$ computation)
\begin{align*}
(\partial_t + \tilde{w}_-)\Psi(i_+u(t)) =& \Psi(i_+ i\dot{H}_{+\infty} u(t)) - i((\dot{H}_{T,+}i_+u(t))_1 + l_+ (\dot{H}_{T,+}i_+u(t))_0) \notag \\
&- (\partial_x + i(l-l_+))(\dot{H}_{T,+}i_+u(t))_0 
\end{align*}
\vspace{0.5cm}
We now show that for all $v \in Dom(\dot{H}_{+\infty})\cap \dot{\mathcal{E}}^{q,n}_{+\infty}$,
\begin{align}
\label{eqtech1}
\Psi(i_+ i\dot{H}_{+\infty} v) - i((\dot{H}_{T,+}i_+v)_1 + l_+ (\dot{H}_{T,+}i_+v)_0)- (\partial_x + i(l-l_+))(\dot{H}_{T,+}i_+v)_0 &= \delta_{2,1}'v_0  
\end{align}
We recall that $\delta_{2,1}':= i(i_+ h_{+\infty} - h_{T,+}i_+)$ was defined in lemma \ref{computation}.
We emphasize the fact that the condition $v \in Dom(\dot{H}_{+\infty})\cap \dot{\mathcal{E}}^{q,n}_{+\infty}$ implies that both sides of \eqref{eqtech1} belongs to $L^2$.  
Both sides are continuous with respect to the graph norm of $\dot{H}_{+\infty}$ (for the left-hand side we use the estimation in the proof of lemma \ref{domain}). Then by density (see lemma \ref{densityFin}), it is enough to prove it for $v\in (C^{\infty}_0)^2 \cap \dot{\mathcal{E}}^{q,n}_{+\infty}$. 
After this reduction, we can compute without paying attention to domains

\begin{align*}
\MoveEqLeft \Psi(i_+ iH_{+\infty} v) - i((H_{T,+}i_+v)_1
+ l_+ (H_{T,+}i_+v)_0)- (\partial_x + i(l-l_+))(H_{T,+}i_+v)_0 &= 
(\partial_x+il)(i_+ v_1) + ii_+h_{+\infty}v_0\\ &\phantom{{}={}} - 2ii_+l_+ v_1 
- i (\partial_x + il)(-\partial_x-il+2il_+) (i_+v_0)\\ 
&\phantom{{}={}}+ 2il_+ i_+ v_1 -i l_+i_+v_1 - (\partial_x + i(l-l_+))(i_+v_1) \\[0.3cm]
&= i \left(i_+h_{+\infty}v_0 - (\partial_x + il)(-\partial_x -il+2l_+)(i_+v_0)\right)\\[0.3cm]
&= \delta_{2,1}'v_0 
\end{align*}
where $\delta'_{2,1}$ is defined in lemma \ref{computation}. \\
Then we use the equality for each $u^q(t)$ and we get
\begin{align*}
f(t):&=(\partial_t + \tilde{w}_-)\Psi(i_+u(t)) \\
&=  \sum_{q\leq Q} \delta_{2,1}' u^q_0(t)
\end{align*}
Remark that $\delta_{2,1}'$ and $\sqrt{q(r)}$ commute with $P$ (and thus the image of an eigenfunction of $P$ is still an eigenfunction associated with the same eigenvalue).
We use inequality \ref{avecPoids} of lemma \ref{computation} and the $L^2$-orthogonality of eigenspaces of $P$ in the following computation:
\begin{align*}
\left\|f(t)\right\|^2_{L^2}&\leq \sum_{q\leq Q}C_q\left\|h_{0,+\infty}^{\frac{1}{2}}\sqrt{q(r)}u^q_0\right\|^2_{L^2} \\
&\leq \max_{q\leq Q}C_q \left\|\sqrt{q(r)}u\right\|^2_{\dot{\mathcal{E}}_{+\infty}}
\end{align*}
Because $u\in\mathcal{A}$, we can use the propagation estimate for $\dot{H}^n_{+\infty}$ (the equivalent of inequality (6.9) in proposition 6.7 of \cite{GGH}). We get
\[
\int_{0}^{+\infty}\|f(t)\|_{L^2}\dd t <+\infty
\]
The interpretation of this integrability is that $\Psi(i_+u(t))$ almost propagates towards $\left\{r=r_-\right\}$.
\vspace{0.5cm}
We now consider $g(t) := e^{t\tilde{w}_-}\Psi(i_+u(t))$. Let $h>0$.
\begin{align*}
\frac{g(t+h)-g(t)}{h} = \frac{(e^{(t+h)\tilde{w}_-}-e^{t\tilde{w}_-})}{h}\Psi(i_+u(t)) + e^{(t+h)\tilde{w}_-}\left(\frac{\Psi(i_+u(t+h))-\Psi(i_+u(t))}{h}\right)
\end{align*}
The first term has a limit in $L^2$ when $h\rightarrow 0$ because $\Psi(i_+u(t))\in Dom(\tilde{w}_-)$ (by lemma \ref{tech}). The second term also has a limit because $e^{t\tilde{w}_-}$ is uniformly bounded on $L^2$ (even unitary) and $t\mapsto \Psi(i_+u(t))$ is in $C^1(\mathbb{R}_t;L^2)$ (already seen in this proof).
We deduce that $g$ is differentiable and we can compute the derivative:
\[\partial_t g(t) = e^{t\tilde{w}_-}(\partial_t \Psi(i_+u(t)) + \tilde{w}_- \Psi(i_+u(t))) = e^{t\tilde{w}_-}f(t) \]
With the initial condition, we deduce 
\[
g(t) = \int_0^{t} e^{s\tilde{w}_-}f(s)\dd s + \Psi(i_+u(0))
\]
Finally we get
\begin{align*}
\Psi(i_+u(t)) &= e^{-t\tilde{w}_-}\left(\int_0^{t} e^{s\tilde{w}_-}f(s)\dd s + \Psi(i_+u(0))\right)\\
&= e^{-t\tilde{w}_-}\left(\int_0^{+\infty} e^{s\tilde{w}_-}f(s)\dd s + \Psi(i_+u(0))\right) + o_{L^2}(1)
\end{align*}
In other words, $\Psi(i_+u(t))$ propagates towards $\left\{ r= r_- \right\}$ up to an error converging to $0$ for large $t$. We denote by $p:= \left(\int_0^{+\infty} e^{s\tilde{w}_-}f(s)\dd s + \Psi(i_+u(0))\right) \in L^2$

\vspace{0.5cm}
Now we can use this propagation to prove that $\Psi(i_+u(t))$ tends to zero in $L^2$. We define $\widetilde{i}_+$ such that $\widetilde{i}_+i_+ = i_+$ and $\widetilde{i}_+ = 0$ in a neighborhood of $r_-$ and $1$ on a neighborhood of $r_+$.
\begin{align*}
\Psi(i_+u(t)) &= \Psi(\widetilde{i}_+i_+u(t)) \\
&= \widetilde{i}_+ \Psi(i_+u(t)) + \partial_x (\widetilde{i}_+) i_+ u_0(t)\\
&= \widetilde{i}_+ \Psi(i_+u(t))
\end{align*}
because $\widetilde{i}_+ = 1$ on the support of $i_+$.
To conclude, we only have to show in $L^2$:
\[ \lim\limits_{t\to +\infty}  \widetilde{i}_+ e^{-t\tilde{w}_-}p = 0 \]
We use the lemma \ref{transport}:
\begin{align*}
\|\widetilde{i}_+ e^{-t\widetilde{w}_-}p\|^2_{L^2} &= \int_{\mathbb{R}} \widetilde{i}^2_+(x)|p(x+t)|^2 \dd x\\
&= \int_{\mathbb{R}} \widetilde{i}^2_+(x-t)|p(x)|^2 \dd x
\end{align*}
We check that $\forall x \in \R, \lim\limits_{t\to +\infty} \widetilde{i}^2_+(x-t)|p(x)|^2 = 0$ and 
 $\forall x \in \R, \lim\limits_{t\to +\infty} \widetilde{i}^2_+(x-t)|p(x)|^2 \leq |p(x)|^2$ with $|p|^2 \in L^1(\mathbb{R})$. So by Lebesgue domination theorem, we conclude 
\[
\lim\limits_{t \to +\infty}\|\Psi(i_+u(t))\|_{L^2} = 0
\]
\vspace{0.5cm}
finally, we use lemma \ref{invariance} to write
\[
\lim\limits_{t\to +\infty}\| \Psi(e^{-itH_{T,+}}i_+u(t))\|_{L^2} = 0
\]
and by continuity of $\Psi$, 
\[
\Psi(\Omega_{T,+}^{+\infty}u)=0
\]
which gives $\Omega_{T,+}^{+\infty}u \in \mathcal{E}^{r}_{T,+}$.
\end{proof}

\subsubsection{Second part: $\Omega W = id_{\mathcal{P}}$}
We first split the equality into smaller pieces. Let $(u^l,u^r)\in \mathcal{P}$.
\[
\Omega W(u^l, u^r) = (\Omega_{T,-} W_{T,-} u^l + \Omega_{T,-}W_{T,+} u^r, \Omega_{T,+}W_{T,-} u^l + \Omega_{T,+}W_{T,+} u^r)
\]
Therefore it is enough to prove the following proposition
\begin{prop}
For all $u^r\in \mathcal{E}^{r}_{T,+}$ and for all $u^l\in \mathcal{E}^{l}_{T,-}$,
\begin{align*}
\Omega_{T,+}W_{T,+} (u^r) &= u^r \\
\Omega_{T,+}W_{T,-} (u^l) &= 0 \\
\Omega_{T,-}W_{T,+}(u^r) &=0 \\
\Omega_{T,-}W_{T,-}(u^l) &= u^l
\end{align*}
\end{prop}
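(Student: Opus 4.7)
The four identities split into two ``diagonal'' cases and two ``off-diagonal'' cases. I will treat $\Omega_{T,+}W_{T,+}u^r = u^r$ and $\Omega_{T,+}W_{T,-}u^l = 0$; the remaining two identities follow by the obvious $+\leftrightarrow -$ symmetry. In both cases the strategy is: (i) reduce to smooth compactly supported data where the Kirchhoff formula of proposition \ref{kirchoff} is available; (ii) exploit the explicit support of the solution to trivialize a product of cutoffs; (iii) handle the interchange of the two limits via an elementary algebraic identity combined with the uniform boundedness of $F_+(t):=e^{-it\dot{H}^n_{T,+}}i_+^2 e^{it\dot{H}^n}$ from $\dot{\mathcal{E}}^n$ to $\dot{\mathcal{E}}^n_{T,+}$ provided by lemma \ref{boundedness3}.

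For $u^r \in \mathcal{E}^r_{T,+}\cap(C^\infty_0)^2$, a direct integration by parts (using the defining condition $\frac{w_+}{i}u_0+u_1=0$) shows $\tilde{u}_1=u_0$, so the compatibility condition \eqref{conditionInt} is automatic, and the Kirchhoff formula collapses to a pure outgoing transport
\[
e^{it\dot{H}_{T,+}}u^r \;=\; \left(e^{-tw_+}u_0,\; -\tfrac{w_+}{i}e^{-tw_+}u_0\right),
\]
whose support, by proposition \ref{transport}, is contained in $(-R+t, R+t)\times \mathbb{S}^2$ whenever $\mathrm{supp}(u_0)\subset (-R,R)$. For $t$ large this is contained in the region where $i_+\equiv 1$, so $i_+^4 e^{it\dot{H}_{T,+}}u^r = e^{it\dot{H}_{T,+}}u^r$. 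By the same computation applied to $H_{T,-}$, for $u^l \in \mathcal{E}^l_{T,-}\cap(C^\infty_0)^2$ the dynamics $e^{it\dot{H}_{T,-}}u^l$ is a pure incoming transport supported in $(-R-t,R-t)\times \mathbb{S}^2$, so $i_+^2 i_-^2\, e^{it\dot{H}_{T,-}}u^l = 0$ for $t$ large, since $i_+^2 i_-^2$ is compactly supported in $x$.

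Now I combine these. For each $t\in\mathbb{R}$, adding and subtracting $F_+(t)e^{-it\dot{H}^n}i_\pm^2 e^{it\dot{H}_{T,\pm}}\phi$ and using $e^{it\dot{H}^n}e^{-it\dot{H}^n}=\mathrm{Id}$ yields the identity
\[
F_+(t)\,W_{T,\pm}\phi \;=\; e^{-it\dot{H}^n_{T,+}}i_+^2 i_\pm^2\, e^{it\dot{H}_{T,\pm}}\phi \;+\; F_+(t)\bigl[W_{T,\pm}\phi - e^{-it\dot{H}^n}i_\pm^2 e^{it\dot{H}_{T,\pm}}\phi\bigr].
\]
In the bracket, the expression converges to $0$ in $\dot{\mathcal{E}}^n$ by the very definition of $W_{T,\pm}$, and since $F_+(t)$ is uniformly bounded from $\dot{\mathcal{E}}^n$ to $\dot{\mathcal{E}}^n_{T,+}$, the second right-hand side term tends to $0$ in $\dot{\mathcal{E}}^n_{T,+}$. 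For the first term, the support argument of the previous paragraph gives, for $t$ large enough: with $\phi = u^r$ and sign $+$, it equals $e^{-it\dot{H}^n_{T,+}}e^{it\dot{H}_{T,+}}u^r = u^r$; with $\phi = u^l$ and sign $-$, it is identically zero. Passing to the limit $t\to\infty$ yields $\Omega_{T,+}W_{T,+}u^r = u^r$ and $\Omega_{T,+}W_{T,-}u^l = 0$ for smooth compactly supported data.

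It remains to extend the identities from $\mathcal{E}^r_{T,+}\cap (C^\infty_0)^2$ (resp. $\mathcal{E}^l_{T,-}\cap (C^\infty_0)^2$) to the full closed subspaces. For any $v\in \mathcal{E}^r_{T,+}$, pick $u_n \to v$ in $\mathcal{D}^{fin}_{T,+}$ (dense by lemma \ref{density3}) and decompose $u_n = u^l_n + u^r_n$ according to remark \ref{decompoReg}; the Pythagorean identity of lemma \ref{pythagore} applied to $u_n - v = u^l_n + (u^r_n - v) \in \mathcal{E}^l_{T,+}\oplus \mathcal{E}^r_{T,+}$ forces $u^r_n \to v$ in $\dot{\mathcal{E}}^n_{T,+}$, while $u^r_n \in \mathcal{E}^r_{T,+}\cap (C^\infty_0)^2$. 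Continuity of $\Omega_{T,+}W_{T,\pm}$ then passes the identities to the limit. The main non-routine point in the whole argument is the interchange of the two successive limits defining $\Omega_{T,+}W_{T,\pm}$; the algebraic decomposition in the previous paragraph isolates a ``diagonal'' term handled by support propagation from a remainder killed by the uniform boundedness of $F_+(t)$, which turns out to be the cleanest way to carry out that interchange.
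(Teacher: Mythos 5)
Your proposal is correct and follows essentially the same route as the paper: reduce by density (via the Pythagorean projection onto $\mathcal{E}^{r}_{T,+}$, resp.\ $\mathcal{E}^{l}_{T,-}$) to smooth compactly supported data, use the Kirchhoff formula to see that the comparison dynamics is pure transport so that the cutoff products become $1$ (resp.\ $0$) for large $t$, and absorb the limit interchange into the uniform boundedness of $e^{-it\dot{H}^n_{T,+}}i_+^2e^{it\dot{H}^n}$. The only difference is cosmetic: you verify $\tilde{u}_1=u_0$ explicitly and write out the error term that the paper's telescoping step leaves implicit.
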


We need a density lemma first
\begin{lemma}
\label{densityLR}
$(C^{\infty}_0)^2\cap \mathcal{E}^{r}_{T,+}$ is dense in $\mathcal{E}^{r}_{T,+}$.
$(C^{\infty}_0)^2 \cap \mathcal{E}^{l}_{T,-}$ is dense in $\mathcal{E}^{l}_{T,-}$.
\end{lemma}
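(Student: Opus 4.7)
Both statements are symmetric, so I focus on density of $(C^\infty_0)^2 \cap \mathcal{E}^r_{T,+}$ in $\mathcal{E}^r_{T,+}$. The plan is to combine three ingredients already established: density of $\mathcal{D}^{fin}_{T,+}$ in $\dot{\mathcal{E}}^n_{T,+}$ (Lemma \ref{density3}, together with the continuous dense inclusion $\mathcal{E}^n_{T,+} \hookrightarrow \dot{\mathcal{E}}^n_{T,+}$), the explicit splitting provided by Remark \ref{decompoReg} — which states that $\mathcal{D}^{fin}_{T,+} \subset \bigl(\mathcal{E}^l_{T,+} \cap (C^\infty_0)^2\bigr) + \bigl(\mathcal{E}^r_{T,+} \cap (C^\infty_0)^2\bigr)$ — and the Pythagoras identity of Lemma \ref{pythagore}.

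Given $u \in \mathcal{E}^r_{T,+}$, pick a sequence $u^{(k)} \in \mathcal{D}^{fin}_{T,+}$ with $u^{(k)} \to u$ in $\dot{\mathcal{E}}^n_{T,+}$. For each $k$, apply Remark \ref{decompoReg} to split $u^{(k)} = u^{l,(k)} + u^{r,(k)}$ with $u^{l,(k)} \in \mathcal{E}^l_{T,+} \cap (C^\infty_0)^2$ and $u^{r,(k)} \in \mathcal{E}^r_{T,+} \cap (C^\infty_0)^2$. Write
\[
u^{(k)} - u = (u^{r,(k)} - u) + u^{l,(k)},
\]
noting that $u^{r,(k)} - u \in \mathcal{E}^r_{T,+}$ (since both $u$ and $u^{r,(k)}$ lie in this closed subspace) and $u^{l,(k)} \in \mathcal{E}^l_{T,+}$. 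Lemma \ref{pythagore} then yields
\[
\|u^{(k)} - u\|^2_{\dot{\mathcal{E}}^n_{T,+}} = \|u^{r,(k)} - u\|^2_{\dot{\mathcal{E}}^n_{T,+}} + \|u^{l,(k)}\|^2_{\dot{\mathcal{E}}^n_{T,+}}.
\]
Since the left-hand side tends to $0$, each summand on the right does, and in particular $u^{r,(k)} \to u$ in $\mathcal{E}^r_{T,+}$. The density of $(C^\infty_0)^2 \cap \mathcal{E}^l_{T,-}$ in $\mathcal{E}^l_{T,-}$ follows from the same argument after exchanging the roles of $(w_+, \tilde w_-)$ with $(w_-, \tilde w_+)$.

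There is no genuine obstacle here: everything is assembled from previously established results. The deepest input is the Kirchhoff-type formula (Proposition \ref{kirchoff}) underlying Remark \ref{decompoReg}, but that machinery is already in hand. The only small point to verify is that the integral vanishing condition built into the definition of $\mathcal{D}^{fin}_{T,+}$ is exactly what ensures $\widetilde{u_1^{(k)}}$ has compact support, so that both pieces $u^{l,(k)}$ and $u^{r,(k)}$ genuinely lie in $(C^\infty_0)^2$.
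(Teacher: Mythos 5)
Your argument is correct and is essentially the paper's own proof: the paper also reduces to the density of $\mathcal{D}^{fin}_{T,\pm}$ (Lemma \ref{density3}), uses the splitting of Remark \ref{decompoReg}, and invokes Lemma \ref{pythagore} to see that the projection onto $\mathcal{E}^{r}_{T,+}$ parallel to $\mathcal{E}^{l}_{T,+}$ is continuous — which is exactly the estimate you write out explicitly for $u^{(k)}-u$. No substantive difference.
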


\begin{proof}
It is a corollary of the stronger density result, lemma \ref{density3}. Indeed, in the proof of proposition \ref{decomposition} we see $ \pi_r(\mathcal{D}^{fin}_{T,+}) \subset (C^{\infty}_0)^2 \cap \mathcal{E}^{r}_{T,+}$ where $\pi_r$ is the projection on $\mathcal{E}^{r}_{T,+}$ parallel to $\mathcal{E}^{l}_{T,+}$. This projection is continuous by the lemma \ref{pythagore}. The second density can be done in the same way.
\end{proof}

\begin{proof}[Proof of the proposition]
We only prove the first two equalities (the other being very similar). By density, we can prove them for $u^l \in (C^{\infty}_0)^2 \cap \mathcal{E}^{l}_{T,-}$ and $u^r \in (C^{\infty}_0)^2 \cap \mathcal{E}^{r}_{T,+}$.
We know (by theorem \ref{existenceDirect}) that in this case we have:
\begin{align*}
W_{T,+} u^r &= \lim\limits_{t\to +\infty} e^{-it\dot{H}^n}i_+ e^{it\dot{H}_{T,+}} u^r \\
W_{T,-} u^l &= \lim\limits_{t\to +\infty} e^{-it\dot{H}^n}i_- e^{it\dot{H}_{T,-}} u^l
\end{align*}
The fact that $e^{-it\dot{H}_{T,+}}i_+e^{it\dot{H}^n}$ is uniformly bounded (with respect to $t$) and that ${\Omega_+ = s-\lim\limits_{t \to +\infty} e^{-it\dot{H}_{T,+}}i_+e^{it\dot{H}^n}}$ enables to write:
\begin{align*}
\Omega_{T,+}W_{T,+} u^r &= \lim\limits_{t\to +\infty} e^{-it\dot{H}_{T,+}}i_+e^{it\dot{H}^n}e^{-it\dot{H}^n}i_+ e^{it\dot{H}_{T,+}} u^r = e^{-it\dot{H}_{T,+}}i_+^2 e^{it\dot{H}_{T,+}}u^r\\
\Omega_{T,+}W_{T,-} u^l &= \lim\limits_{t\to +\infty} e^{-it\dot{H}_{T,+}}i_+e^{it\dot{H}^n}e^{-it\dot{H}^n}i_- e^{it\dot{H}_{T,-}} u^l = e^{-it\dot{H}_{T,+}}i_+i_- e^{itH_{T,-}}u^l
\end{align*}
We can apply the propositions \ref{kirchoff} (and the version for $\dot{H}_{T,-}$) and \ref{transport} to find:
\begin{align*}
e^{it\dot{H}_{T,+}}u^r(x) &= \begin{pmatrix}e^{i\int_{x}^{x-t}l(s)\dd s}u^r_0(x-t) \\ -\frac{w_+}{i}e^{i\int_{x}^{x-t}l(s)\dd s}u^r_0(x-t)\end{pmatrix} \\
e^{it\dot{H}_{T,-}}u^l(x) &= \begin{pmatrix}e^{-i\int_{x}^{x+t}l(s)\dd s}u^l_0(x+t) \\ -\frac{\tilde{w}_-}{i}e^{-i\int_{x}^{x+t}l(s)\dd s}u^l_0(x+t)\end{pmatrix}
\end{align*}
We deduce that for $t$ large enough,
\begin{align*}
i_+^2 e^{it\dot{H}_{T,+}}u^r &= e^{it\dot{H}_{T,+}}u^r \\
i_+i_- e^{it\dot{H}_{T,-}}u^l &= 0
\end{align*}
We conclude the proof by density.
\end{proof}

\subsubsection{Third part: Injectivity of $\Omega$}
In this part, we need a version of the propagation estimate for general $u$. We obtain it as a corollary of proposition 6.7 of \cite{GGH} by a density argument.

\begin{lemma}[Propagation result for general $u$]
\label{propEstGen}
For all $\epsilon >0$.
For all $u\in \dot{\mathcal{E}}_{\pm \infty}$,
\[
\lim\limits_{t\to +\infty} \|w^{-\epsilon}e^{it\dot{H}_{\pm\infty}} u\|_{\dot{\mathcal{E}}_{\pm \infty}} = 0
\]
We have the same result for the $\dot{H}$ dynamics.
\end{lemma}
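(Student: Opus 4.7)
The plan is to promote the propagation estimate of Proposition 6.7 of \cite{GGH}, which (by construction) yields the conclusion on a dense subclass $\mathcal{D} \subset \dot{\mathcal{E}}_{\pm\infty}$ of sufficiently regular data (such as $\mathcal{A}$, or finite-eigenspace cutoffs of $(C^\infty_0)^2$), to the whole homogeneous energy space via a standard three-$\epsilon$ approximation argument. The crucial ingredient is the uniform-in-$t$ boundedness of the family of operators $u \mapsto w^{-\epsilon} e^{it\dot{H}_{\pm\infty}}u$ on $\dot{\mathcal{E}}_{\pm\infty}$.

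First I would check that the multiplication operator $M_{w^{-\epsilon}}$ is bounded from $\dot{\mathcal{E}}^n_{\pm\infty}$ to itself. Since $w^{-\epsilon} = ((r-r_-)(r_+-r))^{\epsilon/2}$ is smooth and bounded on $\mathbb{R}_x$ with $\partial_x w^{-\epsilon} = \beta(r)\, w^{-\epsilon}$ for some bounded $\beta$, the commutator with $\partial_x$ is itself a bounded multiplication operator, so
\[
\|\partial_x(w^{-\epsilon}u_0)\|_{L^2} \leq \|w^{-\epsilon}\|_{\infty}\|\partial_x u_0\|_{L^2} + C\|w^{-\epsilon} u_0\|_{L^2},
\]
and the last term is in turn controlled by the potential part of the energy norm ($\|\sqrt{\Delta_r}\, m\, u_0\|_{L^2}$ if $m>0$, or $\langle \frac{\Delta_r}{\lambda^2(r^2+a^2)^2} P u_0,u_0\rangle$ if $n\neq 0$) thanks to the comparison $w^{-\epsilon}\lesssim \Delta_r^{\epsilon/2}$, together with the uniform positivity of $P$ on $\mathcal{Y}^n$ for $n\neq 0$. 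The remaining component $\|w^{-\epsilon}(u_1 + l_\pm u_0)\|_{L^2}$ is trivially bounded by $\|w^{-\epsilon}\|_\infty$ times the second piece of the energy norm. Combined with the unitarity of $e^{it\dot{H}_{\pm\infty}}$ on $\dot{\mathcal{E}}_{\pm\infty}$ (self-adjointness of $\dot{H}_{\pm\infty}$), this gives a constant $C$ independent of $t$ with $\|w^{-\epsilon} e^{it\dot{H}_{\pm\infty}}u\|_{\dot{\mathcal{E}}_{\pm\infty}}\leq C\|u\|_{\dot{\mathcal{E}}_{\pm\infty}}$.

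Second, using density of $\mathcal{D}$ (lemma \ref{densityA} gives density of $\mathcal{A}$ already) and the validity of the conclusion on $\mathcal{D}$ provided by Proposition 6.7 of \cite{GGH}, one closes the argument: given $\delta>0$, pick $u_n \in \mathcal{D}$ with $\|u-u_n\|_{\dot{\mathcal{E}}_{\pm\infty}}<\delta/(2C)$, then
\[
\|w^{-\epsilon}e^{it\dot{H}_{\pm\infty}}u\|_{\dot{\mathcal{E}}_{\pm\infty}} \leq \|w^{-\epsilon}e^{it\dot{H}_{\pm\infty}}(u-u_n)\|_{\dot{\mathcal{E}}_{\pm\infty}} + \|w^{-\epsilon}e^{it\dot{H}_{\pm\infty}}u_n\|_{\dot{\mathcal{E}}_{\pm\infty}},
\]
the first term being $\leq \delta/2$ uniformly in $t$ and the second $\leq \delta/2$ for $t$ large. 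For the statement with $\dot{H}$ in place of $\dot{H}_{\pm\infty}$, the same scheme applies: Proposition 6.7 of \cite{GGH} is formulated directly in this setting as well, and all the above estimates carry over verbatim (the weight $w$ and the comparison of potentials are the same, and $\dot{H}^n$ generates a uniformly bounded $C^0$-group on $\dot{\mathcal{E}}^n$).

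The main obstacle is really the first step: because the norm on $\dot{\mathcal{E}}_{\pm\infty}$ is \emph{homogeneous}, there is no free $L^2$-term, so every commutator produced by sliding $w^{-\epsilon}$ past the differential operators must be absorbed into the energy via the potential $\Delta_r m^2 + \frac{\Delta_r}{\lambda^2(r^2+a^2)^2}P$. This is precisely why the standing assumption ``$n\neq 0$ or $m>0$'' is used; without it, the constant $C$ above would blow up and the density argument would break down.
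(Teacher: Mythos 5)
Your overall architecture --- uniform boundedness of $u\mapsto w^{-\epsilon}e^{it\dot{H}_{\pm\infty}}u$ on the homogeneous energy space, the propagation estimate of Proposition 6.7 of \cite{GGH} on a dense subclass, and a three-$\epsilon$ closure --- is exactly the route the paper intends (it states the lemma as a corollary of Proposition 6.7 of \cite{GGH} ``by a density argument'' and gives no further detail). However, one step of your boundedness argument fails. To absorb the commutator term $\|(\partial_x w^{-\epsilon})u_0\|_{L^2}\lesssim\|w^{-\epsilon}u_0\|_{L^2}$ into the energy, you invoke $w^{-\epsilon}\lesssim\Delta_r^{\epsilon/2}$ and then claim control by the potential term $\|\sqrt{\Delta_r}\,m\,u_0\|_{L^2}$ (or its $P$-analogue). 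Since $w^{-\epsilon}=((r-r_-)(r_+-r))^{\epsilon/2}\sim\Delta_r^{\epsilon/2}$, this requires the pointwise comparison $\Delta_r^{\epsilon/2}\le C\,\Delta_r^{1/2}$, which holds only for $\epsilon\ge 1$: as $\Delta_r\to 0$ at the horizons the ratio $\Delta_r^{(\epsilon-1)/2}$ blows up when $0<\epsilon<1$. The lemma is asserted for all $\epsilon>0$, and the small-exponent case is precisely the one used downstream (Lemma \ref{propFirstComp} applies this lemma with $\epsilon/2$ in place of $\epsilon$), so this is not a harmless restriction.

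The correct mechanism is the Hardy-type inequality of Lemma \ref{HardyIneq}: $w^{-\epsilon}$ decays exponentially in $x$ at both ends (by the estimates $|r_+-r|\le e^{-\kappa_+x}$, $|r-r_-|\le e^{\kappa_-x}$), so
\[
\|w^{-\epsilon}u_0\|_{L^2}\le C\left(\|\partial_x u_0\|_{L^2}+\|u_0\|_{L^2(-1,1)}\right),
\]
and the term $\|u_0\|_{L^2(-1,1)}$ is then controlled by the potential part of the energy because $\Delta_r\ge c>0$ for $x\in[-1,1]$, using $m>0$ or the strict positivity of $P$ on $\mathcal{Y}^n$ for $n\neq 0$ --- this is where the standing hypothesis enters, as you correctly anticipated, and it is exactly the estimate the paper records in the remark following Lemma \ref{propFirstComp}. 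With that substitution your proof closes: the identity $\partial_x w^{-\epsilon}=\beta(r)\,w^{-\epsilon}$ with $\beta$ bounded, the commutation of $w^{-\epsilon}$ with $P$, the unitarity of $e^{it\dot{H}_{\pm\infty}}$, the density of $\mathcal{A}$ (Lemma \ref{densityA}), and the transfer to the $\dot{H}$ dynamics via the uniform bound of Theorem 12.1 of \cite{GGH} are all sound.
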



\begin{lemma}[Propagation of $L^2$ norm of the first component]
\label{propFirstComp}
We still assume that $n\neq 0$ or $m>0$.
For all $u\in \dot{\mathcal{E}}_{\pm \infty}$ and for all $\epsilon>0$, we have:
\[\lim\limits_{t\to +\infty} \|w^{-\epsilon}(e^{it\dot{H}_{\pm \infty}}u)_0 \|_{L^2} = 0
\]
\end{lemma}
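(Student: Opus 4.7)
The plan is to squeeze the first-component convergence out of the previous lemma by expanding the homogeneous energy norm and exploiting the hypothesis. Set $v(t)=(v_0(t),v_1(t)):=e^{it\dot{H}_{\pm\infty}}u$. The preceding lemma gives $\|w^{-\epsilon}v(t)\|^2_{\dot{\mathcal{E}}_{\pm\infty}}\to 0$, and by the explicit form of the energy norm this quantity equals
\[
\|w^{-\epsilon}(v_1+l_\pm v_0)\|^2_{L^2} + \|\partial_x(w^{-\epsilon}v_0)\|^2_{L^2}
+ \Big\langle\tfrac{\Delta_r}{\lambda^2(r^2+a^2)^2}P\,w^{-\epsilon}v_0,\,w^{-\epsilon}v_0\Big\rangle + m^2\|\sqrt{\Delta_r}\,w^{-\epsilon}v_0\|^2_{L^2},
\]
each summand being nonnegative. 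Under the hypothesis $n\neq 0$ or $m>0$, the last two terms combined dominate $c\|\sqrt{\Delta_r}\,w^{-\epsilon}v_0\|^2_{L^2}$: if $m>0$ this is the fourth term directly; if $n\neq 0$ the restriction of $P$ to $\mathcal{Y}^n$ satisfies $P\geq \lambda n^2>0$ (since $-\partial_\phi^2=n^2$ on this kernel and the $\theta$-part of $P$ is nonnegative), giving the same bound via the third. Because $\Delta_r=\alpha(r)(r-r_-)(r_+-r)$ with $\alpha$ smooth and positive on $[r_-,r_+]$, one has $\sqrt{\Delta_r}\sim w^{-1}$, and therefore $\|w^{-1-\epsilon}v_0(t)\|_{L^2}\to 0$ for every $\epsilon>0$. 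Substituting $\epsilon\mapsto 1+\epsilon'$ this already proves the claim in the range $\epsilon>1$.

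For the delicate range $\epsilon\in(0,1]$ the energy estimate loses one full power of $w$ against the target weight $w^{-\epsilon}$, so a complementary argument is needed. My plan is to combine the bound above with the Sobolev-type information $\|\partial_x(w^{-\epsilon}v_0(t))\|_{L^2}\to 0$ (also extracted from the expansion) through a weighted Hardy-type identity: setting $W(x):=\int_{-\infty}^{x}w(s)^{-2\epsilon}\,\dd s$, which is bounded because $w^{-1}$ decays exponentially at both horizons, integration by parts yields
\[
\int w^{-2\epsilon}|v_0(t)|^2\,\dd x\,\dd\omega = -2\Re\int W(x)\,\overline{v_0(t)}\,\partial_x v_0(t)\,\dd x\,\dd\omega.
\]
Writing $\partial_x v_0=w^{\epsilon}\partial_x(w^{-\epsilon}v_0)+O(1)v_0$ (using the pointwise bound $|\partial_x w^{-\epsilon}|\lesssim w^{-\epsilon}$ that follows from the exponential profile of $w$ at the horizons) and distributing weights of $w^{-1-\epsilon}$ on one factor and of $w^{1+\epsilon}\cdot W$ on the other, Cauchy--Schwarz gives a bound of the form
\[
\|w^{-\epsilon}v_0(t)\|^2_{L^2}\leq C\|w^{-1-\epsilon}v_0(t)\|_{L^2}\bigl(\|\partial_x(w^{-\epsilon}v_0(t))\|_{L^2}+\|w^{-1-\epsilon}v_0(t)\|_{L^2}\bigr),
\]
whose right-hand side vanishes as $t\to\infty$. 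To legitimise these manipulations I first restrict to smooth compactly supported data in finitely many $P$-eigenspaces, where separability of $h_{0,\pm\infty}$ combined with finite speed of propagation on each mode keeps $v_0(t)$ compactly supported for all $t$ (so all integrals are manifestly finite and boundary terms vanish), and then extend to all of $\dot{\mathcal{E}}_{\pm\infty}$ by density, using the uniform bound on $\|v(t)\|_{\dot{\mathcal{E}}_{\pm\infty}}$ coming from energy conservation for the selfadjoint generator $\dot{H}_{\pm\infty}$.

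The main obstacle is precisely the low-weight regime $\epsilon\in(0,1]$: the natural energy identity yields an extra factor of $w$ relative to the weight appearing in the statement, and recovering it requires either the Hardy-type integration by parts sketched above or, alternatively, a spectral/RAGE argument that exploits the absolute continuity of the spectrum of the separable Schr\"odinger-type operator $h_{0,\pm\infty}=-\partial_x^2+\frac{\Delta_r}{\lambda^2(r^2+a^2)^2}P+\Delta_r m^2$ with potential vanishing at both horizons. Verifying that the boundary terms in the integration by parts genuinely vanish --- obvious for compactly supported approximants but passing to the limit requires care --- and securing the uniform bound on $\|w^{-\epsilon}v_0(t)\|_{L^2}$ that underlies the density step are the delicate technical points.
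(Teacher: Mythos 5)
Your first step is fine: expanding $\|w^{-\epsilon}v(t)\|^2_{\dot{\mathcal{E}}_{\pm\infty}}$ and using $n\neq 0$ or $m>0$ to extract $\|w^{-1-\epsilon}v_0(t)\|_{L^2}\to 0$ is correct, and it does settle the (easy) range $\epsilon>1$. But the content of the lemma is the range $\epsilon\in(0,1]$, and there your argument has a genuine gap. The inequality you claim,
\[
\|w^{-\epsilon}v_0\|^2_{L^2}\leq C\,\|w^{-1-\epsilon}v_0\|_{L^2}\bigl(\|\partial_x(w^{-\epsilon}v_0)\|_{L^2}+\|w^{-1-\epsilon}v_0\|_{L^2}\bigr),
\]
is false. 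After the integration by parts you must estimate $\int W(x)\,\overline{v_0}\,w^{\epsilon}\partial_x(w^{-\epsilon}v_0)$ by Cauchy--Schwarz; putting $w^{-1-\epsilon}$ on the $v_0$ factor leaves the weight $w^{1+2\epsilon}W$ on the other, and since $W(x)\to\int_{\R}w^{-2\epsilon}>0$ as $x\to+\infty$ while $w\to+\infty$ there, this weight is unbounded and the estimate does not close. Indeed, writing $f=w^{-\epsilon}v_0$, your inequality would assert $\|f\|^2\lesssim\|w^{-1}f\|\,(\|f'\|+\|w^{-1}f\|)$, which is defeated by a spreading bump escaping to $+\infty$ (take $f_t(x)=t^{-1/2}\chi((x-2t)/t)$: the left side is constant in $t$ while the right side tends to $0$). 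So no functional inequality of this shape can convert your two pieces of information ($\|w^{-1-\epsilon}v_0\|\to0$ and $\|\partial_x(w^{-\epsilon}v_0)\|\to0$) into the conclusion; the sketched ``RAGE'' alternative is not developed and cannot stand in for a proof.

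The missing idea is to split the weight in half rather than to try to recover a full power of $w$. Write $w^{-\epsilon}u_0(t)=w^{-\epsilon/2}\cdot\bigl(w^{-\epsilon/2}u_0(t)\bigr)$ and apply the Hardy inequality of lemma \ref{HardyIneq} with the exponentially decaying weight $q=w^{-\epsilon/2}$ to the function $g=w^{-\epsilon/2}u_0(t)$: this yields $\|w^{-\epsilon}u_0(t)\|_{L^2}\leq C\bigl(\|\partial_x g\|_{L^2}+\|g\|_{L^2((-1,1))}\bigr)$. The first term is dominated by $\|w^{-\epsilon/2}u(t)\|_{\dot{\mathcal{E}}_{\pm\infty}}$, and the second is dominated by the potential part of that same energy because $\Delta_r\bigl(\tfrac{P}{\lambda^2(a^2+r^2)^2}+m^2\bigr)$ is bounded below by a positive constant on $(-1,1)$ precisely under the hypothesis $n\neq 0$ or $m>0$. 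Lemma \ref{propEstGen} applied with $\epsilon/2$ in place of $\epsilon$ then finishes the proof in one line, for every $\epsilon>0$, with no case distinction and no compact-support approximation. This is exactly the paper's argument.
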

\begin{remark}
The quantity $\|w^{-\epsilon}u_0 \|_{L^2}$ is well defined for $u\in (C^{\infty}_0)^2$. We extend the function by continuity for general $u\in\dot{\mathcal{E}}_{\pm \infty}$ thanks to the Hardy inequality (see lemma \ref{HardyIneq}) which gives the estimate:
\begin{align*}
\|w^{-\epsilon}u_0 \|_{L^2} &\leq C(\|\partial_x u_0 \|_{L^2} + \|u_0\|_{L^2((-1, 1))}) \\
&\leq C\|u\|_{\dot{\mathcal{E}}_{\pm \infty}}
\end{align*}
\end{remark}

\begin{proof}
The idea of the proof is to use the Hardy inequality with $w^{-\frac{\epsilon}{2}}$ and then to use the general estimate result shown in the previous lemma. To simplify the notation we write $u(t) := e^{it\dot{H}_{\pm \infty}}u$.
By Hardy inequality we have
\begin{align*}
\|w^{-\epsilon}u_0(t) \|_{L^2} &\leq C(\|\partial_x w^{-\frac{\epsilon}{2}}u_0(t) \|_L^2 + \|w^{-\frac{\epsilon}{2}}u_0(t)\|_{L^2((-1, 1))}) \\
&\leq C \|w^{-\frac{\epsilon}{2}}u(t)\|_{\dot{\mathcal{E}}_{\pm \infty}}
\end{align*}
Where we used the fact that $n\neq 0$ or $m> 0$ to bound $\|w^{-\frac{\epsilon}{2}}u_0(t)\|_{L^2((-1, 1))}$ by $C\left<\Delta_r \left( \frac{P}{\lambda^2(a^2+r^2)^2} + m^2\right)u_0, u_0 \right>$.
Finally we conclude thanks to lemma \ref{propEstGen}.
\end{proof}


\begin{lemma}
For all $u \in \dot{\mathcal{E}}^n$,
\[
W_{-\infty}\Omega_{-\infty}u + W_{+\infty}\Omega_{+\infty}u = u
\]
\end{lemma}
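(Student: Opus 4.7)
The strategy is to exploit the partition of unity $i_+^2 + i_-^2 = 1$ in order to split $e^{it\dot H^n}u$ into two pieces, one localized near each horizon, and then to recognize each piece asymptotically as the evolution by the corresponding comparison dynamics of $\Omega_{\pm\infty}u$.

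First, observe that for every $t\in\R$,
\[
u = e^{-it\dot H^n}e^{it\dot H^n}u = e^{-it\dot H^n} i_+\bigl(i_+ e^{it\dot H^n}u\bigr) + e^{-it\dot H^n} i_-\bigl(i_- e^{it\dot H^n}u\bigr).
\]
Next I would use that $\dot H^n_{\pm\infty}$ is selfadjoint, so $e^{it\dot H^n_{\pm\infty}}$ is unitary on $\dot{\mathcal E}^n_{\pm\infty}$. Consequently, the definition
\[
\Omega_{\pm\infty}u = \lim_{t\to +\infty} e^{-it\dot H^n_{\pm\infty}} i_\pm e^{it\dot H^n}u \quad\text{in } \dot{\mathcal E}^n_{\pm\infty}
\]
can be rewritten, after applying the isometry $e^{it\dot H^n_{\pm\infty}}$, as
\[
i_\pm e^{it\dot H^n}u = e^{it\dot H^n_{\pm\infty}}\Omega_{\pm\infty}u + R_\pm(t),
\qquad R_\pm(t)\xrightarrow[t\to +\infty]{} 0 \text{ in } \dot{\mathcal E}^n_{\pm\infty}.
\]
Substituting this into the decomposition yields
\[
u = e^{-it\dot H^n} i_+\,e^{it\dot H^n_{+\infty}}\Omega_{+\infty}u \;+\; e^{-it\dot H^n} i_-\,e^{it\dot H^n_{-\infty}}\Omega_{-\infty}u \;+\; e^{-it\dot H^n}\bigl(i_+ R_+(t)+ i_- R_-(t)\bigr).
\]

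I would then let $t\to +\infty$. The first two terms converge respectively to $W_{+\infty}\Omega_{+\infty}u$ and $W_{-\infty}\Omega_{-\infty}u$ by the very definition of the direct wave operators in \cite{GGH}. The error term needs to go to zero in $\dot{\mathcal E}^n$: this uses the continuity of $i_\pm:\dot{\mathcal E}^n_{\pm\infty}\to \dot{\mathcal E}^n$ (the statement dual to Lemma~\ref{boundedness3}, cf.\ lemma 5.4 of \cite{GGH}), together with the uniform boundedness of $e^{-it\dot H^n}$ on $\dot{\mathcal E}^n$ for $|a|$ small enough (which is precisely the content of theorem 12.1 of \cite{GGH} already invoked in the proof of Theorem~\ref{existenceDirect}). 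Combining these facts,
\[
\bigl\| e^{-it\dot H^n}\bigl(i_+ R_+(t)+ i_- R_-(t)\bigr)\bigr\|_{\dot{\mathcal E}^n}
\;\leq\; C\bigl(\|R_+(t)\|_{\dot{\mathcal E}^n_{+\infty}}+\|R_-(t)\|_{\dot{\mathcal E}^n_{-\infty}}\bigr)
\;\xrightarrow[t\to +\infty]{} 0,
\]
and the identity $u = W_{+\infty}\Omega_{+\infty}u + W_{-\infty}\Omega_{-\infty}u$ follows.

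The main conceptual point -- and the only nontrivial ingredient beyond manipulating limits -- is the uniform boundedness of the Klein-Gordon propagator $e^{-it\dot H^n}$ on $\dot{\mathcal E}^n$; without it, the error term might grow and one could not pass to the limit. This is where the smallness of $a$ enters, ruling out exponentially growing (superradiant) modes, and it is borrowed from \cite{GGH}. Everything else is a bookkeeping argument built on $i_+^2+i_-^2=1$ and the isometry of the comparison dynamics.
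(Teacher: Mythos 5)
Your proposal is correct and is essentially the paper's own argument: the paper composes the two strong limits directly, writing $W_{\pm\infty}\Omega_{\pm\infty}u=\lim_{t\to+\infty}e^{-it\dot H}i_\pm e^{it\dot H_{\pm\infty}}e^{-it\dot H_{\pm\infty}}i_\pm e^{it\dot H}u=\lim_{t\to+\infty}e^{-it\dot H}i_\pm^2 e^{it\dot H}u$ and sums, invoking uniform boundedness of the families; your substitution $i_\pm e^{it\dot H}u=e^{it\dot H_{\pm\infty}}\Omega_{\pm\infty}u+R_\pm(t)$ with the error estimate is just that same composition-of-strong-limits step written out explicitly.
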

\begin{proof}
Because $W_{\pm \infty}$ and $\Omega_{\pm \infty}$ are defined as a strong limit of uniformly bounded family of operators, we can rewrite

\begin{align*}
W_{-\infty}\Omega_{-\infty}u + W_{+\infty}\Omega_{+\infty}u &= \lim\limits_{t\to +\infty} e^{-it\dot{H}}i_-e^{it\dot{H}_{-\infty}}e^{-it\dot{H}_{-\infty}}i_-e^{it\dot{H}} u + e^{-it\dot{H}}i_+e^{it\dot{H}_{+\infty}}e^{-it\dot{H}_{+\infty}}i_+ e^{it\dot{H}}u \\
&= \lim\limits_{t\to +\infty}e^{-it\dot{H}} i_-^2 e^{it\dot{H}}u + e^{-it\dot{H}}i_+^2 e^{it\dot{H}} u \\
&= u 
\end{align*}

\end{proof}

\begin{remark}
It is not possible to do that directly with $W_{\pm}$ and $\Omega_{\pm}$ because $W_{\pm}$ is not defined as a strong limit of bounded operators.
\end{remark}

\begin{lemma}
\label{infIneq}
There exists a constant $C>0$ such that: for all $u\in \dot{\mathcal{E}}^n$, 
\[
\|u\|_{\dot{\mathcal{E}}} \leq C\left(\|\Omega_{+\infty} u \|_{\dot{\mathcal{E}}_{+\infty}} + \|\Omega_{-\infty} u \|_{\dot{\mathcal{E}}_{-\infty}}\right)
\]
\end{lemma}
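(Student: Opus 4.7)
The plan is to deduce this inequality as an immediate consequence of the preceding decomposition lemma combined with the boundedness of the direct wave operators $W_{\pm\infty}$ from $\dot{\mathcal{E}}_{\pm\infty}$ to $\dot{\mathcal{E}}$. By the previous lemma, for every $u\in\dot{\mathcal{E}}^n$ we have the identity
\[
u = W_{-\infty}\Omega_{-\infty}u + W_{+\infty}\Omega_{+\infty}u,
\]
so the triangle inequality reduces the problem to bounding $\|W_{\pm\infty}v\|_{\dot{\mathcal{E}}}$ by $\|v\|_{\dot{\mathcal{E}}_{\pm\infty}}$ (up to a constant) for $v\in\dot{\mathcal{E}}_{\pm\infty}$.

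To obtain this bound I would start from the definition $W_{\pm\infty} = s\text{-}\lim_{t\to+\infty} e^{-it\dot{H}}i_{\pm}e^{it\dot{H}_{\pm\infty}}$ and estimate each term along the net. For any $t$, one has
\[
\|e^{-it\dot{H}}i_{\pm}e^{it\dot{H}_{\pm\infty}}v\|_{\dot{\mathcal{E}}} \leq \|e^{-it\dot{H}}\|_{\mathcal{B}(\dot{\mathcal{E}})} \, \|i_{\pm}e^{it\dot{H}_{\pm\infty}}v\|_{\dot{\mathcal{E}}}.
\]
The first factor is uniformly bounded by a constant $C_n$ thanks to theorem 12.1 of \cite{GGH} (already invoked in the proof of theorem \ref{existenceDirect}). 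For the second factor, the continuity of the multiplication operator $i_{\pm}:\dot{\mathcal{E}}^n_{\pm\infty}\rightarrow\dot{\mathcal{E}}^n$ (lemma 5.4, together with lemma 10.2, of \cite{GGH}, likewise used above) gives $\|i_{\pm}w\|_{\dot{\mathcal{E}}}\leq C\|w\|_{\dot{\mathcal{E}}_{\pm\infty}}$ for every $w\in\dot{\mathcal{E}}_{\pm\infty}$. Applying this to $w = e^{it\dot{H}_{\pm\infty}}v$ and using that $e^{it\dot{H}_{\pm\infty}}$ is unitary on $\dot{\mathcal{E}}_{\pm\infty}$ (as $\dot{H}_{\pm\infty}$ is selfadjoint on this space), we conclude
\[
\|e^{-it\dot{H}}i_{\pm}e^{it\dot{H}_{\pm\infty}}v\|_{\dot{\mathcal{E}}} \leq CC_n\|v\|_{\dot{\mathcal{E}}_{\pm\infty}}.
\]
Passing to the limit $t\to+\infty$ yields the boundedness of $W_{\pm\infty}$.

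Combining this with the decomposition of $u$ gives
\[
\|u\|_{\dot{\mathcal{E}}} \leq \|W_{-\infty}\Omega_{-\infty}u\|_{\dot{\mathcal{E}}} + \|W_{+\infty}\Omega_{+\infty}u\|_{\dot{\mathcal{E}}} \leq CC_n\bigl(\|\Omega_{-\infty}u\|_{\dot{\mathcal{E}}_{-\infty}} + \|\Omega_{+\infty}u\|_{\dot{\mathcal{E}}_{+\infty}}\bigr),
\]
which is the claimed inequality. No step here looks delicate: everything is a packaging of previously established uniform bounds and selfadjointness, so the only thing to double-check is that the two continuity statements borrowed from \cite{GGH} apply in the present $n$-fixed setting and that the identity $u = W_{-\infty}\Omega_{-\infty}u + W_{+\infty}\Omega_{+\infty}u$ is indeed valid on all of $\dot{\mathcal{E}}^n$ rather than just on a dense subset.
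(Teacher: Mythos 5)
Your proposal is correct and follows exactly the paper's argument: apply the identity $u = W_{-\infty}\Omega_{-\infty}u + W_{+\infty}\Omega_{+\infty}u$ from the preceding lemma, then the triangle inequality and the boundedness of $W_{\pm\infty}$. The extra detail you supply on why $W_{\pm\infty}$ is bounded (uniform boundedness of $e^{-it\dot{H}}$, continuity of $i_{\pm}$, unitarity of $e^{it\dot{H}_{\pm\infty}}$) is consistent with how the paper establishes that fact elsewhere, so there is no substantive difference.
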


\begin{proof}
It is a direct consequence of the previous lemma.
\[
u = W_{-\infty}\Omega_{-\infty}u + W_{+\infty}\Omega_{+\infty}u
\]
We then apply triangular inequality and use the fact that $W_{\pm \infty}$ are bounded.
\end{proof}

\begin{lemma}
\label{finIneq}
We assume that $n\neq 0$ or $m> 0$.
There exists a constant $C>0$ such that, for all $u\in \dot{\mathcal{E}}^{fin,n}_{+\infty}$:
\[
\|u\|_{\dot{\mathcal{E}}_{+\infty}} \leq C\left(\|\Omega_{T,+}^{+\infty}u\|_{\dot{\mathcal{E}}_{T,+}} + S(u)\right)
\]
where 
\[
S(u) := \limsup_{t\to +\infty} \|i_- e^{it\dot{H}_{+\infty}}u\|_{\dot{\mathcal{E}}_+\infty}
\]
\end{lemma}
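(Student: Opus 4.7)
The plan is to use unitarity of $e^{it\dot H_{+\infty}}$ on $\dot{\mathcal{E}}_{+\infty}$ (which holds since $\dot H_{+\infty}$ is selfadjoint) to freeze the left-hand side as $\|e^{it\dot H_{+\infty}}u\|_{\dot{\mathcal{E}}_{+\infty}}$ for every $t\geq 0$, and then to exploit the partition of unity $i_+^2+i_-^2=1$ together with the triangle inequality:
\[
\|u\|_{\dot{\mathcal{E}}_{+\infty}}\leq \|i_+^2\, e^{it\dot H_{+\infty}}u\|_{\dot{\mathcal{E}}_{+\infty}}+\|i_-^2\, e^{it\dot H_{+\infty}}u\|_{\dot{\mathcal{E}}_{+\infty}}.
\]
The second summand is bounded by $C\,\|i_-e^{it\dot H_{+\infty}}u\|_{\dot{\mathcal{E}}_{+\infty}}$, since multiplication by the fixed smooth bounded function $i_-$ (with bounded $x$-derivative) is continuous on $\dot{\mathcal{E}}_{+\infty}$; its $\limsup$ in $t$ is therefore at most $C\,S(u)$.

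The heart of the argument is to show that the first summand satisfies
\[
\|i_+^2 v(t)\|_{\dot{\mathcal{E}}_{+\infty}}^2 \leq 4\,\|i_+ v(t)\|_{\dot{\mathcal{E}}_{T,+}}^2 + \varepsilon(t),\qquad \varepsilon(t)\xrightarrow[t\to+\infty]{}0,
\]
where $v(t)=e^{it\dot H_{+\infty}}u$. The angular/mass contribution $\bigl\langle\bigl(\frac{\Delta_r P}{\lambda^2(r^2+a^2)^2}+\Delta_r m^2\bigr)i_+^2 v_0,i_+^2 v_0\bigr\rangle$, absent on the $\dot{\mathcal{E}}_{T,+}$ side, is dominated using $\Delta_r\lesssim w^{-2}$ on $[r_-,r_+]$ together with the fact that $u\in\dot{\mathcal{E}}^{fin,n}_{+\infty}$ forces $v(t)$ to remain in the same finite angular block $\bigoplus_{q\leq Q}\dot{\mathcal{E}}^{q,n}_{+\infty}$ (since $\dot H_{+\infty}$ commutes with $P$), so that $\langle Pv_0,v_0\rangle\leq \lambda_Q\|v_0\|^2$. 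The derivative term is handled by the identity $\partial_x(i_+^2 v_0)=i_+\,\partial_x(i_+ v_0)+(\partial_x i_+)(i_+ v_0)$ combined with $\partial_x(i_+ v_0)=(\partial_x+i(l-l_+))(i_+v_0)-i(l-l_+)(i_+v_0)$, producing a principal term controlled by $\|i_+v(t)\|_{\dot{\mathcal{E}}_{T,+}}$ plus two error terms involving $(\partial_x i_+)i_+$ (compactly supported in $x$) and $(l-l_+)i_+$ (bounded by $Cw^{-2}$, since $l-l_+$ vanishes like $r_+-r$ at $r_+$ and $i_+$ vanishes near $r_-$). In each case the error is of the form ``bounded multiple of $w^{-\epsilon}$ times $v_0$'' for some $\epsilon>0$, so $\varepsilon(t)$ is controlled by $C_Q\|w^{-\epsilon}v_0(t)\|_{L^2}^2$.

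The crucial input is then Lemma~\ref{propFirstComp}, which under the hypothesis $n\neq 0$ or $m>0$ says exactly that $\|w^{-\epsilon}v_0(t)\|_{L^2}\to 0$ as $t\to+\infty$; this forces $\varepsilon(t)\to 0$. Finally, unitarity of $e^{-it\dot H_{T,+}}$ on $\dot{\mathcal{E}}_{T,+}$ and the very definition of $\Omega_{T,+}^{+\infty}$ give $\|i_+ v(t)\|_{\dot{\mathcal{E}}_{T,+}}=\|e^{-it\dot H_{T,+}}i_+ v(t)\|_{\dot{\mathcal{E}}_{T,+}}\to\|\Omega_{T,+}^{+\infty}u\|_{\dot{\mathcal{E}}_{T,+}}$. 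Taking the $\limsup$ as $t\to+\infty$ in the initial decomposition then produces
\[
\|u\|_{\dot{\mathcal{E}}_{+\infty}}\leq 2\,\|\Omega_{T,+}^{+\infty}u\|_{\dot{\mathcal{E}}_{T,+}}+C\,S(u),
\]
which is the desired estimate. The main obstacle is the careful bookkeeping of the three classes of error terms arising when comparing the two energy norms applied to $i_+^2 v_0$ and $i_+ v_0$, and verifying that each one is majorised by a constant multiple of $w^{-\epsilon}v_0(t)$ so that Lemma~\ref{propFirstComp} can be applied; the finite-angular-block hypothesis $u\in\dot{\mathcal{E}}^{fin,n}_{+\infty}$ plays a non-negotiable role to absorb the eigenvalue $\lambda_Q$ into the constant.
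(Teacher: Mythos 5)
Your proof is correct and follows essentially the same route as the paper: split $e^{it\dot{H}_{+\infty}}u$ into an $i_+$ part controlled by $\|e^{-it\dot{H}_{T,+}}i_+e^{it\dot{H}_{+\infty}}u\|_{\dot{\mathcal{E}}_{T,+}}\to\|\Omega_{T,+}^{+\infty}u\|_{\dot{\mathcal{E}}_{T,+}}$ plus error terms killed by Lemma \ref{propFirstComp} (using the finite angular block to absorb $\lambda_Q$), and an $i_-$ part absorbed into $S(u)$. The only cosmetic difference is your use of $i_+^2+i_-^2=1$ in place of the paper's splitting $1=i_++i_-+(1-i_+-i_-)$, which trades the paper's appeal to Lemma \ref{propEstGen} for the compactly supported remainder against the boundedness of multiplication by $i_-$ on $\dot{\mathcal{E}}^n_{+\infty}$ (valid under the hypothesis $n\neq 0$ or $m>0$, as the paper notes); both are fine.
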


\begin{remark}
The definition of $S$ makes sense because $i_-$ is bounded from $\dot{\mathcal{E}}^n_{+\infty}$ into itself when $n\neq 0$ or $m> 0$.
\end{remark}
\begin{proof}
Let $u \in \dot{\mathcal{E}}^{fin,n}_{+\infty}$.
We decompose $u = u_{1} + ... + u_{q}$ where $u_{i}$ is an eigenvector of $P$ associated with the eigenvalue $\lambda_{i}$. 
Let $t\in \mathbb{R}$.
By the uniform boundedness of $e^{-it\dot{H}_{+\infty}}$, we have a constant $C'$ independent of $t$ and $u$ such that
\[
\|u\|_{\dot{\mathcal{E}}_{+\infty}} \leq C' \|e^{it\dot{H}_{+\infty}} u \|_{\dot{\mathcal{E}}_{+\infty}}
\]
We then compute:
\begin{align*}
 \|e^{it\dot{H}_{+\infty}} u \|^2_{\dot{\mathcal{E}}_{+\infty}} &\leq 3\|i_+e^{it\dot{H}_{+\infty}}u\|^2_{\dot{\mathcal{E}}_{+\infty}} + 3\|i_-e^{it\dot{H}_{+\infty}} u\|^2_{\dot{\mathcal{E}}_{+\infty}}+ 3 \| (1-(i_- + i_+))e^{it\dot{H}_{+\infty}}u\|^2_{\dot{\mathcal{E}}_{+\infty}}
\end{align*}
Because $(1-(i_- + i_+))$ is compactly supported, 
\[
\| (1-(i_- + i_+))e^{it\dot{H}_{+\infty}}u\|^2_{\dot{\mathcal{E}}_{+\infty}} = o(1)
\]
when $t\to +\infty$ by the lemma \ref{propEstGen}.
Moreover,
\[
\|i_-e^{it\dot{H}_{+\infty}} u\|^2_{\dot{\mathcal{E}}_{+\infty}} \leq S(u)^2 + o(1)
\]
just by definition of the limsup.
We write for simplicity $u(t) := e^{it\dot{H}_{+\infty}}u$ (we can also write $u_k(t):= e^{it\dot{H}_{+\infty}}u_k$ and because $\dot{H}_{+\infty}$ commute with $P$ it is still an eigenvector of $P$. It is where we need separability. )
We now analyze the term
\begin{align*}
\|i_+e^{it\dot{H}_{+\infty}}u\|^2_{\dot{\mathcal{E}}_{+\infty}} &= \|\partial_x i_+ u(t)_0 \|^2_{L^2} + \sum_{k=1}^{q} \lambda_k \left<\frac{\Delta_r}{\lambda^2(a^2+r^2)^2}i_+ u_k(t)_0 , i_+ u_k(t)_0\right> \notag \\
&\phantom{{}={}} + \left<\Delta_r m^2 i_+u(t)_0,i_+u(t)_0\right> + \| u(t)_1 + l_+u(t)_0 \|^2_{L^{2}} \\
&\leq 2\|(\partial_x + i(l-l_+)) i_+ u(t)_0 \|^2_{L^2} + 2\|(l-l_+) i_+ u(t)_0 \|^2_{L^2}\notag \\
&\phantom{{}={}} + \sum_{k=1}^{q} \lambda_k \left<\frac{\Delta_r}{\lambda^2(a^2+r^2)^2}i_+ u_k(t)_0 , i_+ u_k(t)_0\right>\notag \\
&\phantom{{}={}} + \left<\Delta_r m^2 i_+u(t)_0,i_+u(t)_0\right> + \| u(t)_1 + l_+u(t)_0 \|^2_{L^{2} }
\end{align*}
In this expression, 
\[ 2\|(\partial_x + i(l-l_+)) i_+ u(t)_0 \|^2_{L^2} + \| u(t)_1 + l_+u(t)_0 \|^2_{L^{2}} \] 
can be controlled by 
\[ \|i_+ u(t)\|^2_{\dot{\mathcal{E}}_{T,+}} = \| e^{-itH_{T,+}}i_+ u(t) \|^2_{\dot{\mathcal{E}}_{T,+}}\]
and the other terms 
\[ 2\|(l-l_+) i_+ u(t)_0 \|^2_{L^2} + \sum_{k=1}^{q} \lambda_k \left<\frac{\Delta_r}{\lambda^2(a^2+r^2)^2}i_+ u_k(t)_0 , i_+ u_k(t)_0\right> + \left<\Delta_r m^2 i_+u(t)_0,i_+u(t)_0\right>
\]
converge to zero when $t\to +\infty$ by the lemma \ref{propFirstComp} (propagation of the $L^2$ norm of the first component). Finally, we take $t\to +\infty$ in the bound and we get
\[
\|u\|_{\dot{\mathcal{E}}_{+\infty}} \leq C(\|\Omega_+ u\|_{\dot{\mathcal{E}}_{T,+}}+ S(u))
\]
\end{proof}

Now we want somehow extend the inequality by density. That is why we need to prove the continuity of $S$. This is the goal of the two next lemmas:
\begin{lemma}
Let $n\neq 0$ or $m> 0$.
There exists a constant $C>0$ such that:
For all $u,v\in \dot{\mathcal{E}}^n_{+\infty}$, 
\begin{align}
S(u+v) &\leq S(u)+S(v) \label{inegTri}\\
S(u) &\leq C\|u\|_{\dot{\mathcal{E}}_{+\infty}} \label{borne}
\end{align}
\end{lemma}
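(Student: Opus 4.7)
\smallskip

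\noindent\textbf{Proof proposal.} Both inequalities follow from elementary properties of $\limsup$ combined with the linearity of the propagator and the fact (recorded in the remark preceding the statement) that $i_{-}$ acts as a bounded operator on $\dot{\mathcal{E}}^{n}_{+\infty}$ when $n\neq 0$ or $m>0$. The plan is to treat the two inequalities separately; neither should pose any serious difficulty.

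For \eqref{inegTri}, the strategy is simply to combine linearity with the triangle inequality before passing to the $\limsup$. Writing $a(t):=\|i_{-}e^{it\dot{H}_{+\infty}}u\|_{\dot{\mathcal{E}}_{+\infty}}$ and $b(t):=\|i_{-}e^{it\dot{H}_{+\infty}}v\|_{\dot{\mathcal{E}}_{+\infty}}$, we have $\|i_{-}e^{it\dot{H}_{+\infty}}(u+v)\|_{\dot{\mathcal{E}}_{+\infty}}\leq a(t)+b(t)$ for every $t\in\R$, and then the well-known subadditivity $\limsup_{t\to+\infty}(a(t)+b(t))\leq \limsup_{t\to+\infty}a(t)+\limsup_{t\to+\infty}b(t)$ yields \eqref{inegTri}.

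For \eqref{borne}, the key ingredient is the boundedness of $i_{-}$ on $\dot{\mathcal{E}}^{n}_{+\infty}$. This is proved in essentially the same way as lemma \ref{boundedness3}: the only dangerous term in $\|i_{-}w\|^2_{\dot{\mathcal{E}}_{+\infty}}$ comes from $\partial_{x}(i_{-})$, and since $\partial_{x}i_{-}$ is compactly supported in $x$ (where $\Delta_{r}$ is bounded below), one can absorb $\|(\partial_{x}i_{-})w_{0}\|^{2}_{L^{2}}$ either into the term $\langle \Delta_{r}m^{2}w_{0},w_{0}\rangle$ when $m^{2}>0$, or into $\langle\frac{\Delta_{r}}{\lambda^{2}(a^{2}+r^{2})^{2}}Pw_{0},w_{0}\rangle$ when $n\neq 0$ (in which case $P$ has a strictly positive lower bound on $\mathcal{Y}^{n}$ because $\langle Pw_0,w_0\rangle \geq \lambda n^{2}\int_{\mathbb{S}^{2}}|w_{0}|^{2}/\sin^{2}\theta\,\sin\theta\,d\theta d\phi$ for data in the $n$-th $\phi$-mode). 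Combining this with the fact that $e^{it\dot{H}_{+\infty}}$ is a uniformly bounded $C^{0}$-group on $\dot{\mathcal{E}}^{n}_{+\infty}$ (in fact unitary, since $\dot{H}_{+\infty}$ is selfadjoint), we get a constant $C>0$ independent of $t$ and $u$ such that
\[
\|i_{-}e^{it\dot{H}_{+\infty}}u\|_{\dot{\mathcal{E}}_{+\infty}}\leq \|i_{-}\|_{\mathcal{B}(\dot{\mathcal{E}}^{n}_{+\infty})}\,\|e^{it\dot{H}_{+\infty}}u\|_{\dot{\mathcal{E}}_{+\infty}}\leq C\,\|u\|_{\dot{\mathcal{E}}_{+\infty}}.
\]
Taking $\limsup$ as $t\to+\infty$ gives \eqref{borne}.

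There is no real obstacle here: the lemma is essentially a bookkeeping statement that packages the two properties needed to extend by density the inequality of lemma \ref{finIneq}. The only point that deserves attention is the $i_{-}$-boundedness, but this is already available from the argument used in lemma \ref{boundedness3} (in fact in a simpler form, since the target space is $\dot{\mathcal{E}}^{n}_{+\infty}$ itself rather than $\dot{\mathcal{E}}^{n}_{T,+}$).
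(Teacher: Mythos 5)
Your proposal is correct and follows essentially the same route as the paper: subadditivity of the $\limsup$ combined with the triangle inequality for \eqref{inegTri}, and the boundedness of $i_{-}$ on $\dot{\mathcal{E}}^{n}_{+\infty}$ together with the uniform boundedness of $e^{it\dot{H}_{+\infty}}$ for \eqref{borne}. The only difference is that you spell out why $i_{-}$ is bounded (absorbing the $\partial_{x}i_{-}$ term into the mass or $P$ term), which the paper simply cites from its preceding remark.
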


\begin{proof}
Let's prove \eqref{inegTri}:
\begin{align*}
S(u+v) &= \limsup_{t\to +\infty} \|i_-e^{it\dot{H}_{+\infty}}(u+v) \|_{\dot{\mathcal{E}}_{+\infty}}\\
& \leq \limsup_{t\to +\infty} \left( \|i_-e^{it\dot{H}_{+\infty}}u \| + \|i_-e^{it\dot{H}_{+\infty}}v\| \right) \\
& \leq \limsup_{t\to +\infty} \|i_-e^{it\dot{H}_{+\infty}}u \| + \limsup_{t\to +\infty} \|i_-e^{it\dot{H}_{+\infty}}v \| \\
&\leq S(u) + S(v)
\end{align*}
Finally, we prove \eqref{borne}:
\begin{align*}
\|i_-e^{it\dot{H}_{+\infty}}u \|_{\dot{\mathcal{E}}_{+\infty}} &\leq C'\|e^{it\dot{H}_{+\infty}}u \|_{\dot{\mathcal{E}}_{+\infty}} \\
&\leq C\|u\|_{\dot{\mathcal{E}}_{+\infty}}
\end{align*}
because $i_-$ is a bounded operator from $\dot{\mathcal{E}}_{+\infty}$ into itself and $e^{it\dot{H}_{+\infty}}$ is uniformly bounded on its energy space.
\end{proof}

\begin{lemma}
$S:\dot{\mathcal{E}}^n_{+\infty} \rightarrow [0, +\infty)$ is Lipschitz.
\end{lemma}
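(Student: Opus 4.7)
The plan is to derive Lipschitz continuity directly from the two properties established in the previous lemma, namely subadditivity $S(u+v)\leq S(u)+S(v)$ and boundedness $S(u)\leq C\|u\|_{\dot{\mathcal{E}}_{+\infty}}$. The only additional ingredient needed is the (absolute) homogeneity property $S(-u)=S(u)$, which is immediate from the definition since $\|i_- e^{it\dot{H}_{+\infty}}(-u)\|_{\dot{\mathcal{E}}_{+\infty}} = \|i_- e^{it\dot{H}_{+\infty}}u\|_{\dot{\mathcal{E}}_{+\infty}}$ (the energy norm is absolutely homogeneous).

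With these three ingredients, the argument is the standard one for seminorm-like functionals. For any $u,v \in \dot{\mathcal{E}}^n_{+\infty}$, write $u = (u-v) + v$ and apply subadditivity to obtain
\[
S(u) \leq S(u-v) + S(v),
\]
so that $S(u)-S(v) \leq S(u-v)$. Swapping the roles of $u$ and $v$, we also get $S(v)-S(u) \leq S(v-u) = S(-(u-v)) = S(u-v)$ by homogeneity. Combining these,
\[
|S(u)-S(v)| \leq S(u-v) \leq C\|u-v\|_{\dot{\mathcal{E}}_{+\infty}},
\]
where the last inequality is the boundedness estimate \eqref{borne}. This shows $S$ is $C$-Lipschitz.

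There is no real obstacle here, as the proof is a two-line application of the preceding lemma. The only subtle point to verify is the homogeneity $S(-u) = S(u)$, but this follows immediately from the $\mathds{1}$-homogeneity of the Hilbert norm on $\dot{\mathcal{E}}_{+\infty}$ (and more generally one has $S(\lambda u) = |\lambda| S(u)$ for every $\lambda \in \R$, which is why $S$ behaves like a seminorm).
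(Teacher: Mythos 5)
Your proof is correct and follows essentially the same route as the paper: subadditivity gives $S(u)-S(v)\leq S(u-v)$ and $S(v)-S(u)\leq S(v-u)$, and each of these is bounded by $C\|u-v\|_{\dot{\mathcal{E}}_{+\infty}}$ via \eqref{borne}. The only cosmetic difference is that you pass through the homogeneity $S(v-u)=S(u-v)$ before applying the bound, whereas the paper applies \eqref{borne} to $S(v-u)$ directly; both are valid and trivially equivalent.
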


\begin{proof}
It is a consequence of the previous lemma. Let $u,v \in \dot{\mathcal{E}}^n_{+\infty}$.
\begin{align*}
S(u) - S(v)&\leq S(u-v) \text{     by \eqref{inegTri}}\\
&\leq C\|u-v\|_{\dot{\mathcal{E}}_{+\infty}} \text{     by \eqref{borne}} \\
\end{align*}
and 
\begin{align*}
S(v)-S(u) &\leq S(v-u) \\
& \leq C\|u-v\|_{\dot{\mathcal{E}}_{+\infty}}
\end{align*}
\end{proof}

Now we can deduce by density the following proposition from lemma \ref{finIneq}
\begin{prop}
\label{genIneq}
We assume that $n\neq 0$ or $m> 0$.
There exists a constant $C>0$ such that, for all $u\in \dot{\mathcal{E}}^n_{+\infty}$:
\[
\|u\|_{\dot{\mathcal{E}}_{+\infty}} \leq C\left(\|\Omega_{T,+}^{+\infty}u\|_{\mathcal{E}_{T,+}} + S(u)\right)
\]
\end{prop}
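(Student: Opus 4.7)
The goal is to upgrade Lemma \ref{finIneq}, which gives the inequality on the dense subspace $\dot{\mathcal{E}}^{fin,n}_{+\infty}$, to every $u \in \dot{\mathcal{E}}^n_{+\infty}$ by a density argument. My plan is therefore quite short: I take $u \in \dot{\mathcal{E}}^n_{+\infty}$ arbitrary, pick a sequence $(u_k) \subset \dot{\mathcal{E}}^{fin,n}_{+\infty}$ with $u_k \to u$ in $\dot{\mathcal{E}}^n_{+\infty}$, apply Lemma \ref{finIneq} to each $u_k$ to get
\[
\|u_k\|_{\dot{\mathcal{E}}_{+\infty}} \leq C\left(\|\Omega_{T,+}^{+\infty}u_k\|_{\mathcal{E}_{T,+}} + S(u_k)\right),
\]
and then pass to the limit $k \to \infty$ in each of the three terms.

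The existence of such an approximating sequence is the only new ingredient. Since $P$ is a selfadjoint operator on $L^2(\mathbb{S}^2)$ with purely punctual spectrum and its eigenspaces $Z_q$ satisfy $\bigoplus_{q \in \N} Z_q = L^2(\mathbb{S}^2)$ in the Hilbert sense, the finite orthogonal truncations $\pi_{\leq Q} u$ (projection onto $\bigoplus_{q \leq Q} \mathcal{Z}_q$ componentwise) converge to $u$ in $\dot{\mathcal{E}}^n_{+\infty}$. Each $\pi_{\leq Q} u$ lies in $\dot{\mathcal{E}}^{fin,n}_{+\infty}$ because $\dot{H}_{+\infty}$ and hence $h_{0,+\infty}$ commutes with $P$, so the eigenspace decomposition is compatible with the $\dot{\mathcal{E}}_{+\infty}$ norm. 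Setting $u_k := \pi_{\leq k} u$ produces the desired sequence.

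Passage to the limit is straightforward: the left-hand side $\|u_k\|_{\dot{\mathcal{E}}_{+\infty}}$ converges to $\|u\|_{\dot{\mathcal{E}}_{+\infty}}$ by continuity of the norm; the term $\|\Omega_{T,+}^{+\infty}u_k\|_{\mathcal{E}_{T,+}}$ converges to $\|\Omega_{T,+}^{+\infty}u\|_{\mathcal{E}_{T,+}}$ because $\Omega_{T,+}^{+\infty} : \dot{\mathcal{E}}^n_{+\infty} \to \dot{\mathcal{E}}^n_{T,+}$ is bounded (it is the strong limit of a uniformly bounded family, exploited as in Lemma \ref{boundedness3}) and the $\dot{\mathcal{E}}_{T,+}$ norm coincides with the $\mathcal{E}_{T,+}$ norm on its range up to a uniformly controlled error; and $S(u_k) \to S(u)$ by the Lipschitz continuity of $S$ established in the previous lemma. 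Taking the limit in the inequality yields the claim with the same constant $C$.

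The only subtle point is verifying that the $\mathcal{E}_{T,+}$ norm (inhomogeneous) appearing on the right-hand side, as opposed to the $\dot{\mathcal{E}}_{T,+}$ norm against which $\Omega_{T,+}^{+\infty}$ is continuous, does not create an issue: this is harmless because $\Omega_{T,+}^{+\infty}$ takes values in $\mathcal{E}^r_{T,+}$ (by Proposition \ref{RanOmega}) on which both norms agree up to a constant, and in any case one can absorb any additional $\|\Omega_{T,+}^{+\infty}u\|_{L^2}$-type contribution into $S(u)$ via the propagation lemma. Apart from this bookkeeping, there is no real difficulty; the proof is essentially a one-line density argument enabled by the continuity of all three functionals involved.
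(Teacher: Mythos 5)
Your proof is correct and follows essentially the same route as the paper, which itself only remarks that the proposition is ``deduced by density'' from Lemma \ref{finIneq} once the Lipschitz continuity of $S$ and the boundedness of $\Omega_{T,+}^{+\infty}$ are in hand; your truncation sequence $\pi_{\leq k}u$ and the three limit passages are exactly the intended argument. The only cosmetic remark is that the discrepancy between $\mathcal{E}_{T,+}$ and $\dot{\mathcal{E}}_{T,+}$ on the right-hand side is resolved simply by $\|\cdot\|_{\dot{\mathcal{E}}_{T,+}}\leq\|\cdot\|_{\mathcal{E}_{T,+}}$ (the homogeneous statement you actually prove is the stronger one), so no absorption into $S(u)$ is needed.
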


The last thing we need to prove is
\begin{lemma}
\label{SRanOmega}
 For all $u\in Ran(\Omega_{+\infty})$,
\[
S(u) = 0
\]
\end{lemma}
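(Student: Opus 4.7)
The plan is to exploit the defining property of $\Omega_{+\infty}$ as a strong limit together with the propagation estimate of Lemma \ref{propEstGen} for the full Klein--Gordon dynamics.

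Given $u \in \mathrm{Ran}(\Omega_{+\infty})$, fix $v \in \dot{\mathcal{E}}^n$ with $u = \Omega_{+\infty}v$. Since $\dot{H}_{+\infty}$ is self-adjoint on $\dot{\mathcal{E}}_{+\infty}$, the group $e^{it\dot{H}_{+\infty}}$ is unitary on that space, so applying it to the strong limit defining $\Omega_{+\infty}v$ gives
\[
\left\|e^{it\dot{H}_{+\infty}}u - i_+ e^{it\dot{H}^n}v\right\|_{\dot{\mathcal{E}}_{+\infty}} = \left\|u - e^{-it\dot{H}_{+\infty}}i_+ e^{it\dot{H}^n}v\right\|_{\dot{\mathcal{E}}_{+\infty}} \xrightarrow[t\to +\infty]{} 0.
\]
Multiplying by $i_-$, which is bounded on $\dot{\mathcal{E}}^n_{+\infty}$ (by the remark following the statement of the lemma we seek to prove), yields
\[
i_- e^{it\dot{H}_{+\infty}}u = (i_- i_+)\, e^{it\dot{H}^n}v + o_{\dot{\mathcal{E}}_{+\infty}}(1) \quad \text{as } t \to +\infty.
\]

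The key observation is that $\chi := i_- i_+$ is smooth and compactly supported in $x$: $i_+$ vanishes on a neighbourhood of $-\infty$ and $i_-$ on a neighbourhood of $+\infty$, so their product is supported in a compact interval $K \subset \mathbb{R}$. On $K$ the coefficients appearing in the norms $\|\cdot\|_{\dot{\mathcal{E}}}$ and $\|\cdot\|_{\dot{\mathcal{E}}_{+\infty}}$ are all smooth with $\Delta_r$ bounded away from zero; moreover the weight $w^{-\epsilon} = ((r-r_-)(r_+-r))^{\epsilon/2}$ is bounded below by a positive constant there. A direct localization argument---applying Leibniz to $\partial_x(\chi \phi_0)$ and exploiting the pointwise comparability of $h$ with $h_{+\infty}$ (and of $k$ with $k_{+\infty}$) on compact subsets of $(r_-,r_+)$---therefore yields, for some $\epsilon>0$ and $C>0$ depending only on $\chi$,
\[
\|\chi \phi\|_{\dot{\mathcal{E}}_{+\infty}} \leq C \,\|w^{-\epsilon} \phi\|_{\dot{\mathcal{E}}}, \qquad \phi \in \dot{\mathcal{E}}^n.
\]

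Specializing to $\phi = e^{it\dot{H}^n}v$ and invoking Lemma \ref{propEstGen} for the full $\dot{H}$ dynamics, we obtain
\[
\|(i_-i_+)\, e^{it\dot{H}^n}v\|_{\dot{\mathcal{E}}_{+\infty}} \leq C \,\|w^{-\epsilon} e^{it\dot{H}^n}v\|_{\dot{\mathcal{E}}} \xrightarrow[t\to +\infty]{} 0,
\]
which, combined with the asymptotic equality above, gives $\|i_- e^{it\dot{H}_{+\infty}}u\|_{\dot{\mathcal{E}}_{+\infty}} \to 0$, i.e.\ $S(u)=0$. The only real technical work is the norm-comparison estimate for the compactly supported cutoff $\chi$; this is essentially routine given the smoothness and non-degeneracy of the metric coefficients on compact subsets of $(r_-, r_+)$, but it is the step I expect to require the most care.
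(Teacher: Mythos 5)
Your proposal is correct and follows essentially the same route as the paper: write $u = e^{-it\dot{H}_{+\infty}}i_+e^{it\dot{H}}v + \epsilon(t)$, use the uniform boundedness (unitarity) of $i_-e^{it\dot{H}_{+\infty}}$ to dispose of $\epsilon(t)$, and kill the remaining term $i_-i_+e^{it\dot{H}}v$ via Lemma \ref{propEstGen} for the $\dot{H}$ dynamics, using that the compactly supported cutoff $i_-i_+$ is dominated by $w^{-\epsilon}$. The only difference is that you spell out the norm-comparison estimate for the compactly supported cutoff, which the paper leaves implicit.
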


\begin{proof}
Let $u\in Ran(\Omega_{+\infty})$. There exists $v\in \dot{\mathcal{E}}^n$ such that (in $\dot{\mathcal{E}}^n_{+\infty}$):
\[ u= \lim\limits_{t\to +\infty} e^{-it\dot{H}_{+\infty}}i_+ e^{it\dot{H}}v \]
We write \[ u = e^{-it\dot{H}_{+\infty}}i_+ e^{it\dot{H}}v + \epsilon(t) \]
whith $\lim\limits_{t \to +\infty}\| \epsilon(t) \|_{\dot{\mathcal{E}}_{+\infty}} = 0$.
Let $t\in \mathbb{R}$.
By uniform boundedness of $i_-e^{it\dot{H}^n_{+\infty}}$ on $\dot{\mathcal{E}}^n_{+\infty}$:
\begin{align*}
\| i_- e^{it\dot{H}_{+\infty}} u \|_{\dot{\mathcal{E}}_{+\infty}} &\leq \|i_-i_+ e^{it\dot{H}}v\|_{\dot{\mathcal{E}}_{+\infty}} + C\|\epsilon(t)\|_{\dot{\mathcal{E}}_{+\infty}}
\end{align*}
The limit of the right hand side when $t\to +\infty$ is zero by lemma \ref{propEstGen}.
\end{proof}

Finally we can use together proposition \ref{genIneq}, lemma \ref{SRanOmega} and lemma \ref{infIneq} to prove the following proposition, which was the goal of this subsection.
\begin{prop}
We assume $n\neq 0$ or $m\neq 0$.
Then, there exists $C>0$ such that
For all $u\in \dot{\mathcal{E}}$
\[
\|u\|_{\dot{\mathcal{E}}}\leq C(\|\Omega_{T,+} u \|_{\dot{\mathcal{E}}_{T,+}} + \|\Omega_{T,-} u\|_{\dot{\mathcal{E}}_{T,-}})
\]
In particular $\Omega$ is injective.
\end{prop}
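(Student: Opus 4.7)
The plan is to chain together three earlier ingredients: the global decomposition estimate (Lemma \ref{infIneq}), the one-sided estimate with error term $S$ (Proposition \ref{genIneq}), and the vanishing of $S$ on the range of $\Omega_{+\infty}$ (Lemma \ref{SRanOmega}). The overall strategy is to reduce the bound for $\|u\|_{\dot{\mathcal{E}}}$ first to the horizon pieces $\|\Omega_{\pm\infty} u\|_{\dot{\mathcal{E}}_{\pm\infty}}$, and then to replace each of those by the transport wave operators $\|\Omega_{T,\pm} u\|_{\dot{\mathcal{E}}_{T,\pm}}$ by absorbing the residual term $S$ using that it vanishes on ranges of the comparison wave operators.

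More concretely, fix $u \in \dot{\mathcal{E}}^n$. First I would invoke Lemma \ref{infIneq} to obtain
\[
\|u\|_{\dot{\mathcal{E}}} \;\leq\; C\bigl(\|\Omega_{+\infty}u\|_{\dot{\mathcal{E}}_{+\infty}}+\|\Omega_{-\infty}u\|_{\dot{\mathcal{E}}_{-\infty}}\bigr).
\]
Then for each sign I apply Proposition \ref{genIneq} to the element $v_\pm := \Omega_{\pm\infty} u \in \dot{\mathcal{E}}^n_{\pm\infty}$, which yields
\[
\|\Omega_{\pm\infty}u\|_{\dot{\mathcal{E}}_{\pm\infty}} \;\leq\; C\bigl(\|\Omega_{T,\pm}^{\pm\infty}\,\Omega_{\pm\infty}u\|_{\mathcal{E}_{T,\pm}}+S(\Omega_{\pm\infty}u)\bigr).
\]
By the factorization $\Omega_{T,\pm}=\Omega_{T,\pm}^{\pm\infty}\,\Omega_{\pm\infty}$ used earlier to establish the existence of the inverse wave operators, the first term is exactly $\|\Omega_{T,\pm}u\|_{\mathcal{E}_{T,\pm}}$. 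Lemma \ref{SRanOmega} (and its $-\infty$ analogue, which is proved in exactly the same way) tells us that $S(\Omega_{\pm\infty}u)=0$, so the residual term disappears. Combining with the previous inequality gives
\[
\|u\|_{\dot{\mathcal{E}}} \;\leq\; C'\bigl(\|\Omega_{T,+}u\|_{\dot{\mathcal{E}}_{T,+}}+\|\Omega_{T,-}u\|_{\dot{\mathcal{E}}_{T,-}}\bigr),
\]
which is the desired estimate. Injectivity of $\Omega$ is then immediate: if $\Omega u = 0$ then the right-hand side vanishes, hence $u=0$.

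There is no hard new computation here; the whole step is essentially a bookkeeping of the previously established estimates. The one mild subtlety is to make sure that Lemma \ref{SRanOmega}, stated for $\Omega_{+\infty}$, has a valid $-\infty$ counterpart under the hypothesis $n\neq 0$ or $m>0$ (the only place hypotheses were used was to make $i_\pm$ bounded and to justify Lemma \ref{propEstGen}); since the argument is symmetric in $\pm$, this is handled by the same proof. The assumption $n\neq 0$ or $m>0$ is needed throughout because Proposition \ref{genIneq} and the definition of $S$ rely on this hypothesis in the underlying Hardy-type controls.
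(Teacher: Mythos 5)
Your proposal is correct and follows exactly the same route as the paper: Lemma \ref{infIneq}, then Proposition \ref{genIneq} applied to $\Omega_{\pm\infty}u$ together with the factorization $\Omega_{T,\pm}=\Omega_{T,\pm}^{\pm\infty}\Omega_{\pm\infty}$, and finally Lemma \ref{SRanOmega} to kill the residual term $S(\Omega_{\pm\infty}u)$. The paper likewise dispatches the $-\infty$ case with the remark that the corresponding results hold by symmetry, so your handling of that point matches as well.
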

\begin{proof}
Let $u\in \dot{\mathcal{E}}$, by lemma \ref{infIneq} we get
\[ \|u\|_{\dot{\mathcal{E}}} \leq C\left(\|\Omega_{+\infty}u\|_{\dot{\mathcal{E}}_{+\infty}}+\|\Omega_{-\infty}u\|_{\dot{\mathcal{E}}_{-\infty}}\right) \]
Then by lemma \ref{genIneq} we have
\[ \|\Omega_{+\infty}u\|_{\dot{\mathcal{E}}_{+\infty}}\leq C\left(\|\Omega^{+\infty}_{T,+}\Omega_{+\infty}u\|_{\dot{\mathcal{E}}_{T,+}} + S(\Omega_{+\infty}u)\right) \]
and by lemma \ref{SRanOmega}, $S(\Omega_{+\infty}u) = 0$. Corresponding results for indices $-$ enable to conclude the proof.
\end{proof}

\section{Main theorem}

\subsection{Trace operator}
By Leray's theorem, for $u^{init} \in (C^{\infty}_0(\Sigma_0))^2\cap (\mathcal{Y}^n)^2$, there exists a unique solution in $C^{\infty}_0(\mathcal{M})$ to the equation:
\[
\begin{cases}
(\square + m^2)u = 0 \\
u(0) = u^{init}_0 \\
\frac{1}{i} \partial_t u(0) = u^{in}_1
\end{cases}
\]
Moreover, this solution extends smoothly to the horizons. To see that, we can take a spacelike hypersurface $\tilde{\Sigma}_0$ (in the maximal extension of Kerr De Sitter space-time, see \cite{borthwick}) extending $\Sigma_0$ such that its domain of dependence contains the horizon (we can proceed one horizon at a time and use the kruskal domains). Then by applying Leray theorem, we find a smooth extension of our solution to an open set containing the horizon.
In particular, we can define the following trace operators
\begin{definition}
\[
\mathcal{T}_{\pm}:
\begin{cases}
(C^{\infty}_0)^2\cap (\mathcal{Y}^n)^2 \rightarrow C^{\infty}(\mathfrak{H}^{future}_{\pm}) \\
u^{init} \mapsto u_{|\mathfrak{H}^{future}_{\pm}}
\end{cases}
\]
\end{definition}
\begin{remark}
Note that on $\mathfrak{H}^{future}_{\pm}$, $u$ and $D_t u$ are not independent since $\partial_t$ is tangent to $\mathfrak{H}^{future}_{\pm}$. Therefore, we do not need to take the trace of $D_t u$.
\end{remark}
\begin{remark}
Because $\partial_\phi$ is a Killing vector field, $D_\phi - n$ commutes with $\square_g + m^2$. Then, if $u$ is a smooth solution of the Klein-Gordon equation, $(D_\phi - n)u$ is also a solution. In particular, if the initial data belong to $(\mathcal{Y}^n)^2$, $(D_\phi -n)u = 0$ on the whole domain of dependance.
\end{remark}
\begin{remark}
We could do the same construction for the past horizon but in this work we focus on the future horizon.
\end{remark}
We emphasize that, thanks to the finite speed of propagation, we can apply our two lemmas \ref{continuity} and \ref{uniqueness} to prove that for all $t\in \mathbb{R}$:
\[
e^{it\dot{H}^n}u^{init} = \begin{pmatrix} u(t) \\ D_t u(t) \end{pmatrix}
\] 

\subsection{Energy spaces on the horizons}
We begin by defining natural ways to identify $\Sigma_0$ with the horizons.
\begin{definition}
We define \[\mathfrak{F}_{+}:
\begin{cases}
\Sigma_0 \rightarrow  \mathfrak{H}^{future}_+ \\
(0, r, \theta, \phi) \mapsto (-T(r), r_+, \theta, \phi - A(r))_{{}^*Kerr}
\end{cases} \]
which maps the intersection of an outgoing principal null geodesic with $\Sigma_0$ to its intersection with $\mathfrak{H}^{future}_+$.
Similarly we define $\mathfrak{F}_{-}$.
\end{definition}
\begin{prop}
$\mathfrak{F}_{\pm}$ are diffeomorphisms.
\end{prop}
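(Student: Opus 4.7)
The plan is to unpack $\mathfrak{F}_+$ in coordinates and check directly that it is a smooth bijection with smooth inverse; the case of $\mathfrak{F}_-$ is identical up to signs. Explicitly, under the identification $\Sigma_0 \cong (r_-,r_+)\times\mathbb{S}^2$ and $\mathfrak{H}^{future}_+ \cong \mathbb{R}_{{}^*t}\times\mathbb{S}^2$, the map reads
\[
\mathfrak{F}_+(r,\theta,\phi) = \bigl(-T(r),\,\theta,\,\phi - A(r)\bigr).
\]
Since $\Delta_r > 0$ on $(r_-,r_+)$, both integrands $\lambda(a^2+r^2)/\Delta_r$ and $\lambda a/\Delta_r$ are smooth on this interval, so $T,A\in C^\infty((r_-,r_+))$, and $\mathfrak{F}_+$ is smooth (the $\mathbb{S}^1$-valued shift $\phi\mapsto \phi - A(r)$ is always a diffeomorphism of $\mathbb{S}^1$).

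For bijectivity and smoothness of the inverse, the key point is that $-T:(r_-,r_+)\to\mathbb{R}$ is a smooth diffeomorphism. Indeed, the already-noted remark says $T$ is a homeomorphism onto $\mathbb{R}$, and
\[
T'(r) = \frac{\lambda(a^2+r^2)}{\Delta_r} > 0 \quad\text{on } (r_-,r_+),
\]
so the inverse function theorem gives $T^{-1}\in C^\infty(\mathbb{R},(r_-,r_+))$. Consequently, the candidate inverse
\[
\mathfrak{F}_+^{-1}({}^*t,\theta,{}^*\phi) = \bigl(T^{-1}(-{}^*t),\,\theta,\,{}^*\phi + A(T^{-1}(-{}^*t))\bigr)
\]
is a composition of smooth maps, and a direct substitution confirms $\mathfrak{F}_+\circ \mathfrak{F}_+^{-1} = \mathrm{Id}$ and $\mathfrak{F}_+^{-1}\circ \mathfrak{F}_+ = \mathrm{Id}$.

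The same argument, with $T$ replaced by $-T$ and $A$ by $-A$ in the appropriate places (reflecting that $\mathfrak{H}^{future}_-$ uses Kerr${}^*$ coordinates rather than ${}^*$Kerr coordinates), shows that $\mathfrak{F}_-$ is a diffeomorphism. There is no real obstacle here: once one observes that the whole content of the statement reduces to $T$ being a smooth diffeomorphism of $(r_-,r_+)$ onto $\mathbb{R}$, everything else is routine composition.
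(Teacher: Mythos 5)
Your proof is correct and follows essentially the same route as the paper: write $\mathfrak{F}_+$ explicitly in coordinates, note smoothness of $T$ and $A$ from $\Delta_r>0$, and exhibit the explicit smooth inverse $({}^*t,\theta,{}^*\phi)\mapsto (T^{-1}(-{}^*t),\theta,{}^*\phi+A(T^{-1}(-{}^*t)))$. The only difference is that you justify the smoothness of $T^{-1}$ via $T'>0$ and the inverse function theorem, a detail the paper leaves implicit.
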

\begin{proof}
The explicit expression shows that $\mathfrak{F}_{\pm}$ are $C^{\infty}$. We can also find the $C^{\infty}$ inverse:
 \[(\mathfrak{F}_{+})^{-1} = 
\begin{cases}
\mathfrak{H}^{future}_+\rightarrow \Sigma_0 \\
({}^*t, r_+, \theta, {}^*\phi) \mapsto (0, T^{-1}(-{}^*t), \theta, {}^*\phi + A(T^{-1}(-{}^*t)))
\end{cases} \]
and similarly for $\mathfrak{F}_{-}$
\end{proof}
\begin{definition}
\[
\mathcal{F}_{-}:
\begin{cases}
C^{\infty}_0(\mathfrak{H}^{future}_{-})\cap Ker(\partial_{\phi^{*}} - n) \rightarrow \mathcal{E}^{l}_{T,-} \\
u \mapsto \begin{pmatrix}(\mathfrak{F}_{-})^* u \\ i w_{-}(\mathfrak{F}_{-})^* u\end{pmatrix}
\end{cases}
\]
\[
\mathcal{F}_{+}:
\begin{cases}
C^{\infty}_0(\mathfrak{H}^{future}_{+})\cap Ker(\partial_{{}^*\phi} - n) \rightarrow \mathcal{E}^{r}_{T,+} \\
u \mapsto \begin{pmatrix}(\mathfrak{F}_{+})^* u \\ i w_{+}(\mathfrak{F}_{+})^* u\end{pmatrix}
\end{cases}
\]
\end{definition}
\label{energyHorizon}
We also define energy spaces on the horizon $\mathfrak{H}^{future}_-$ (resp. $\mathfrak{H}^{future}_+$) by transporting the norm on $\mathcal{E}^l_{T,-}$ (resp. $\mathcal{E}^r_{T,+}$). More explicitly, we define for $u \in C^{\infty}_0(\mathfrak{H}^{future}_+)\cap Ker(\partial_{{}^*\phi} - n)$ and $v \in C^{\infty}_0(\mathfrak{H}^{future}_-)\cap ker(\partial_{\phi^*} -n)$
\begin{align*}
\|\phi \|_{\mathcal{E}^n_{\mathfrak{H}_+}} &:= \left\|\mathcal{F}_{+} u\right\|_{\dot{\mathcal{E}}_{T,+}}\\
\|\phi \|_{\mathcal{E}^n_{\mathfrak{H}_-}} &:= \left\|\mathcal{F}_{-} v\right\|_{\dot{\mathcal{E}}_{T,-}}
\end{align*}
We define $\mathcal{E}^n_{\mathfrak{H}_{+}}$ (resp. $\mathcal{E}^n_{\mathfrak{H}_{-}}$)  by completion of $C^{\infty}_0(\mathfrak{H}^{future}_{+})\cap Ker(\partial_{{}^*\phi} - n)$ (resp. $C^{\infty}_0(\mathfrak{H}^{future}_{-})\cap Ker(\partial_{\phi^{*}} - n)$) for the corresponding norm. Then, $\mathcal{F}_{+/-}$ extend to surjective isometries from $\mathcal{E}^n_{\mathfrak{H}_{+/-}}$ to $\mathcal{E}^{r/l}_{T,+/-}$.
\begin{remark}
We can compute explicitly the norms $\|.\|_{\mathcal{E}^n_{\mathfrak{H}_\pm}}$ and we find, for $u \in C^{\infty}_0(\mathfrak{H}^{future}_{+})\cap Ker(\partial_{{}^*\phi} - n)$ and $v\in C^{\infty}_0(\mathfrak{H}^{future}_{-})\cap Ker(\partial_{\phi^{*}} - n)$
\begin{align*}
\|u\|^2_{\mathcal{E}^n_{\mathfrak{H}_{+}}} &= 2\int_{\mathfrak{H}^{future}_{+}} \left|\partial_{{}^*t} u + \frac{ian}{a^2+r_+^2}u\right|^2\dd {}^*t \dd {}^*\omega \\
\|v\|^2_{\mathcal{E}^n_{\mathfrak{H}_{-}}} &= 2\int_{\mathfrak{H}^{future}_{-}} \left|\partial_{t^*} u + \frac{ian}{a^2+r_-^2}u \right|^2 \dd t^* \dd \omega^*
\end{align*}
If $u$ is the trace of a function defined on $\bar{\mathcal{M}}$, $\frac{1}{2}\|u\|_{\mathcal{E}^n_{\mathfrak{H}_{+}}}$ correspond to the flux of the contraction of $T(u)$ with $X$ through $\mathfrak{H}^{future}_+$ as mentioned in section \ref{mainRes}.
\end{remark}
\begin{definition}
We define the map
\[
\mathcal{F}:
\begin{cases}
\mathcal{E}^n_{\mathfrak{H}_{-}}\oplus \mathcal{E}^n_{\mathfrak{H}_+} \rightarrow \mathcal{P}\\
(u^l, u^r) \mapsto \left( \mathcal{F}_{-} u^l, \mathcal{F}_{+} u^r \right)
\end{cases}
\]
This application is a surjective isometry between $\mathcal{P}$ and the profile subspace
\[
\mathcal{P}_{\mathfrak{H}} := \mathcal{E}^n_{\mathfrak{H}_{-}}\oplus \mathcal{E}^n_{\mathfrak{H}_+}
\]
\end{definition}

\subsection{Proof of the main theorem}
In this section, we will show the following theorem:

\begin{theorem}
\label{mainTheo}
For all $u \in (C^{\infty}_0(\Sigma_0) \cap \mathcal{Y}^n)^2$ ,
\begin{align*}
\mathcal{F}^{-1} \Omega u &= (\mathcal{T}_- u, \mathcal{T}_+ u)
\end{align*}
Therefore, the trace operator $\mathcal{T}$ extends uniquely as a bounded operator from $\dot{\mathcal{E}}^n$ to $\mathcal{P}_{\mathfrak{H}}$ (this extension is in fact $\mathcal{F}^{-1} \Omega$). Moreover, this operator is invertible.
\end{theorem}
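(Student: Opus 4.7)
The plan is to establish the identity $\mathcal{F}^{-1}\Omega u = (\mathcal{T}_-u, \mathcal{T}_+u)$ for smooth compactly supported initial data, then extend continuously to all of $\dot{\mathcal{E}}^n$ and deduce invertibility from theorem \ref{inversion}. By symmetry it suffices to show $\Omega_{T,+}u = \mathcal{F}_+(\mathcal{T}_+u)$, the $-$ case being identical.

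Fix $u\in (C^{\infty}_0(\Sigma_0)\cap\mathcal{Y}^n)^2$ and let $f\in C^{\infty}(\bar{\mathcal{M}})$ be the Klein-Gordon solution whose Cauchy data on $\Sigma_0$ are $u$, as constructed via Leray's theorem; thus $e^{it\dot{H}^n}u = (f(t), D_t f(t))$ and the trace $\mathcal{T}_+u = f|_{\mathfrak{H}^{future}_+}$ is a smooth function in ${}^*$Kerr coordinates. On the comparison side, for $v\in \mathcal{E}^r_{T,+}\cap (C^{\infty}_0)^2$ the Kirchhoff formula (proposition \ref{kirchoff}) reduces to $e^{it\dot{H}_{T,+}}v = (e^{-tw_+}v_0, -\tfrac{w_+}{i}e^{-tw_+}v_0)$, so on $\mathcal{E}^r_{T,+}$ the dynamics $e^{it\dot{H}_{T,+}}$ coincides with the outgoing null transport encoded in the diffeomorphism $\mathfrak{F}_+$.

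The main step is to compare, in $\dot{\mathcal{E}}_{T,+}$, the Klein-Gordon evolution $i_+^2 e^{it\dot{H}^n}u$ localized near $r_+$ with the outgoing transport $e^{it\dot{H}_{T,+}}\mathcal{F}_+(\mathcal{T}_+u)$ as $t\to+\infty$. Both objects are asymptotic (in the $i_+$-localized sense) to the data that $f$ deposits on $\mathfrak{H}^{future}_+$: the smooth extension of $f$ to the horizon means that, in ${}^*$Kerr coordinates, $f(T(r)+{}^*t, r, \theta, {}^*\phi + A(r))$ converges as $r\nearrow r_+$ to $\mathcal{T}_+u({}^*t, \theta, {}^*\phi)$. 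By the decay estimates from the proof of theorem \ref{existenceDirect} (which exploit $(l-l_+) = O(e^{-\kappa_+ x})$ and the corresponding decay of every coefficient in $\delta$), the difference between $i_+ e^{it\dot{H}^n}u$ and the pure outgoing transport of the trace profile has $\dot{\mathcal{E}}_{T,+}$-norm integrable in $t$. The limit $\Omega_{T,+}u$ can therefore be computed by pulling the asymptotic profile back by $e^{-it\dot{H}_{T,+}}$; the explicit inversion of the transport given by proposition \ref{transport} shows that this pullback is exactly $\mathcal{F}_+(\mathcal{T}_+u)$, because the integrating factor $e^{i\int l(s)\,\mathrm{d}s}$ and the shift of the $\phi$ variable encode precisely the twist $\phi - A(r)$ built into $\mathfrak{F}_+$.

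The principal difficulty lies in matching the asymptotic profile of the Klein-Gordon solution with the explicit form of the comparison transport under the diffeomorphism $\mathfrak{F}_+$; this is a careful but essentially bookkeeping step using Leray's smooth extension and the Kirchhoff formula. Once $\mathcal{F}^{-1}\Omega u = (\mathcal{T}_-u, \mathcal{T}_+u)$ is known on smooth data, continuity of $\mathcal{F}^{-1}\Omega : \dot{\mathcal{E}}^n \to \mathcal{P}_{\mathfrak{H}}$ (as $\mathcal{F}$ is a surjective isometry and $\Omega$ is bounded), together with density of $(C^{\infty}_0(\Sigma_0)\cap\mathcal{Y}^n)^2$ in $\dot{\mathcal{E}}^n$, yields the unique continuous extension of $\mathcal{T}$. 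Invertibility is automatic: theorem \ref{inversion} gives $\Omega W = \mathrm{id}_{\mathcal{P}}$ and $W\Omega = \mathrm{id}_{\dot{\mathcal{E}}^n}$, so the inverse of the extended trace is the bounded map $W\mathcal{F} : \mathcal{P}_{\mathfrak{H}}\to \dot{\mathcal{E}}^n$.
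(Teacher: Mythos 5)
Your overall skeleton (reduce to the $+$ case on smooth data, identify the comparison dynamics with pure transport on $\mathcal{E}^r_{T,+}$, match the trace via $\mathfrak{F}_+$, then extend by density and invert via theorem \ref{inversion}) matches the paper, but the central analytic step is asserted rather than proved, and the justification you offer does not apply. You claim that the difference between $i_+e^{it\dot{H}^n}u$ and $e^{it\dot{H}_{T,+}}\mathcal{F}_+(\mathcal{T}_+u)$ is integrable in $t$ ``by the decay estimates from the proof of theorem \ref{existenceDirect}.'' Those estimates bound $\|\delta\, e^{it\dot{H}_{T,+}}v\|$ for $v$ a \emph{compactly supported initial datum of the comparison dynamics}, using the support-propagation lemma \ref{propagation}; they say nothing about the discrepancy between the Klein--Gordon evolution and the transport of its \emph{horizon trace}, which is a different object. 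Worse, $\mathcal{T}_+u$ is smooth on $\mathfrak{H}^{future}_+$ but not compactly supported there, so $\mathcal{F}_+(\mathcal{T}_+u)$ is not even known to be an element of $\dot{\mathcal{E}}^n_{T,+}$ at the point where you first use it; the finiteness of $\|\mathcal{T}_+u\|_{\mathcal{E}^n_{\mathfrak{H}_+}}$ is one of the \emph{conclusions} of the theorem, obtained in the paper only after identifying the trace with $\Omega_+u\in\mathcal{E}^r_{T,+}$. As written, your argument is circular on this point.

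The paper closes this gap by a different mechanism that you should adopt. First (lemma \ref{comparaisonTransport}), using the already-established inclusion $Ran(\Omega_+)\subset\mathcal{E}^r_{T,+}$ from proposition \ref{RanOmega}, one replaces $e^{it\dot{H}_{T,+}}\Omega_+u$ by the pure transport $e^{-tW_+}\Omega_+u$ and deduces that $\Omega_+u=\lim_{t\to+\infty}e^{tW_+}i_+e^{it\dot{H}}u$ in $\dot{\mathcal{E}}^n_{T,+}$; in particular the norm limit is known to \emph{exist} before it is identified. Second, the quantity $e^{tW_+}i_+e^{it\dot{H}}u$ evaluated at $(x,\theta,\phi)$ is literally the Leray-extended solution evaluated at a point of $\bar{\mathcal{M}}$ whose ${}^*$Kerr coordinates converge, as $t\to+\infty$, to a point of $\mathfrak{H}^{future}_+$; smoothness of the extension then gives the \emph{pointwise} limit $((\mathfrak{F}_+)^*\mathcal{T}_+u,\ (\mathfrak{F}_+)^*D_t\mathcal{T}_+u)$ with no estimate needed. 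Combining the pointwise identification with the a priori existence of the $L^2$ limits of $(\partial_x+i(l-l_+))(\cdot)_0$ and $(\cdot)_1+l_+(\cdot)_0$ upgrades the convergence to $\dot{\mathcal{E}}^n_{T,+}$ and simultaneously shows $\mathcal{T}_+u\in\mathcal{E}^n_{\mathfrak{H}_+}$. You need some version of these two steps; without them the ``bookkeeping'' you defer is precisely the missing proof.
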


\begin{lemma}
\label{transport2}
We define the operator valued matrix
\[
W_+ := \begin{pmatrix} w_+ & 0 \\ 0 & w_+ \end{pmatrix}
\]
For $u\in (C^{\infty}_0(\Sigma_0)\cap \mathcal{Y}^n)^2$,
\[
\|e^{-tW_+ } u \|_{\dot{\mathcal{E}}_{T,+}} = \| u\|_{\dot{\mathcal{E}}_{T,+}}
\]
So $e^{-W_+ t}$ extends to a unitary $C^0$-semi group on $\dot{\mathcal{E}}^n_{T,+}$.
Moreover, for $u\in \mathcal{E}^r_{T,+}$,
\[
e^{itH_{T,+}} u = e^{-tW_+} u
\]
\end{lemma}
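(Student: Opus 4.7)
The plan is to prove the two assertions in succession, working first on the smooth compactly supported data and then extending by the density result of Lemma \ref{densityLR} together with the continuity of the operators involved.

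For the isometry claim, I would rewrite the homogeneous norm as
\[
\|u\|^2_{\dot{\mathcal{E}}_{T,+}} = \|u_1 + l_+ u_0\|^2_{L^2} + \|(\partial_x + i(l-l_+))u_0\|^2_{L^2},
\]
using $h_{0,T,+} = -(\partial_x + i(l-l_+))^2$. The key observation is that $\partial_x + i(l-l_+) = w_+ - il_+$ differs from $w_+$ only by the scalar $il_+$, so it commutes with $w_+$ and hence with $e^{-tw_+}$. Since $\tfrac{w_+}{i}$ is self-adjoint on $L^2$, $e^{-tw_+}$ is unitary on $L^2$. Applying $e^{-tW_+}$ componentwise, both $u_1 + l_+u_0$ (scalar $l_+$ passes through $e^{-tw_+}$) and $(\partial_x + i(l-l_+))u_0$ are preserved in $L^2$ norm, giving the isometry on $(C^\infty_0 \cap \mathcal{Y}^n)^2$. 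The extension to a unitary $C^0$-semigroup on $\dot{\mathcal{E}}^n_{T,+}$ then follows from the density of smooth compactly supported data and the standard BLT argument, the strong continuity in $t$ being inherited from the $L^2$-level statement of Proposition \ref{transport}.

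For the second assertion, I would feed smooth compactly supported $u \in \mathcal{E}^r_{T,+}$ into the Kirchhoff formula of Proposition \ref{kirchoff}. The condition $\tfrac{w_+}{i}u_0 + u_1 = 0$ gives $u_1 + l_+u_0 = i(\partial_x + i(l-l_+))u_0$, and I would recognize the integrand defining $\tilde{u}_1$ as the exact derivative
\[
\partial_s\bigl[e^{-i\int_s^x (l-l_+)}u_0(s)\bigr] = e^{-i\int_s^x (l-l_+)}\bigl(\partial_s + i(l-l_+)\bigr)u_0(s),
\]
so that $\tilde{u}_1(x) = u_0(x)$ after using compact support to kill the boundary term at $-\infty$. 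The same observation at $x=+\infty$ verifies the integrability hypothesis \eqref{conditionInt}. Plugging $\tilde{u}_1 = u_0$ into Kirchhoff collapses the $\tilde{w}_-$ branch, yielding
\[
e^{it\dot{H}_{T,+}}u = \begin{pmatrix} e^{-tw_+}u_0 \\ -\tfrac{w_+}{i}e^{-tw_+}u_0\end{pmatrix} = e^{-tW_+}u
\]
after commuting $w_+$ past $e^{-tw_+}$ and using $u_1 = -\tfrac{w_+}{i}u_0$. The identity extends to all of $\mathcal{E}^r_{T,+}$ by Lemma \ref{densityLR}, since both sides are continuous in $u \in \dot{\mathcal{E}}^n_{T,+}$ for fixed $t$ (the left side by the group property of $e^{it\dot{H}_{T,+}}$, the right side by the isometry just proved).

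The main obstacle, such as it is, is bookkeeping rather than conceptual: I must check that the compactly supported representatives of $\mathcal{E}^r_{T,+}$ satisfy the Kirchhoff integrability condition (it does, by the same total-derivative calculation), and that the approximation of a general element of $\mathcal{E}^r_{T,+}$ by smooth compactly supported elements can be done \emph{within} $\mathcal{E}^r_{T,+}$, which is exactly the content of Lemma \ref{densityLR}. Once those are in place, the rest is transport of a scalar along $w_+$.
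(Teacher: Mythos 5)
Your proof is correct and follows the same route as the paper's: unitarity of $e^{-tw_+}$ on $L^2$ together with its commutation with $\partial_x+i(l-l_+)$ (which differs from $w_+$ by the constant $-il_+$) gives the isometry, and the Kirchhoff formula --- where the condition $\tfrac{w_+}{i}u_0+u_1=0$ forces $\tilde{u}_1=u_0$ and kills the $\tilde{w}_-$ branch --- combined with the density lemma \ref{densityLR} gives the identification on $\mathcal{E}^r_{T,+}$. The paper's proof is a two-line sketch of exactly these steps; your version simply supplies the computations.
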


\begin{proof}
To prove the first assertion, we use the fact that $e^{tw_+}$ commutes with $h_{0,T,+}$ and is unitary on $L^2$.
To prove the second part, we see that the equality is true for $u\in (C^{\infty}_0)^2\cap \mathcal{E}^r_{T,+}$ (by the Kirchoff formula) and we use a density argument (lemma \ref{densityLR}) to conclude.
\end{proof}

An other lemma will be useful to emphasize the link between $e^{itH_{T,+}}$ and $e^{-tW_+}$
\begin{lemma}
\label{comparaisonTransport}
For all $u \in (C^{\infty}_0(\Sigma_0) \cap \mathcal{Y}^n)^2$, we have the following limit in $\dot{\mathcal{E}}^n_{T,+}$:
\[ \lim\limits_{t\to +\infty} e^{tW_+}i_+e^{it\dot{H}}u - e^{-it\dot{H}_{T,+}}i_+e^{it\dot{H}}u = 0 \]
In other words in $\dot{\mathcal{E}}^n_{T,+}$:
\[
\Omega_+ u = \lim\limits_{t \to +\infty}e^{tW_+}i_+e^{it\dot{H}}u
\]
\end{lemma}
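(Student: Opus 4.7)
Let $v(t) := i_+ e^{it\dot{H}^n}u$. By Proposition \ref{decomposition}, decompose $v(t) = v^l(t) + v^r(t)$ with $v^l(t) \in \mathcal{E}^l_{T,+}$ and $v^r(t) \in \mathcal{E}^r_{T,+}$. The remark following Lemma \ref{invariance} gives that $\mathcal{E}^r_{T,+}$ is invariant under $e^{it\dot{H}_{T,+}}$, and Lemma \ref{transport2} then yields $e^{-it\dot{H}_{T,+}}|_{\mathcal{E}^r_{T,+}} = e^{tW_+}|_{\mathcal{E}^r_{T,+}}$, so
\[
e^{tW_+}v(t) - e^{-it\dot{H}_{T,+}}v(t) = e^{tW_+}v^l(t) - e^{-it\dot{H}_{T,+}}v^l(t).
\]
Since $\dot{H}_{T,+}$ is self-adjoint on $\dot{\mathcal{E}}^n_{T,+}$ and $e^{tW_+}$ is unitary on $\dot{\mathcal{E}}^n_{T,+}$ (Lemma \ref{transport2}), the right-hand side has norm at most $2\|v^l(t)\|_{\dot{\mathcal{E}}^n_{T,+}}$. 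The computation in the proof of Lemma \ref{pythagore} shows $\Psi(v^r)=0$, $\Psi(v^l) = 2(v^l_1 + l_+v^l_0)$, and $\|v^l\|_{\dot{\mathcal{E}}_{T,+}}^2 = 2\|v^l_1 + l_+v^l_0\|_{L^2}^2$, so $\|v^l(t)\|_{\dot{\mathcal{E}}^n_{T,+}}^2 = \tfrac{1}{2}\|\Psi(v(t))\|_{L^2}^2$. The lemma therefore reduces to establishing
\[
\lim_{t\to+\infty}\|\Psi(i_+ e^{it\dot{H}^n}u)\|_{L^2} = 0.
\]

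I will factor this through the auxiliary dynamics $e^{it\dot{H}_{+\infty}}$. Proposition \ref{RanOmega} (and the density argument in its proof) gives $\Omega^{+\infty}_{T,+}w \in \mathcal{E}^r_{T,+} = \ker\Psi$ for every $w \in \dot{\mathcal{E}}^n_{+\infty}$. Combining the $L^2$-invariance of $\Psi$ along $e^{it\dot{H}_{T,+}}$ (Lemma \ref{invariance}) with the continuity of $\Psi : \dot{\mathcal{E}}^n_{T,+} \to L^2$,
\[
\|\Psi(i_+ e^{it\dot{H}_{+\infty}}w)\|_{L^2} = \|\Psi(e^{-it\dot{H}_{T,+}}i_+ e^{it\dot{H}_{+\infty}}w)\|_{L^2} \xrightarrow[t\to+\infty]{} \|\Psi(\Omega^{+\infty}_{T,+}w)\|_{L^2} = 0
\]
for every such $w$. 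Taking $w = \Omega_{+\infty}u$ yields $\|\Psi(i_+ e^{it\dot{H}_{+\infty}}\Omega_{+\infty}u)\|_{L^2}\to 0$. Moreover the defining convergence $e^{-it\dot{H}_{+\infty}}i_+ e^{it\dot{H}^n}u \to \Omega_{+\infty}u$ in $\dot{\mathcal{E}}^n_{+\infty}$, combined with the unitarity of $e^{it\dot{H}_{+\infty}}$, gives $i_+ e^{it\dot{H}^n}u - e^{it\dot{H}_{+\infty}}\Omega_{+\infty}u \to 0$ in $\dot{\mathcal{E}}^n_{+\infty}$. Applying in succession the bounded maps $i_+ : \dot{\mathcal{E}}^n_{+\infty} \to \dot{\mathcal{E}}^n_{T,+}$ (Lemma \ref{boundedness3}) and $\Psi : \dot{\mathcal{E}}^n_{T,+} \to L^2$, I conclude that $\|\Psi(i_+^2 e^{it\dot{H}^n}u)\|_{L^2} \to 0$.

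It remains to close the discrepancy between $i_+$ and $i_+^2$. Setting $\chi := i_+(1-i_+)$, which is smooth and compactly supported in $x$, one has $\Psi(i_+ e^{it\dot{H}^n}u) - \Psi(i_+^2 e^{it\dot{H}^n}u) = \Psi(\chi\, e^{it\dot{H}^n}u)$. On the compact support of $\chi$ all coefficients appearing in $\Psi(\chi\,\cdot\,)$ are smooth and bounded, so $\|\Psi(\chi w)\|_{L^2}$ is controlled by a local $H^1_x \oplus L^2$ norm of $w$ on $\operatorname{supp}\chi$, and hence by $\|\tilde{\chi} w\|_{\dot{\mathcal{E}}^n}$ for a slightly fatter compactly supported cutoff $\tilde{\chi}$. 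The propagation estimate of Lemma \ref{propEstGen} applied to the $\dot{H}^n$ dynamics (together with Lemma \ref{propFirstComp} to handle the zero-order component) forces $\|\tilde{\chi} e^{it\dot{H}^n}u\|_{\dot{\mathcal{E}}^n} \to 0$, completing the proof of the first statement. The ``in other words'' identification with $\Omega_+u$ then follows immediately, since $\lim e^{-it\dot{H}_{T,+}}i_+^2 e^{it\dot{H}^n}u = \Omega_{T,+}u$ exists and the same compact-support propagation argument shows that replacing $i_+^2$ by $i_+$ adds a vanishing term. I expect the only genuine bookkeeping obstacle to lie in the last step, namely cleanly controlling $\|\Psi(\chi w)\|_{L^2}$ by the ambient $\dot{\mathcal{E}}^n$-norm of a localized piece of the solution; all other ingredients are transparent compositions of results already established earlier in the paper.
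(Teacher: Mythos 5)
Your argument is correct in outline and reaches the conclusion, but it takes a genuinely different and substantially longer route than the paper. The paper's proof is three lines: since $e^{-it\dot{H}_{T,+}}i_+e^{it\dot{H}}u\to\Omega_+u$ and $e^{it\dot{H}_{T,+}}$ is uniformly bounded, one writes $i_+e^{it\dot{H}}u=e^{it\dot{H}_{T,+}}\Omega_+u+\epsilon(t)$ with $\epsilon(t)\to0$ in $\dot{\mathcal{E}}^n_{T,+}$; then $Ran(\Omega_+)\subset\mathcal{E}^r_{T,+}$ together with Lemma \ref{transport2} gives $e^{it\dot{H}_{T,+}}\Omega_+u=e^{-tW_+}\Omega_+u$, so the difference in the statement equals $e^{tW_+}\epsilon(t)-e^{-it\dot{H}_{T,+}}\epsilon(t)\to0$ by uniform boundedness of both groups. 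You instead reduce, via the splitting of Proposition \ref{decomposition} and the identity $\|v^l\|^2_{\dot{\mathcal{E}}_{T,+}}=\tfrac12\|\Psi(v)\|^2_{L^2}$ (which you verify correctly from Lemma \ref{pythagore}), to showing $\|\Psi(i_+e^{it\dot{H}}u)\|_{L^2}\to0$, and then prove this by chaining Lemma \ref{invariance}, $Ran(\Omega^{+\infty}_{T,+})\subset\ker\Psi$ from Proposition \ref{RanOmega}, the intertwining through $\Omega_{+\infty}$ and Lemma \ref{boundedness3}, and finally a local-decay argument to pass from $i_+^2$ back to $i_+$. All of these intermediate steps check out against results actually established in the paper. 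What your approach buys is transparency about a point the paper glosses over: the existence theorem for the inverse wave operators is stated with the cutoff $i_+^2$, while this lemma (and the paper's own proof of it) uses a single $i_+$; you make the comparison between the two explicit, whereas the paper silently identifies $\Omega_+u$ with $\lim e^{-it\dot{H}_{T,+}}i_+e^{it\dot{H}}u$.

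The one place where you owe more detail is precisely that comparison step. You need $\|\Psi(\chi\, e^{it\dot{H}}u)\|_{L^2}\to0$ (and, for the final identification with $\Omega_+u$, $\|\chi\, e^{it\dot{H}}u\|_{\dot{\mathcal{E}}_{T,+}}\to0$) for the compactly supported $\chi=i_+-i_+^2$, which amounts to local decay of $\|\mathbf{1}_K(e^{it\dot{H}}u)_1\|_{L^2}$, $\|\mathbf{1}_K\partial_x(e^{it\dot{H}}u)_0\|_{L^2}$ and, for the zero-order terms $\chi'u_0(t)$ and $l_+\chi u_0(t)$, of $\|\mathbf{1}_K(e^{it\dot{H}}u)_0\|_{L^2}$. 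Lemma \ref{propEstGen} is asserted for the $\dot{H}$ dynamics, but Lemma \ref{propFirstComp}, which you invoke for the zero-order component, is stated only for $\dot{H}_{\pm\infty}$; you should note that its proof (Hardy inequality plus the weighted estimate, using $n\neq0$ or $m>0$ to absorb the $L^2((-1,1))$ term into the potential part of the energy) transfers verbatim to $\dot{H}$, and likewise spell out the commutator bookkeeping needed to dominate $\|\chi\partial_x u_0(t)\|_{L^2}$ by $\|w^{-\epsilon}e^{it\dot{H}}u\|_{\dot{\mathcal{E}}}$ plus a localized $L^2$ term. None of this fails --- the required facts are true and provable with the tools already in the paper --- but as written these are citations to statements that do not quite cover the case you use them for.
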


\begin{proof}
Let $u \in (C^{\infty}_0(\Sigma_0) \cap \mathcal{Y}^n)^2$.
We use the strong limit property and the uniform boundedness of $e^{it\dot{H}_{T,+}}$ on $\mathcal{E}_{T,+}$ to replace:
\[ i_+e^{it\dot{H}}u = e^{it\dot{H}_{T,+}}\Omega_+ u + \epsilon(t) \]
where $\lim\limits_{t\to +\infty} \epsilon(t) = 0$ in $\dot{\mathcal{E}}^n_{T,+}$.
Finally, using that $Ran(\Omega_+) \subset \mathcal{E}^r_{T,+}$, we have
\[
e^{itH_{T,+}}\Omega_+ u= e^{-tW_+}\Omega_+ u
\]
We get:
\begin{align*}
e^{tW_+}i_+e^{it\dot{H}}u - e^{-itH_{T,+}}i_+e^{it\dot{H}}u &= e^{tW_+}\epsilon(t) - e^{-it\dot{H}_{T,+}}\epsilon(t)
\end{align*}
By the uniform boundedness of $e^{tW_+}$ and $e^{-it\dot{H}_{T,+}}$, we obtain that the limit is zero in $\dot{\mathcal{E}}^n_{T,+}$ and then:
 \[ \lim\limits_{t\to +\infty} e^{tW_+}i_+e^{it\dot{H}}u = \Omega_+ u \]
\end{proof}

\begin{proof}[Proof of the theorem]
Let $u \in (C^{\infty}_0(\Sigma_0) \cap \mathcal{Y}^n)^2$.
We have to prove the two following equalities
\begin{align*}
 \mathcal{F}_+^{-1} \Omega_+ u &= \mathcal{T}_+ u \\
\mathcal{F}_-^{-1} \Omega_- u &= \mathcal{T}_- u
\end{align*}
We only prove the $+$ case, the other being similar.
We denote by $\tilde{u}$ the solution to the Klein-Gordon equation on $\bar{\mathcal{M}}$ and $I_+: (t,r,\theta, \phi)\mapsto i_+(x(r))$ which has a smooth extension to $\bar{\mathcal{M}}$. For simplicity, we still call $I_+$ this extension. Note that $I_{+|_{\mathfrak{H}^{future}_+}} = 1$. 

We saw (thanks to Leray's theorem) that $i_+e^{it\dot{H}^n}u \in C^{\infty}(\mathbb{R}_x \times \mathbb{S}^2)\cap \dot{\mathcal{E}}^n_{T,+}$. We can compute explicitly
\begin{align*}
e^{tW_+}i_+ e^{it\dot{H}}u (x,\theta, \phi)&= i_+(x+t) (e^{it\dot{H}}u) (x+t, \theta, \phi - A(T^{-1}(x)) + A(T^{-1}(x+t))) \\
&= \left(I_+\begin{pmatrix}\tilde{u} \\D_t \tilde{u}\end{pmatrix}\right)(-x, T^{-1}(x+t), \theta, \phi - A(T^{-1}(x)))_{{}^*Kerr}
\end{align*}
The first equality comes from the explicit action of $e^{tW_+}$ on $C^{\infty}(\R_x\times\mathbb{S}^2)\cap \dot{\mathcal{E}}^n_{T,+}$ and the second line is the reformulation in ${}^*Kerr$ coordinates.
By letting $t\to +\infty$ in the previous equality, we have the pointwise limit:
\begin{align*}
\lim\limits_{t\to +\infty}e^{tW_+}i_+ e^{it\dot{H}}u (x,\theta, \phi) &= \begin{pmatrix}\tilde{u} \\D_t \tilde{u}\end{pmatrix}(-x, r_+, \theta, \phi-A(T^{-1}(x)))_{{}^*Kerr} \\
&=  \begin{pmatrix}(\mathfrak{F}_+)^*\mathcal{T}_+ u \\ (\mathfrak{F}_+)^* D_t \mathcal{T}_+ u\end{pmatrix}(x, \theta, \phi)
\end{align*}
It is only a pointwise limit but we can recover the limit in $\dot{\mathcal{E}}_{T,+}$ by checking in the same way that we have the pointwise convergences:
\begin{align*}
\lim\limits_{t\to +\infty}(\partial_x + i(l-l_+))(e^{tW_+}i_+ e^{it\dot{H}} u)_0 &= (\partial_x + i(l-l_+)) (\mathfrak{F}_+)^* \mathcal{T}_+ u \\
\lim\limits_{t\to +\infty} (e^{tW_+}i_+e^{it\dot{H}} u)_1 + l_+ (e^{tW_+}i_+e^{it\dot{H}} u)_0 &= (\mathfrak{F}_+)^* D_t\mathcal{T}_+ u + l_+ (\mathfrak{F}_+)^* \mathcal{T}_+ u
\end{align*}
Adding the fact that $(\partial_x + i(l-l_+))(e^{tW_+}i_+ e^{it\dot{H}} u)_0$ and $(e^{tW_+}i_+e^{it\dot{H}} u)_1 + l_+ (e^{tW_+}i_+e^{it\dot{H}} u)_0$ have a limit in $L^2$ (see lemma \ref{comparaisonTransport}), we deduce that 
$\begin{pmatrix}(\mathfrak{F}_+)^* \mathcal{T}_+ u \\ (\mathfrak{F}_+)^* D_t\mathcal{T}_+ u\\ \end{pmatrix}\in \dot{\mathcal{E}}^n_{T,+}$ and for the topology of $\dot{\mathcal{E}}^n_{T,+}$,
\[
\lim\limits_{t\to +\infty} e^{tW_+}i_+e^{it\dot{H}} u = \begin{pmatrix}(\mathfrak{F}_+)^* \mathcal{T}_+ u \\ (\mathfrak{F}_+)^* D_t\mathcal{T}_+ u\end{pmatrix}
\]
Then 
\[
\Omega_+ u = \begin{pmatrix}(\mathfrak{F}_+)^* \mathcal{T}_+ u \\  (\mathfrak{F}_+)^* D_t\mathcal{T}_+ u \end{pmatrix}
\]
The fact that $\Omega_+ u \in \mathcal{E}^r_{T,+}$ enables to show that $\left\| \mathcal{T}_+ u\right\|_{\mathcal{E}^n_{\mathfrak{H}_+}}< +\infty$ (the norm make sense on a smooth function). Then $\mathcal{T}_+ u$ can be identified with an element of $\mathcal{E}^n_{\mathfrak{H}_+}$ and $\mathcal{F}_+ \mathcal{T}_+ u = \Omega_+ u$.
We conclude the proof of the equality by applying $\mathcal{F}_+^{-1}$ to both sides.

From there, we deduce that $\mathcal{F}^{-1} \Omega$ is an extension of the trace and such an extension is unique by density of $(C^{\infty}_0\cap \mathcal{Y}^n)^2$ in $\dot{\mathcal{E}}^n$. The extension is invertible because we know explicitly the inverse of $\mathcal{F}^{-1} \Omega$ (it is $W \mathcal{F}$ by theorem \ref{inversion}).
\end{proof}

\begin{appendix}
\section{Appendix}
\subsection{General facts}
In this section, we state general elementary facts which can be understood independently of the other parts of this work, but are needed in the proofs.
The first result is about decomposition of Hilbert spaces. 
\begin{lemma}
\label{decompo}
Let $\mathcal{H}$, $\mathcal{H}_1$ and $\mathcal{H}_2$ be separable hilbert spaces such that $\mathcal{H}=\mathcal{H}_1\otimes\mathcal{H}_2$ (in the Hilbert sense). Let $A$ be a selfadjoint unbounded operator on $\mathcal{H}$ and assume that there exists $(e_i)_{i\in \mathbb{N}}$ a hilbert basis of $\mathcal{H}_2$ such that  $A$ commutes with all the orthogonal projections $\Pi_{i}$ on the closed subspaces $\mathcal{H}_1\otimes \C e_i$. Then: 
\begin{itemize}
\item For all $i\in \mathbb{N}$, $A$ induces selfadjoint operators $A_i$ on $\mathcal{H}_1$ by the unitary identification
\[
\phi_i: 
\begin{cases}
\mathcal{H}_1 \rightarrow \mathcal{H}_1 \otimes \C e_i \\
u \mapsto u\otimes e_i
\end{cases}
\]
\item For all borelian function $f$ on $\R$, 
$Dom(f(A)) = \oplus_{i\in \mathcal{N}} Dom(f(A_i))\otimes \C e_i$ (in the hilbert sense) where $Dom(f(A))$ and the $Dom(f(A_i))$ are endowed with the graph norm. In particular, if all the $A_i$ are equal to some $\tilde{A}$, $Dom(f(A)) = Dom(f(\tilde{A}))\otimes \mathcal{H}_2$(in the Hilbert sense).
\end{itemize}
\end{lemma}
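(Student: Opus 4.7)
The plan is to reduce the statement to the classical fact that a selfadjoint operator commuting with an orthogonal projection is reduced by that projection, and to then propagate the decomposition to Borel functions of $A$ via the spectral theorem.

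First I would make precise the commutation hypothesis: $A\Pi_i = \Pi_i A$ should be read as $\Pi_i\mathrm{Dom}(A)\subset\mathrm{Dom}(A)$ together with the identity on $\mathrm{Dom}(A)$. Under this assumption each $\mathcal{H}_1\otimes\mathbb{C}e_i$ is a reducing subspace of the selfadjoint $A$, hence by the standard reducing subspace theorem the restriction of $A$ to $\mathcal{H}_1\otimes\mathbb{C}e_i$, with domain $\mathrm{Dom}(A)\cap(\mathcal{H}_1\otimes\mathbb{C}e_i)$, is selfadjoint on $\mathcal{H}_1\otimes\mathbb{C}e_i$. Conjugating by the unitary $\phi_i$ defines the selfadjoint operator $A_i$ on $\mathcal{H}_1$ and settles the first bullet.

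For the second bullet, I would invoke the spectral theorem to write $A=\int\lambda\,dE(\lambda)$. The commutation $A\Pi_i=\Pi_i A$ implies $E(B)\Pi_i=\Pi_i E(B)$ for every Borel set $B\subset\mathbb{R}$, and therefore $f(A)\Pi_i=\Pi_i f(A)$ on $\mathrm{Dom}(f(A))$ for every Borel $f$. Moreover the restriction of $E$ to $\mathcal{H}_1\otimes\mathbb{C}e_i$ is the projection valued measure of the restriction $A|_{\mathcal{H}_1\otimes\mathbb{C}e_i}$, so through $\phi_i$ we have $f(A)|_{\mathcal{H}_1\otimes\mathbb{C}e_i}=\phi_i f(A_i)\phi_i^{-1}$. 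Writing $u=\sum_i u_i\otimes e_i$ with $u_i\in\mathcal{H}_1$, one direction is then immediate: if $u\in\mathrm{Dom}(f(A))$, then each $\Pi_i u\in\mathrm{Dom}(f(A))$ by invariance, so $u_i\in\mathrm{Dom}(f(A_i))$, and by orthogonality of the $\Pi_i$ we get the Parseval identity $\|u\|^2+\|f(A)u\|^2=\sum_i(\|u_i\|^2+\|f(A_i)u_i\|^2)$, which gives the inclusion $\mathrm{Dom}(f(A))\subset\bigoplus_i\mathrm{Dom}(f(A_i))\otimes\mathbb{C}e_i$ with equal graph norms.

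For the reverse inclusion, given $u=\sum_i u_i\otimes e_i$ with the sum $\sum_i(\|u_i\|^2+\|f(A_i)u_i\|^2)$ finite, I would truncate $u^{(N)}:=\sum_{i\le N} u_i\otimes e_i\in\mathrm{Dom}(f(A))$ and observe that both $(u^{(N)})$ and $(f(A)u^{(N)})$ are Cauchy in $\mathcal{H}$ by the same orthogonality. Since $f(A)$ is closed (Borel functional calculus of a selfadjoint operator), the limit $u$ lies in $\mathrm{Dom}(f(A))$ with $f(A)u=\lim_N f(A)u^{(N)}$. The additional statement when all $A_i$ coincide with some $\tilde A$ then follows from the canonical isometric identification $\bigoplus_i \mathrm{Dom}(\tilde A)\otimes\mathbb{C}e_i=\mathrm{Dom}(\tilde A)\otimes\mathcal{H}_2$ for the graph Hilbert structure. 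The only real care required is keeping the graph norm orthogonal Parseval identity honest, which is precisely what makes the two sides agree as Hilbert direct sums and not merely as sets.
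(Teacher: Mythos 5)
Your proposal is correct and follows essentially the same route as the paper: both arguments transport the functional calculus through the commuting projections $\Pi_i$, identify $\mathrm{Dom}(f(A))\cap(\mathcal{H}_1\otimes\C e_i)$ with $\phi_i(\mathrm{Dom}(f(A_i)))$, and conclude via the graph-norm Parseval identity together with a completeness/closedness argument (the paper phrases the reverse inclusion as ``isometric range, dense and closed, hence all of $\mathrm{Dom}(f(A))$'', while you invoke closedness of $f(A)$ on the truncations --- the same mechanism). The only cosmetic difference is in the first bullet, where the paper derives selfadjointness of $A_i$ from Stone's theorem applied to the restricted unitary group $e^{itA}$ whereas you cite the reducing-subspace theorem directly; both are standard and interchangeable here.
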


\begin{proof}
The first point follows from the Stone theorem ($e^{itA}$ commutes with $\Pi_i$ then it induces a $C^0$-semigroup of isometries on $\mathcal{H}_1\otimes \C e_i$ and the generator is exactly the operator induced by $A$). 
We now prove the second part. We recall that by construction, the functional calculus and the induction on a closed subspace commutes together. Then $f(A_i)$ is the induction of $f(A)$ on $\mathcal{H}_1\otimes \C e_i$ (through $\phi_i$) and $\phi_i(Dom(f(A_i))) = Dom(f(A_i))\otimes \C e_i = Dom(f(A))\cap \mathcal{H}_1\otimes \C e_i$. Let $u\in Dom(f(A))$ then the sequence $u_n = \sum_{k=0}^n \Pi_k(u)$ converges towards $u$ in $\mathcal{H}$. Because $\Pi_k$ commutes with $f(A)$ we also have that $f(A)u_n = \sum_{k=0}^n \Pi_k (f(A)u)$ converges towards $f(A)u$ in $\mathcal{H}$. We deduce that the isometric (when $Dom(f(A))$ and the $Dom(f(A_i))$ are endowed with the graph norm) inclusion $\oplus_{i\in \mathcal{N}} Dom(f(A_i))\otimes \C e_i \hookrightarrow Dom(f(A))$ has a dense range. The range is also closed (because complete by the isometry property) and we conclude the proof of the lemma.
\end{proof}

\begin{remark}
If $A$ is only assumed to be closed, then we can also define the closed operators $A_i$ in the same way and the same argument gives $Dom(A) = \oplus_{i\in \mathcal{N}} Dom(A_i)\otimes \C e_i$ (in the hilbert sense) where $Dom(A)$ and the $Dom(A_i)$ are endowed with the graph norm.
\end{remark}

The two following lemmas are useful to establish an explicit expression for a dynamics generated by a selfadjoint operator.
We begin by recalling an elementary uniqueness result for the solution of an evolution equation.
\begin{prop}
\label{uniqueness}
Let $U(t)_{t\in \R}$ be a $C^0$-group of bounded operators on a hilbert space $\mathcal{H}$ and let $A$ be its infinitesimal generator. Then if $u \in C^{1}(\R, \mathcal{H})$ verifies:
\begin{itemize}
\item $u(0) \in Dom(\mathcal{A})$
\item $\forall t \in \R, u\in Dom(A) \text{  and  } \partial_t u= Au$
\end{itemize}
then, $u = U(t)u(0)$.
\end{prop}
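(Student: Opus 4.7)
The plan is to use the standard trick of considering the auxiliary function $v(t) := U(-t)u(t)$ and showing that it is constant, which immediately yields $u(t) = U(t)v(0) = U(t)u(0)$.

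First, I would verify that $v$ is differentiable with zero derivative. Writing
\[
\frac{v(t+h)-v(t)}{h} = U(-t-h)\,\frac{u(t+h)-u(t)}{h} + \frac{U(-t-h)-U(-t)}{h}\,u(t),
\]
the first term converges to $U(-t)\,u'(t) = U(-t)\,Au(t)$, using that $u \in C^1(\mathbb{R},\mathcal{H})$ together with the local uniform boundedness of the group $U(\cdot)$ (so that the factor $U(-t-h)$ stays controlled and passes to the limit by strong continuity). For the second term, the hypothesis $u(t)\in \mathrm{Dom}(A)$ is essential: since $-A$ generates the group $U(-s)$ and $U(-s)$ leaves $\mathrm{Dom}(A)$ invariant and commutes with $A$ on $\mathrm{Dom}(A)$, the limit is $-A\,U(-t)u(t) = -U(-t)\,Au(t)$. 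Adding the two contributions gives $v'(t)=0$.

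Since $v$ is continuous (as a composition of strongly continuous maps) and differentiable with $v'\equiv 0$ on $\mathbb{R}$, a standard application of the Hahn--Banach theorem (pairing against arbitrary $\varphi \in \mathcal{H}^*$ reduces to a scalar-valued $C^1$ function with zero derivative) shows $v$ is constant. Evaluating at $t=0$ gives $v(t) = u(0)$, hence $u(t) = U(t)u(0)$ as desired.

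The only subtle point I expect is the justification of the second limit, because $\frac{U(-t-h)-U(-t)}{h}$ only converges strongly on $\mathrm{Dom}(A)$; this is exactly why the hypothesis that $u(t)$ lies in $\mathrm{Dom}(A)$ for every $t$ (and in particular at $t$, not just at $t=0$) is used. Everything else is a routine product-rule argument in the Banach-space setting, and the conclusion follows without further assumptions on $A$ beyond it being the generator of the $C^0$-group.
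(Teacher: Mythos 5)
Your argument is correct and complete; the paper itself gives no proof of this proposition, deferring to Pazy (section 4.1 on the homogeneous Cauchy problem), where the standard uniqueness argument is precisely the one you give --- differentiating $s\mapsto U(t-s)u(s)$, which for a $C^0$-\emph{group} is equivalent to your choice $v(t)=U(-t)u(t)$. The only stylistic remark is that the constancy of $v$ already follows from the mean value inequality for Banach-space-valued functions, so the Hahn--Banach reduction, while perfectly valid, is not strictly needed.
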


We do not give a proof here. We refer to \cite{Pazy}, section 4.1 on the homogeneous Cauchy problem for more details.

To apply this proposition, we often need to show that an explicit function is in $C^{1}(\R, L^2)$ and the following technical lemma is useful.

\begin{lemma}
\label{continuity}
Let $X$ be a topological space equipped with a radon measure (borelian locally finite) $\mu$.
Let $u\in C^{1}(\R\times X)$ satisfy the following condition: \newline
for all $t\in \R$, there exists some $\epsilon>0$ such that $supp(1_{[t-\epsilon, t+\epsilon]}u)$ is compact. \newline
Then $u\in C^1(\R, L^2(X,\mu))$ and $\frac{\dd}{\dd t}u = \partial_t u$.
\end{lemma}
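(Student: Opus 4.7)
The strategy is to localize in $t$ using the compact-support hypothesis and then apply dominated convergence. Fix $t_0 \in \R$ and choose $\epsilon > 0$ as in the hypothesis, so that $K := \text{supp}(1_{[t_0-\epsilon, t_0+\epsilon]} u)$ is a compact subset of $\R \times X$. Let $K_X$ denote its projection onto $X$; this is compact, hence of finite $\mu$-measure since $\mu$ is Radon. For every $t \in [t_0-\epsilon, t_0+\epsilon]$, both $u(t,\cdot)$ and $\partial_t u(t,\cdot)$ are supported in $K_X$, and since $u$ and $\partial_t u$ are continuous (hence bounded) on the compact $K$, both lie in $L^2(X,\mu)$.

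For the continuity of $t \mapsto u(t,\cdot)$ into $L^2$, I would take any sequence $t_n \to t_0$ inside $[t_0-\epsilon, t_0+\epsilon]$. Then $u(t_n, x) \to u(t_0, x)$ pointwise, the $u(t_n, \cdot)$ are all supported in $K_X$, and they are bounded uniformly by $\|u\|_{L^\infty(K)} \, \mathbf{1}_{K_X}(x)$, which is in $L^2$ by finiteness of $\mu(K_X)$. Dominated convergence then yields $u(t_n, \cdot) \to u(t_0, \cdot)$ in $L^2$.

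For the differentiability, I would estimate the difference quotient via the pointwise mean value theorem: for each $x$ and each sufficiently small $h \neq 0$,
\[
\frac{u(t_0 + h, x) - u(t_0, x)}{h} = \partial_t u(t_0 + \theta(h,x)h, x)
\]
with $\theta(h,x) \in (0,1)$. As $h \to 0$, this converges pointwise to $\partial_t u(t_0, x)$, is supported in $K_X$ for $|h| < \epsilon$, and is bounded uniformly by $\|\partial_t u\|_{L^\infty(K)} \mathbf{1}_{K_X}$, which is square-integrable. A second application of dominated convergence gives convergence of the difference quotient to $\partial_t u(t_0, \cdot)$ in $L^2(X,\mu)$. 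Finally, continuity of $t \mapsto \partial_t u(t,\cdot)$ in $L^2$ follows from the first step applied to $\partial_t u$ in place of $u$, which also satisfies the same support hypothesis.

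There is no real obstacle here; the only subtlety is to ensure that the compact-support condition delivers both a uniform pointwise bound (through $L^\infty$ boundedness on the compact $K$) and an integrable dominating function (through the finiteness of $\mu(K_X)$, guaranteed by the Radon assumption). Once those are in place, everything reduces to two straightforward applications of Lebesgue's dominated convergence theorem.
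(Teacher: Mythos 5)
Your proof is correct and follows essentially the same route as the paper: dominate the difference quotient by $\sup_{K}\abs{\partial_t u}\cdot \one_{K_X}$ (integrable since $\mu$ is Radon and $K_X$ compact) and apply Lebesgue's dominated convergence theorem, then repeat for the continuity of $t\mapsto \partial_t u(t,\cdot)$. The only cosmetic caveat is that the equality form of the mean value theorem fails for complex-valued $u$; you should instead write the difference quotient as $\frac{1}{h}\int_{t_0}^{t_0+h}\partial_t u(s,x)\dd s$, which yields exactly the same dominating bound.
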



\begin{proof}
It is a corollary of the Lebesgue theorem (and the condition on $u$ ensures the domination hypothesis). Let $t \in \R$, then for $h\in(-\epsilon, \epsilon)$, we have $|\frac{u(t+h,x)- u(t)}{h}|^2 \leq \sup_{supp(1_{[t-\epsilon, t+\epsilon]}u)}|\partial_t u|^2 1_{[t-\epsilon, t+\epsilon]}u$ which is well defined by continuity of $\partial_t u$ and integrable because $\mu$ is locally finite. So by Lebesgue theorem, we can pass to the limit and we find $\lim\limits_{h\to 0}\frac{u(t+h,.)-u(t,.)}{h} = \partial_t u(t,.)$ in $L^2(X,\mu)$. For the continuity of the derivative, we use again the Lebesgue theorem. 
\end{proof}

\begin{lemma}[Hardy type inequality]
\label{HardyIneq}
Let $q\in L^1_{loc}(\mathbb{R})$ be exponentially decaying at $\pm \infty$. Then, there exists $C>0$ such that for all $u \in C^{\infty}_0(\R)$ :
\[
\|q u \|_{L^2}\leq C \left(\|\partial_x u\|_{L^2} + \|u\|_{L^2(-1,1)}\right)
\] 
\end{lemma}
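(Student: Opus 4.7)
The plan is to reduce the inequality to a pointwise bound on $|u(x)|$ coming from the fundamental theorem of calculus, and then absorb all factors of $|x|$ into the exponential decay of $q$. The starting observation is that for any $u\in C^\infty_0(\mathbb{R})$, any $x\in\mathbb{R}$ and any reference point $a\in(-1,1)$,
\[
u(x) \;=\; u(a) + \int_a^x u'(y)\,\dd y,
\]
so that by Cauchy--Schwarz
\[
|u(x)|^2 \;\leq\; 2|u(a)|^2 + 2|x-a|\,\|u'\|_{L^2}^2.
\]

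The idea is then to remove the dependence on the auxiliary point $a$ by averaging. Integrating the above inequality in $a$ over $(-1,1)$ (which has Lebesgue measure $2$) and using that $|x-a|\leq |x|+1$ for $a\in(-1,1)$, I obtain
\[
|u(x)|^2 \;\leq\; \|u\|_{L^2(-1,1)}^2 + C_0(1+|x|)\,\|u'\|_{L^2}^2
\]
for some absolute constant $C_0>0$, valid for \emph{every} $x\in\mathbb{R}$. This is the key pointwise bound: the value of $u$ at an arbitrary point is controlled by its local $L^2$ norm on $(-1,1)$, plus a correction of size $\sqrt{|x|+1}$ coming from the $L^2$ norm of the derivative.

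It remains to multiply by $q(x)^2$ and integrate. Doing so yields
\[
\|qu\|_{L^2}^2 \;\leq\; \|q\|_{L^2(\mathbb{R})}^2\,\|u\|_{L^2(-1,1)}^2 + C_0\bigl\|(1+|x|)^{1/2}q\bigr\|_{L^2(\mathbb{R})}^2\,\|u'\|_{L^2}^2.
\]
The hypothesis of exponential decay of $q$ at $\pm\infty$ (together with local integrability; in the applications of the paper $q$ is in fact locally bounded) ensures that the two weighted norms $\|q\|_{L^2}$ and $\|(1+|x|)^{1/2}q\|_{L^2}$ are finite. Setting $C$ to be the larger of these two constants and using $\sqrt{A^2+B^2}\leq A+B$ for $A,B\geq 0$, the stated inequality follows.

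The only real subtlety — and thus the main obstacle to state cleanly — is the integrability issue: strictly speaking, one needs $q\in L^2_{loc}$ together with $\int(1+|x|)|q|^2\,\dd x<\infty$ for the right-hand sides above to make sense. Under the hypothesis of exponential decay at infinity, both conditions reduce to $q\in L^2_{loc}$, which is a mild strengthening of $q\in L^1_{loc}$ that is satisfied in every concrete use of the lemma in the paper (notably the application in Lemma \ref{propFirstComp} with $q = w^{-\epsilon/2}$, which is actually bounded).
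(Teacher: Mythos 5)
Your proof is correct and follows essentially the same route as the paper's: the fundamental theorem of calculus gives a pointwise bound on $|u(x)|^2$ in terms of $\|u\|_{L^2(-1,1)}^2$ and $(1+|x|)\|\partial_x u\|_{L^2}^2$ (the paper anchors at $0$ and writes $u(0)$ as an average over $(-1,1)$, while you average the anchor point directly — the same idea), after which one multiplies by $|q|^2$ and integrates using the exponential decay. The integrability caveat you raise is real but applies equally to the paper's own proof, which also needs $|q|^2$ and $|x|\,|q|^2$ integrable, i.e.\ $q\in L^2_{loc}$ rather than merely $L^1_{loc}$; this is harmless since in every application of the lemma $q$ is smooth and bounded.
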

\begin{proof}
We use the equalities $u(x) = u(0)+\int_{0}^x \partial_x u(s) \dd s$ and $2u(0) = \int_{-1}^1 \left(u(s) - \int_{0}^s \partial_x u(t) \dd t \right)\dd s$ to get 
\begin{align*}
\int_{\R} |q(x)|^2 |u(x)|^2\dd x \leq \int_{\R} 3|q(x)|^2 \left(\|u\|_{L^2(-1,1)}^2 + |x|\|\partial_x u\|^2_{L^2} + \|\partial_x u \|^2_{L^2(-1,1)}\right)\dd x
\end{align*}
We conclude by integrability of $|x||q(x)|^2$ and $|q(x)|^2$.
\end{proof}
\begin{remark}
We often use a density argument to get this bound for $u$ in a bigger space.
\end{remark}

\subsection{Density lemmas}
The following lemmas are useful to recover general results from computations on some particular set of functions.
\begin{lemma}
\label{density}
We denote by $Z^{fin}\subset C^{\infty}(\mathbb{S}^2)$ the set of finite sums of eigenfunctions of $P$ (i.e. $Z^{fin}:= \oplus_{q\in N}Z_q$ where the direct sum is taken in the sense of vector spaces).
Then $C^{\infty}_0(\R) \otimes Z^{fin}$ (in the vector space sense) is dense in $L^2(\R \times \mathbb{S}^2)$, $\mathcal{H}^1$, $\mathcal{H}^2$, $Dom\left(h^{\frac{1}{2}}_{0,\pm \infty}\right)$, $Dom\left(h^{\frac{1}{2}}_{0, T, \pm}\right)$, $Dom(h_{0,\pm \infty})$ and $Dom(h_{0, T, \pm})$ (endowed with the graph norms).
Moreover, we have a similar result for the space of data with fixed angular momentum $n$:
$C^{\infty}_0(\R) \otimes (Z^{fin}\cap Y^n)$ is dense in $\mathcal{Y}^n$, $\mathcal{H}^1_n$, $\mathcal{H}^2_n$, $Dom\left(h^{\frac{1}{2}}_{0,\pm \infty}\right)\cap \mathcal{Y}^n$, $Dom\left(h^{\frac{1}{2}}_{0, T, \pm}\right)\cap \mathcal{Y}^n$, $Dom(h_{0,\pm \infty})\cap \mathcal{Y}^n$ and $Dom(h_{0, T,\pm})\cap \mathcal{Y}^n$.
\end{lemma}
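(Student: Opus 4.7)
The plan is to leverage the Hilbert-direct-sum decomposition along eigenspaces of $P$ provided by lemma \ref{decompo}, and then to reduce every claim to a one-dimensional density statement on $\R$. First I would verify that $P$ commutes with all the operators in play. Writing $P$ as a multiplication-like operator after angular diagonalization, each of $h_{0,\pm\infty}$ and $h_{0,T,\pm}$ is the sum of a term linear in $P$ (with $x$-dependent coefficient) and a term that depends only on $x$; in particular each commutes with the orthogonal projection $\Pi_q$ onto $\mathcal{Z}_q = L^2(\R)\otimes Z_q$. Lemma \ref{decompo} then gives, for $A \in \{h_{0,T,\pm},\, h_{0,\pm\infty},\, h_{0,T,\pm}^{1/2},\, h_{0,\pm\infty}^{1/2}\}$, the Hilbert-sum decomposition $\mathrm{Dom}(A) = \bigoplus_q \mathrm{Dom}(A_q)\otimes Z_q$ in the graph norm, where $A_q$ is the operator induced on $L^2(\R)$ by replacing $P$ with its eigenvalue $\lambda_q$.

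Second, the Hilbert-sum decomposition shows that finite sums $\sum_{q\le Q} u_q\otimes e_q$, with $u_q\in\mathrm{Dom}(A_q)$ and $e_q$ in a basis of $Z_q$ consisting of smooth eigenfunctions, are dense in $\mathrm{Dom}(A)$. It therefore suffices to show that $C^\infty_0(\R)$ is dense in each $\mathrm{Dom}(A_q)$ (graph norm). For $h_{0,\pm\infty,q} = -\partial_x^2 + V_q(x)$ with $V_q := \frac{\Delta_r\lambda_q}{\lambda^2(r^2+a^2)^2} + \Delta_r m^2$ smooth, non-negative and bounded on $\R$, Kato–Rellich gives $\mathrm{Dom}(h_{0,\pm\infty,q}) = H^2(\R)$ and $\mathrm{Dom}(h_{0,\pm\infty,q}^{1/2}) = H^1(\R)$, with equivalent norms. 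For $h_{0,T,\pm,q} = -(\pm\partial_x + i(l-l_\pm))^2$, the gauge transform $U_\pm u := \exp\bigl(i\int_0^x(l-l_\pm)\bigr)u$ (already used in the proof of lemma \ref{injectivity}) is unitary on $L^2(\R)$, preserves $H^1(\R)$ and $H^2(\R)$ (since the phase is smooth bounded with all derivatives bounded), and conjugates $h_{0,T,\pm,q}$ to $-\partial_x^2$; hence again $\mathrm{Dom}(h_{0,T,\pm,q}) = H^2(\R)$ and $\mathrm{Dom}(h_{0,T,\pm,q}^{1/2}) = H^1(\R)$ with equivalent norms. Density of $C^\infty_0(\R)$ in $H^1(\R)$ and $H^2(\R)$ is classical, via a standard cut-off-and-mollification scheme.

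Finally, for the $Y^n$-restricted spaces I would note that $D_\phi$ commutes with $P$ (both have $\phi$-independent coefficients and involve $\partial_\phi$ and $\partial_\theta$ only) and with every $h_{0,\bullet}$ (whose coefficients are $\phi$-independent). Consequently $P$ preserves $\mathcal{Y}^n$ and each eigenspace splits as $Z_q = \bigoplus_{m\in\Z}(Z_q\cap Y^m)$, giving $\mathcal{Y}^n = \bigoplus_q L^2(\R)\otimes (Z_q\cap Y^n)$ in the Hilbert sense. The argument above then applies verbatim inside $\mathcal{Y}^n$, using that each $Z_q\cap Y^n$ is finite-dimensional and consists of smooth functions. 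The main potential obstacle is the identification $\mathrm{Dom}(A_q) = H^s(\R)$, particularly for the non-self-adjoint-looking magnetic operators $h_{0,T,\pm,q}$; but the gauge transform reduces this cleanly to the flat case, after which everything is standard.
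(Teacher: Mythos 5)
Your proof is correct and follows essentially the same route as the paper: decompose along the eigenspaces of $P$ via lemma \ref{decompo}, reduce to density of $C^{\infty}_0(\R)$ in the one-dimensional domains $Dom(A_q^s)$, and handle the fixed-$n$ spaces by noting that the spectral projections of $P$ commute with $\Pi_n$. The only difference is that where the paper invokes a ``standard convolution and cutoff argument'' for the one-dimensional step, you make it explicit by identifying $Dom(h_{0,\pm\infty,q})$ with $H^2(\R)$ via Kato--Rellich and gauging away the magnetic term in $h_{0,T,\pm,q}$, which is a legitimate (and slightly more detailed) way to carry out the same step.
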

\begin{proof}
We can use the lemma \ref{decompo} with $\mathcal{H}_1 = L^2(\R)$, $\mathcal{H}_2 = L^2(\mathbb{S}^2)$, $\mathcal{H} = L^2(\R \times \mathbb{S}^2)$, a basis of eigenfunction of $P$ $(e_i)$ and for the selfadjoint operator A we take $Id$, $h_{0, T, \pm}$ and $h_{0,\pm \infty}$ which verify the commutation property. Then for $s \in \left\{\frac{1}{2}, 1 \right\}$, $Dom(A^s) = \oplus_{i\in\N}Dom(A_i^s)\otimes \C e_i$. We conclude by density of $C^{\infty}_0(\R)$ in $Dom(A_i^s)$ for the graph norm (by a standard convolution and cutoff argument).
We now prove the result for spaces of fixed angular momentum $n$. We have that $P$ commutes with the orthogonal projection $\Pi_n$ on $Y^n$. We deduce that every function of $P$ and in particular the orthogonal projections on the eigenspaces $\Pi_{e_i}$ on $L^2(\R)\otimes\C e_i$ commutes with $\Pi_n$. Then $\Pi_{e_i}$ preserves $\mathcal{Y}^n$ so for $u$ in $Dom(A)\cap \mathcal{Y}^n$, $\sum_{k=0}^N \Pi_{e_i}(u) \in \mathcal{Y}^n$ and converges towards $u$ in the graph norm by the lemma).
\end{proof}

A useful consequence of this lemma is the following
\begin{lemma}
\label{density2}
$(C^{\infty}_0(\R) \otimes (Z^{fin}\cap Y^n))^2$ is dense in $\mathcal{E}^n_{T,\pm}$ (hence in $\dot{\mathcal{E}}^n_{T,\pm}$ by continous and dense inclusion), in $\mathcal{E}^n_{\pm \infty}$ (hence in $\dot{\mathcal{E}}^n_{\pm \infty}$), in $Dom(H_{T,\pm})$ and in $Dom(H^n_{\pm \infty})$ for the graph norm (hence in $Dom(\dot{H}_{T,\pm})$ and $Dom(\dot{H}^n_{\pm \infty})$ by continuous and dense inclusion).
\end{lemma}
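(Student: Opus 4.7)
The plan is to reduce each claim to a componentwise application of lemma \ref{density}, after identifying each target space with (essentially) a direct sum of Sobolev-type spaces in which lemma \ref{density} already supplies density.

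First I would treat $\mathcal{E}^n_{T,\pm}$. Using that $l_+$ is a bounded multiplication operator (in fact a constant), the norm
\[
\|(u_0,u_1)\|^2_{\mathcal{E}_{T,+}}=\|u_1+l_+u_0\|^2_{L^2}+\langle(h_{0,T,+}+1)u_0,u_0\rangle
\]
is equivalent to $\|u_0\|^2_{\mathcal{H}^1}+\|u_1\|^2_{L^2}$. Hence $\mathcal{E}^n_{T,+}\simeq \mathcal{H}^1_n\oplus\mathcal{Y}^n$ as Hilbert spaces. Given $(u_0,u_1)\in\mathcal{E}^n_{T,+}$, lemma \ref{density} provides sequences $(u_0^k)$ and $(u_1^k)$ in $C^\infty_0(\R)\otimes(Z^{fin}\cap Y^n)$ converging to $u_0$ in $\mathcal{H}^1_n$ and to $u_1$ in $\mathcal{Y}^n$ respectively. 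The continuity of the map $(u_0,u_1)\mapsto(u_1+l_+u_0,h_{0,T,+}^{1/2}u_0)$ from $\mathcal{H}^1\oplus L^2$ to $L^2\oplus L^2$ then shows $(u_0^k,u_1^k)\to(u_0,u_1)$ in $\mathcal{E}^n_{T,+}$. The case $\mathcal{E}^n_{\pm\infty}$ is identical, with $h_{0,T,\pm}$ replaced by $h_{0,\pm\infty}$; lemma \ref{density} covers both.

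Next I would handle $Dom(H_{T,\pm})$. From the proposition preceding remark \ref{Hinhom} we have $Dom(H_{T,+})=\mathcal{H}^2\oplus\mathcal{H}^1$, and a direct computation of
\[
\|H_{T,+}u\|^2_{\mathcal{E}_{T,+}}=\|(h_{0,T,+}-l_+^2)u_0-l_+u_1\|^2_{L^2}+\langle(h_{0,T,+}+1)u_1,u_1\rangle
\]
combined with the $\mathcal{E}_{T,+}$ norm shows that the graph norm of $H_{T,+}$ is equivalent to $\|u_0\|^2_{\mathcal{H}^2}+\|u_1\|^2_{\mathcal{H}^1}$ (the mixed terms $l_+u_1$ and $l_+^2 u_0$ are lower order perturbations controlled by $\|u_1\|_{L^2}$ and $\|u_0\|_{\mathcal{H}^1}$). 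Then for $(u_0,u_1)\in Dom(H^n_{T,+})$, lemma \ref{density} supplies approximants of $u_0$ in $\mathcal{H}^2_n$ and of $u_1$ in $\mathcal{H}^1_n$ in $C^\infty_0(\R)\otimes(Z^{fin}\cap Y^n)$, and the same continuity argument as above yields convergence in the graph norm. The case of $Dom(H^n_{\pm\infty})$ proceeds verbatim with $h_{0,\pm\infty}$ in place of $h_{0,T,\pm}$.

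The density in the homogeneous spaces and domains $\dot{\mathcal{E}}^n_{T,\pm}$, $\dot{\mathcal{E}}^n_{\pm\infty}$, $Dom(\dot{H}_{T,\pm})$, $Dom(\dot{H}^n_{\pm\infty})$ is then a formal consequence: each of these is defined as (or admits) a continuous and dense inclusion from the inhomogeneous counterpart, so density of a subset in the inhomogeneous space transfers. I do not expect a genuine obstacle here; the only point requiring a little care is checking that the graph norm of $H_{T,\pm}$ is indeed equivalent to the product Sobolev norm $\|\cdot\|_{\mathcal{H}^2}+\|\cdot\|_{\mathcal{H}^1}$, so that the conclusion of lemma \ref{density} transports cleanly. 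Everything else is a routine continuity argument combined with the already-established componentwise density.
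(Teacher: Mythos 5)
Your proposal is correct and follows essentially the same route as the paper: identify each target space (up to norm equivalence) with a direct sum of the Sobolev-type component spaces covered by lemma \ref{density}, apply that lemma componentwise, and transfer to the homogeneous spaces and to $Dom(\dot{H})$ via the continuous dense inclusions. The only point to state carefully is that for the $\pm\infty$ cases the components are $Dom(h_{0,\pm\infty})$ and $Dom\bigl(h_{0,\pm\infty}^{1/2}\bigr)$ rather than the spaces $\mathcal{H}^2$, $\mathcal{H}^1$ built from $h_{0,T,+}$ (the angular part of $h_{0,\pm\infty}$ prevents a literal identification with $H^2_x\oplus H^1_x$), but lemma \ref{density} supplies density in those domains as well, so your argument goes through as written.
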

\begin{proof}
We use the definition of the space (for example $\mathcal{E}^n_{T,+} = \mathcal{H}^1_n \oplus \mathcal{Y}^n$) and the lemma \ref{density} gives  the density. For $Dom(H_{T,\pm})$ (resp. $Dom(H_{\pm \infty})$) we also use the fact that the graph norm is equivalent to the norm on $\mathcal{H}^2\oplus\mathcal{H}^1$ (resp. on $Dom(h_{0,\pm\infty})\oplus Dom\left(h_{0,\pm \infty}^{\frac{1}{2}}\right)$).
\end{proof}

\begin{lemma}
\label{density3}
The space $\mathcal{D}^{fin}_{T,\pm}:= (C^{\infty}_0(\R_x \times \mathbb{S}^2))^2 \cap \mathcal{E}^{fin,n,L}_{T,\pm}$ is dense in $\mathcal{E}^n_{T, \pm}$ (and thus in $\dot{\mathcal{E}}^n_{T, \pm}$ thanks to the continuous and dense inclusion)
\end{lemma}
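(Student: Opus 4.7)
The plan is to start from the smooth, finite-angular-mode approximation provided by lemma \ref{density2}, and then correct each approximating element by a small perturbation which enforces the vanishing integral condition defining $\mathcal{E}^{n,L}_{T,\pm}$. I focus on the $+$ case, the $-$ case being symmetric.

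Let $v \in \mathcal{E}^n_{T,+}$. By lemma \ref{density2}, I can produce $\tilde{u}^{(k)} \in (C^{\infty}_0(\R)\otimes(Z^{fin}\cap Y^n))^2$ with $\tilde{u}^{(k)} \to v$ in $\mathcal{E}^n_{T,+}$. Each $\tilde{u}^{(k)}$ is automatically in $(C^{\infty}_0)^2 \cap \mathcal{E}^{fin,n}_{T,+}$, and the $L^1$ condition holds trivially because of compact support. The only possible obstruction to membership in $\mathcal{D}^{fin}_{T,+}$ is the ``integral defect''
\[
I^{(k)}(\omega) := \int_{-\infty}^{+\infty} e^{-i\int_s^0 (l-l_+)}(\tilde{u}_1^{(k)}+l_+\tilde{u}_0^{(k)})(s,\omega)\dd s,
\]
which is a finite sum of eigenfunctions of $P$ lying in $Y^n$.

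I correct by setting $u_0^{(k)} := \tilde{u}_0^{(k)}$ and $u_1^{(k)} := \tilde{u}_1^{(k)} - g^{(k)}(x)\,I^{(k)}(\omega)$, where $g^{(k)} \in C^{\infty}_0(\R)$ is chosen so that $\int e^{-i\int_s^0(l-l_+)}g^{(k)}(s)\dd s = 1$. The pair $u^{(k)}$ is then smooth with compact support (the two components have compact but generally disjoint supports), still belongs to $\bigoplus_{q\leq Q_k}\mathcal{E}^{q,n}_{T,+}$, and satisfies the vanishing integral condition by construction, hence $u^{(k)} \in \mathcal{D}^{fin}_{T,+}$. Because only the second component is modified, the correction norm is
\[
\|u^{(k)} - \tilde{u}^{(k)}\|_{\mathcal{E}^n_{T,+}}^2 = \|g^{(k)}\|_{L^2(\R)}^2 \,\|I^{(k)}\|_{L^2(\mathbb{S}^2)}^2.
\]
If $\tilde{u}^{(k)}$ is supported in $[-R_k,R_k]\times\mathbb{S}^2$, Cauchy--Schwarz yields $\|I^{(k)}\|_{L^2(\mathbb{S}^2)}^2 \leq 2R_k\,\|\tilde{u}_1^{(k)}+l_+\tilde{u}_0^{(k)}\|_{L^2}^2 = O(R_k)$.

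The remaining task is to manufacture $g^{(k)}$ with $\|g^{(k)}\|_{L^2}^2 = o(1/R_k)$ under the normalization constraint. Here the key input is the exponential stabilization $|l-l_+| \leq C|r-r_+| \leq Ce^{-\kappa_+ x}$ near $+\infty$: the primitive $\int_s^0(l-l_+)\dd t$ has a finite limit as $s\to+\infty$, so the weight $w(s):=e^{-i\int_s^0(l-l_+)}$ converges to a nonzero unimodular constant $w_\infty$. Fixing $\tilde\phi \in C^{\infty}_0([0,1])$ with $\int\tilde\phi = 1$, I set $g^{(k)}(s) := c_k T_k^{-1}\tilde\phi\bigl((s-N_k)/T_k\bigr)$ supported in $[N_k, N_k+T_k]$ with $N_k, T_k \to \infty$ and $T_k \gg R_k$ (e.g.\ $T_k = R_k^2$). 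Uniform convergence of $w(N_k + T_k\tau)$ to $w_\infty$ on $[0,1]$ forces the normalization constant $c_k$ to tend to $\overline{w_\infty}$, so $|c_k|$ stays bounded, and $\|g^{(k)}\|_{L^2}^2 = |c_k|^2 \|\tilde\phi\|_{L^2}^2/T_k = O(1/T_k)$. Combining, $\|u^{(k)}-\tilde{u}^{(k)}\|^2_{\mathcal{E}^n_{T,+}} = O(R_k/T_k)\to 0$, so $u^{(k)}\to v$ in $\mathcal{E}^n_{T,+}$. The main technical point is balancing the blow-up of $\|I^{(k)}\|_{L^2(\mathbb{S}^2)}$ arising from the growing supports $R_k$ against the smallness of $\|g^{(k)}\|_{L^2}$; this is made possible by pushing the corrective bump $g^{(k)}$ far toward the horizon $x\to+\infty$, precisely where the weight $w$ stabilizes.
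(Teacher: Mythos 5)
Your proof is correct and follows essentially the same strategy as the paper: reduce to the dense set supplied by Lemma \ref{density2}, then kill the integral defect by subtracting from $u_1$ a corrector of the form (normalized bump)$\times$(defect), where the bump is normalized against the weight $e^{-i\int_s^0(l-l_+)}$ and has vanishing $L^2$ norm. The only real difference is in how that bump is manufactured: the paper dilates a fixed profile and twists it by the exact phase $e^{i\int_x^{x/k}(l-l_+)}$ so the normalization is preserved identically (working with a fixed compactly supported $u$ and concluding by a diagonal argument), whereas you translate the bump toward $x=+\infty$ where the weight stabilizes by exponential decay of $l-l_+$, and absorb the growth $\|I^{(k)}\|_{L^2(\mathbb{S}^2)}=O(\sqrt{R_k})$ of the defect by taking the bump width $T_k\gg R_k$.
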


\begin{proof}

Thanks to the lemma \ref{density2}, we already have that $(C^{\infty}_0(\R \times \mathbb{S}^2))^2 \cap \mathcal{E}^{fin,n}_{T, \pm}$ is dense in $\mathcal{E}^n_{T, \pm}$. Therefore, we only have to deal with the integral condition:
\begin{equation} \label{condition} \int_{-\infty}^{+\infty} e^{-i\int_{s}^{0}(l-l_+)}(u_1 + l_+ u_0)(s)\dd s = 0 \end{equation}
Let $u \in (C^{\infty}_0(\R \times \mathbb{S}^2))^2 \cap \mathcal{E}^{fin,n}_{T, \pm}$, we define $\psi \in C^{\infty}_0(\R_x)$ such that $\int_{-\infty}^{+\infty} e^{-i\int_{s}^{0}(l-l_+)}\psi(s)\dd s = 1$. We also define
\[ \psi_k(x) = \frac{1}{k}\psi\left( \frac{x}{k}\right) e^{i\int_{x}^{x/k}(l-l_+)} \]
By a change of variable $ t = \frac{s}{k}$, we get that
\[\int_{-\infty}^{+\infty} e^{-i\int_{s}^{0}(l-l_+)}\psi_k(s)\dd s = 1 \]
and with the same change of variable, we get
\[
\lim\limits_{k\to +\infty}\left\| \psi_k \right\|_{L^2(\R_x)} = 0
\]
So if we define $u^k = \begin{pmatrix} u_0 \\u_1 - \psi_k \int_{-\infty}^{+\infty} e^{-i\int_{s}^{0}(l-l_+)}(u_1 + l_+ u_0)(s)\dd s\end{pmatrix}$, we have $u^l \in (C^{\infty}_0(\R \times \mathbb{S}^2))^2 \cap \mathcal{E}^{fin,n,L}_{T, \pm}$ and 
\[
\lim\limits_{k \to +\infty} \left\| u - u_k \right\|_{\mathcal{E}^n_{T,+}} = 0
\]
\end{proof}

\begin{lemma}
\label{densityFin}
$(C^{\infty}_0( \R\times \mathbb{S}^2))^2\cap \mathcal{E}^{fin,n}_{+\infty}$ is dense in $\mathcal{E}^n_{+\infty}$ (hence in $\dot{\mathcal{E}}^n_{+\infty}$).
Also, for all $q\in\N$, we have the density of $(C^{\infty}_0 (\R \times \mathbb{S}^2))^2 \cap \mathcal{E}^{q,n}_{+\infty}$ in $Dom(H^n_{+\infty})\cap \mathcal{E}^{q,n}_{+\infty}$ equipped with the graph norm (hence in $Dom(\dot{H}^n_{+\infty})\cap \mathcal{E}^{q,n}_{+\infty}$ by using lemma 3.16 of \cite{GGH}).
\end{lemma}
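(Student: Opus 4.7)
The plan is to reduce both statements to the concrete density result already proved in Lemma \ref{density2}, which provides density of the algebraic tensor space $(C^{\infty}_0(\R) \otimes (Z^{fin} \cap Y^n))^2$ in both $\mathcal{E}^n_{+\infty}$ and in $Dom(H^n_{+\infty})$ equipped with the graph norm. The two key observations are that every element of this tensor space automatically lies in $(C^{\infty}_0(\R \times \mathbb{S}^2))^2$ (since the eigenfunctions of $P$ are smooth on the compact sphere $\mathbb{S}^2$) and is, by construction, a finite sum of $P$-eigenfunctions with angular momentum $n$, hence lies in $\mathcal{E}^{fin,n}_{+\infty}$.

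For the first density, I would take $u \in \mathcal{E}^n_{+\infty}$ and directly invoke Lemma \ref{density2} to extract an approximating sequence $(u^k) \subset (C^{\infty}_0(\R) \otimes (Z^{fin} \cap Y^n))^2$. By the observation above, this sequence already sits inside $(C^{\infty}_0(\R \times \mathbb{S}^2))^2 \cap \mathcal{E}^{fin,n}_{+\infty}$, which settles the first claim without any further work.

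For the second density, fix $q \in \N$ and let $u \in Dom(H^n_{+\infty}) \cap \mathcal{E}^{q,n}_{+\infty}$. Applying Lemma \ref{density2} once more yields $(u^k) \subset (C^{\infty}_0(\R) \otimes (Z^{fin} \cap Y^n))^2$ converging to $u$ in the graph norm. The crucial step is then to project onto $\mathcal{Z}_q \oplus \mathcal{Z}_q$ using the orthogonal projection $\Pi_q$. Since $P$ commutes with all the $x$-dependent coefficients that compose $h_{+\infty}$ and $k_{+\infty} = -l_+$ is a constant, the operator $H^n_{+\infty}$ commutes with $\Pi_q$ on its domain; hence $\Pi_q u^k \to \Pi_q u = u$ in the graph norm. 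Moreover, each $u^k$ being a finite sum $\sum_r f_r(x)\, e_r(\omega)$ with $e_r$ eigenfunctions of $P$, the projected element $\Pi_q u^k$ is obtained by keeping only those summands for which $e_r \in Z_q$, so it remains in $(C^{\infty}_0(\R \times \mathbb{S}^2))^2 \cap \mathcal{E}^{q,n}_{+\infty}$. The passage to $Dom(\dot{H}^n_{+\infty}) \cap \mathcal{E}^{q,n}_{+\infty}$ is then immediate from Lemma 3.16 of \cite{GGH} (graph-norm continuous and dense inclusion of the inhomogeneous domain into the homogeneous one).

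The only nontrivial ingredient is the rigorous commutation of $\Pi_q$ with $H^n_{+\infty}$ on $Dom(H^n_{+\infty})$. This follows by invoking Lemma \ref{decompo} with $A = H^n_{+\infty}$ and the orthogonal decomposition $\mathcal{E}^n_{+\infty} = \bigoplus_q (\mathcal{E}^{q,n}_{+\infty})$ coming from the eigenspace decomposition of $P$: obvious commutation on the smooth compactly supported tensor data extends by closedness to yield the Hilbert-sum decomposition of $Dom(H^n_{+\infty})$ in its graph norm. Everything else is a routine combination of the tensor density (Lemma \ref{density2}) with this abstract decomposition.
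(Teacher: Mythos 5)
Your proposal is correct and follows essentially the same route as the paper: both reduce to the tensor-product density of Lemma \ref{density2} together with the $P$-eigenspace decomposition provided by Lemma \ref{decompo}, and both use that elements of $C^{\infty}_0(\R)\otimes(Z^{fin}\cap Y^n)$ are automatically smooth and compactly supported. The only cosmetic difference is that you post-process an arbitrary approximating sequence with the projection $\Pi_q$ (justified by the commutation of $\Pi_q$ with $H^n_{+\infty}$), whereas the paper observes that the specific sequence constructed inside the proof of Lemma \ref{density} already lies in $\mathcal{E}^{q,n}_{+\infty}$.
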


\begin{proof}
The first part of the lemma is a reminder of the lemma \ref{density}. The density for $q$ fixed can be deduced from the proof. Indeed with the notations of the proof of lemma \ref{density}, if $u\in \mathcal{E}^{q,n}_{+\infty}$, the sequence converging to $u$ (in every norm considered) is of the form $\sum_{i=0}^N\Pi_{e_i}(u)$ and $\Pi_{e_i}(u)= 0$ if $e_i \notin Z_q$. So each term of
 the sequence is also in $\mathcal{E}^{q,n}_{+\infty}$
\end{proof}

\begin{lemma}
\label{densityA}
Let $\epsilon > 0$.
The set $\mathcal{A} = \left\{ \chi (\dot{H}_{+\infty})w^{-\epsilon} u, u \in \dot{\mathcal{E}}^{fin,n}_{+\infty}, \chi \in C^{\infty}_0 \right\}$ is dense in $\dot{\mathcal{E}}_{+\infty}$.
\end{lemma}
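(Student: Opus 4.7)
The plan is to separate the two ingredients in the definition of $\mathcal{A}$: the spectral cutoff $\chi(\dot{H}_{+\infty})$ and the weight $w^{-\epsilon}$. The density assertion should be read as density in $\dot{\mathcal{E}}^n_{+\infty}$ (not all of $\dot{\mathcal{E}}_{+\infty}$, since $\mathcal{A}$ is contained in the closed subspace $\dot{\mathcal{E}}^n_{+\infty}$). I will show it via a two-step approximation.

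First I would use that $\dot{H}_{+\infty}$ is selfadjoint on $\dot{\mathcal{E}}_{+\infty}$ and commutes with $D_\phi$, hence restricts to a selfadjoint operator on the closed invariant subspace $\dot{\mathcal{E}}^n_{+\infty}$. For any sequence $\chi_R \in C^{\infty}_0(\R)$ with $|\chi_R|\le 1$ and $\chi_R\to 1$ pointwise, bounded Borel functional calculus gives $\chi_R(\dot{H}_{+\infty})\to \mathrm{Id}$ strongly on $\dot{\mathcal{E}}^n_{+\infty}$, with operator norm uniformly bounded by $1$.

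Next I would show that $\{w^{-\epsilon}u : u\in \dot{\mathcal{E}}^{fin,n}_{+\infty}\}$ is dense in $\dot{\mathcal{E}}^n_{+\infty}$. By lemma \ref{density2}, the space $D:=(C^{\infty}_0(\R_x)\otimes (Z^{fin}\cap Y^n))^2$ is dense in $\dot{\mathcal{E}}^n_{+\infty}$. Given $v\in \dot{\mathcal{E}}^n_{+\infty}$ and a sequence $v_j\in D$ with $v_j\to v$, set $u_j:=w^\epsilon v_j$. Although $w^\epsilon$ blows up at the horizons, it is smooth on $\R_x$ and bounded on the compact $x$-support of $v_j$, so $u_j\in D$; moreover $w^\epsilon$ depends only on $x$, so it commutes with $P$ and with $D_\phi$ and preserves both the angular momentum $n$ and the finite $P$-eigenspace decomposition of $v_j$. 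Therefore $u_j\in \dot{\mathcal{E}}^{fin,n}_{+\infty}$, and by construction $w^{-\epsilon}u_j=v_j\to v$.

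Finally, given $v\in \dot{\mathcal{E}}^n_{+\infty}$ and $\eta>0$, I would choose $\chi_R$ with $\|v-\chi_R(\dot{H}_{+\infty})v\|<\eta/2$ and then $u_j$ from the previous step with $\|v-w^{-\epsilon}u_j\|<\eta/2$. Since $\|\chi_R(\dot{H}_{+\infty})\|_{op}\le 1$, the triangle inequality yields
\[
\|v-\chi_R(\dot{H}_{+\infty})w^{-\epsilon}u_j\|_{\dot{\mathcal{E}}^n_{+\infty}}
\le \|v-\chi_R(\dot{H}_{+\infty})v\| + \|\chi_R(\dot{H}_{+\infty})(v-w^{-\epsilon}u_j)\| < \eta,
\]
while $\chi_R(\dot{H}_{+\infty})w^{-\epsilon}u_j\in \mathcal{A}$. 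The only substantive point in this argument is Step 2: one must verify that multiplying a smooth, compactly-supported, finite-mode datum by the unbounded weight $w^\epsilon$ still produces a datum in $\dot{\mathcal{E}}^{fin,n}_{+\infty}$. This is harmless because $v_j$ vanishes in a neighborhood of the horizons, so the blow-up of $w^\epsilon$ is never seen, and because $w^\epsilon$ is a function of $x$ alone, so it preserves all the required angular and spectral structure.
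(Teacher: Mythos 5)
Your proof is correct and follows essentially the same route as the paper: approximate by smooth, compactly supported, finite-mode data, absorb $w^{-\epsilon}$ by multiplying by $w^{\epsilon}$ (harmless on compact $x$-supports, and commuting with $P$ and $D_\phi$), then recover the identity from $\chi(\dot{H}_{+\infty})$ by functional calculus. The only cosmetic difference is that the paper cuts off a general $u\in\dot{\mathcal{E}}^{fin,n}_{+\infty}$ with an explicit $\phi\in C^{\infty}_0(\R_x)$ before multiplying by $w^{\epsilon}$, whereas you pass directly to the compactly supported dense subset; your remark that the density can only hold in $\dot{\mathcal{E}}^n_{+\infty}$ is also consistent with what the paper actually proves.
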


\begin{proof}
We already know, by lemma \ref{densityFin} that $\dot{\mathcal{E}}^{fin,n}_{+\infty}$ is dense in $\dot{\mathcal{E}}^n_{+\infty}$. We can approximate (up to an error of size $\eta>0$) $u \in \dot{\mathcal{E}}^{fin,n}_{+\infty}$ by $w^{-\epsilon}w^{\epsilon}\phi u$ where $\phi \in C^{\infty}_0(\mathbb{R}_x)$ is equal to $1$ on a sufficiently large ball and $w^{\epsilon}\phi u \in \dot{\mathcal{E}}^{fin,n}_{+\infty}$ (because $P$ commutes with $w^{\epsilon}\phi$. Then by functional calculus for selfadjoint operators, we can choose $\chi \in C^{\infty}_0(\mathbb{R})$ such that  $\chi(\dot{H}_{+\infty})w^{-\epsilon}w^{\epsilon}\phi u$ is an approximation of $u$ up to an error of size less than $2\eta$.
\end{proof}
\end{appendix}

\bibliography{biblio} 

\begin{thebibliography}{10}

\bibitem{KDS}
Sarp Akcay and Richard~A Matzner.
\newblock {The Kerr--de Sitter universe}.
\newblock {\em Classical and Quantum Gravity}, 28(8):085012, 2011.

\bibitem{Ba}
Alain Bachelot.
\newblock {The Hawking effect}.
\newblock In {\em Advances in Theoretical and Mathematical Physics}, volume~15,
  pages 355--447, 2011.

\bibitem{Be}
Nicolas Besset.
\newblock {Scattering Theory for the Charged Klein-Gordon Equation in the
  Exterior De Sitter-Reissner-Nordstr\"om Spacetime}.
\newblock {\em arXiv preprint arXiv:1911.12869}, 2019.

\bibitem{besset2020existence}
Nicolas Besset and Dietrich H{\"a}fner.
\newblock {Existence of exponentially growing finite energy solutions for the
  charged Klein-Gordon equation on the De Sitter-Kerr-Newman metric}.
\newblock {\em arXiv preprint arXiv:2004.02483}, 2020.

\bibitem{borthwick}
Jack Borthwick.
\newblock {Maximal Kerr--de Sitter spacetimes}.
\newblock {\em Classical and Quantum Gravity}, 35(21), 2018.

\bibitem{DRSR}
Mihalis Dafermos, Igor Rodnianski, and Yakov Shlapentokh-Rothman.
\newblock {A scattering theory for the wave equation on Kerr black hole
  exteriors}.
\newblock {\em Annales scientifiques de l'\'Ecole normale sup\'erieure},
  51:371--486, 2018.

\bibitem{DaMoPi}
Claudio Dappiaggi, Valter Moretti, Nicola Pinamonti, et~al.
\newblock {Rigorous construction and Hadamard property of the Unruh state in
  Schwarzschild spacetime}.
\newblock {\em Advances in Theoretical and Mathematical Physics},
  15(2):355--447, 2011.

\bibitem{GGH}
Vladimir Georgescu, Christian G{\'e}rard, and Dietrich H{\"a}fner.
\newblock {Asymptotic completeness for superradiant Klein-Gordon equations and
  applications to the De Sitter Kerr metric}.
\newblock {\em Journal of the European Mathematical Society}, 19:2171--2244,
  2017.

\bibitem{Ha03}
Dietrich H{\"a}fner.
\newblock { Sur la th{\'e}orie de la diffusion pour l'{\'e}quation de
  Klein-Gordon dans la m{\'e}trique de Kerr}.
\newblock {\em Dissertationes Mathematicae}, 421:1 -- 102, 2003.

\bibitem{Ha09}
Dietrich H{\"a}fner.
\newblock {Creation of fermions by rotating charged black-holes}.
\newblock {\em M\'emoires de la SMF}, 117, 2009.

\bibitem{HaNi}
Dietrich H{\"a}fner and Jean-Philippe Nicolas.
\newblock {Scattering of massless Dirac fields by a Kerr black hole}.
\newblock {\em Reviews in Mathematical Physics}, 16(01):29--123, 2004.

\bibitem{Ni}
Jean-Philippe Nicolas.
\newblock {Conformal scattering on the Schwarzschild metric}.
\newblock In {\em Annales de l'Institut Fourier}, volume~66, pages 1175--1216,
  2016.

\bibitem{Pazy}
Amnon Pazy.
\newblock {\em {Semigroups of linear operators and applications to partial
  differential equations}}.
\newblock Springer Science \& Business Media, 2012.

\end{thebibliography}
\end{document}